\theoremstyle{plain}
\newtheorem{theorem}{Theorem}[section]
\newtheorem{lemma}[theorem]{Lemma}
\newtheorem{proposition}[theorem]{Proposition}
\newtheorem{corollary}[theorem]{Corollary}
\theoremstyle{remark}
\newtheorem{definition}[theorem]{Definition}
\newtheorem{remark}{Remark}[section]
\renewcommand{\P}{\mathbb{P}}
\newcommand{\E}{\mathbb{E}}
\newcommand{\V}{\text{Var}}
\newcommand{\pf}{\begin{proof}}
\newcommand{\e}{\end{proof}}
\newcommand{\pe}{\hfill$\square$}
\newcommand{\R}{\mathbb{R}}
\newcommand{\KL}{\text{KL}}
\begin{document}

\begin{frontmatter}
\title{Entropic conditional central limit theorem and Hadamard Compression}
\runtitle{Entropic limit theorem and Hadamard Compression}

\begin{aug}
\author[A]{\fnms{Zhi-Ming}~\snm{Ma}\ead[label=e1]{mazm@amt.ac.cn}},
\author[A]{\fnms{Liu-Quan}~\snm{Yao}\ead[label=e2]{yaoliuquan20@mails.ucas.ac.cn}},
\author[A]{\fnms{Shuai}~\snm{Yuan}\ead[label=e3]{yuanshuai2020@amss.ac.cn}}
\and
\author[B]{\fnms{Hua-Zi}~\snm{Zhang}\ead[label=e4]{zhanghuazi@huawei.com}\orcid{0000-0000-0000-0000}}

\address[A]{University of Chinese Academy of Sciences\printead[presep={,\ }]{e1,e2,e3}}

\address[B]{Huawei Technologies Co., Ltd\printead[presep={,\ }]{e4}}
\end{aug}
as
\begin{abstract}
	We make use of an entropic property to establish a convergence theorem (Main Theorem), which reveals that the conditional entropy measures the asymptotic Gaussianity.  As an application, we establish the {\it entropic conditional central limit theorem} (CCLT), which is stronger than the classical CCLT. As another application, we show that continuous input under  iterated Hadamard transform,  almost every distribution of the output conditional on the values of the previous signals will tend to Gaussian, and the conditional distribution  is in fact  insensitive to the condition. The results enable us to make a theoretic study concerning Hadamard compression, which provides a solid theoretical analysis supporting the simulation results in previous literature. We show also that the conditional Fisher information can be used to measure the asymptotic Gaussianity.

\end{abstract}

\begin{keyword}[class=MSC]
\kwd[Primary ]{60F05}
\kwd{28D20}
\kwd{68P30}
\kwd[; secondary ]{94A17}
\kwd{62B10}
\kwd{94A29}
\end{keyword}

\begin{keyword}
\kwd{Conditional entropy}
\kwd{Entropic central limit theorem}
\kwd{Conditional distribution}
\kwd{Entropy jump}
\kwd{Conditional Fisher information}
\kwd{Hadamard compression}
\end{keyword}

\end{frontmatter}

\section{Introduction}\label{section1}

\subsection{Entropic Central Limit Theorem}
Suppose $\xi_1,\xi_2,\cdots,\xi_n$ are independent and identically distributed (\textit{i.i.d.}) random variables with mean $\mu$ and variance $\sigma^2$. Central limit theorem (CLT) states that the normalized partial sum
\begin{equation}\label{partial sum}
W_n = \frac{\sum\limits_{k=1}^n (\xi_k - \mu)}{\sqrt{n\sigma^2}}
\end{equation}
is asymptotically Gaussian distributed in   weak sense. Different from classical proof of CLT based on tools of characteristic functions, the entropic method, which is started by Linnik (1959) \cite{Lin1959}, provides an information-theoretic proof of CLT by studying the behaviour of entropy $h(W_n)$. Recall that for a random variable $X$ with continuous  density $f$,   the \textit{differential entropy} of $X$ is defined as
\begin{equation}\label{differential entropy}
h(X) = \int_\R -f(x)\log f(x) dx,
\end{equation}
where $\log$ denotes the natural logarithm. The \textit{Kullback-Leibler divergence} (KL-divergence) between two probability densities $f_1,f_2$ is given by
\begin{equation}\label{KL divergence}
\KL(f_1||f_2) = \int_\R f_1(x)\log\frac{f_1(x)}{f_2(x)}dx.
\end{equation}
To measure the Gaussianity of $X$, let $D(X) = \KL(f||g)$, where $g$ is the Gaussian density with the same mean and variance as $X$. Barron(1986)\cite{Bar1986} showed that
\begin{equation}\label{Barron_KL}
\lim\limits_{n\rightarrow\infty}D(W_n) = 0.
\end{equation}
Let $f_n$ denote the density of $W_n$, using Pinsker's inequality and \eqref{Barron_KL} we obtain that
\begin{equation}\label{L1_convergence}
\|f_n - g\|_{L^1} := \int_\R |f_n(x) - g(x)|dx\xrightarrow{n\rightarrow\infty} 0,
\end{equation}
which implies CLT directly. Note that $\eqref{L1_convergence}$ provides a strong convergence of the density of $W_n$ in the sense of $L^1$ norm, while only weak convergence is obtained by classical CLT.

The   proof of \eqref{Barron_KL} is based on the  well-known entropy jump inequality: 
\begin{equation}\label{EJI}
h\left(\frac{X+Y}{\sqrt{2}}\right) \geq \frac{h(X)+h(Y)}{2},
\end{equation}
where the equality holds if and only if $X$ and $Y$ are  Gaussian with the same variance. The difference $h\left((X+Y)/\sqrt{2}\right) - (h(X)+h(Y))/2$ is called \textit{entropy jump}. Let $W_n'$ be an independent copy of $W_n$, taking $X = W_n$, $Y = W_n'$ in \eqref{EJI} we obtain
\begin{equation}\label{increase on h_Sn}
h(W_{2n}) = h\left(\frac{W_n + W_n'}{\sqrt{2}}\right) \geq \frac{h(W_n)+h(W_n')}{2} = h(W_n).
\end{equation}
This indicates that $h(W_n)$ will increase to its supremum, and $W_n$ should converge to a "stable distribution" that makes the "=" hold in \eqref{increase on h_Sn}. On the other hand, the equality in \eqref{increase on h_Sn} holds if and only if $W_n$ is Gaussian distributed, and it is also known that Gaussian maximizes the entropy over all random variables with bounded variance. Therefore, it suggests that
\begin{equation}\label{Barron_h}
\lim\limits_{n\rightarrow\infty} h(W_n) = \frac{1}{2}\log2\pi e,
\end{equation}
which is equivalent to \eqref{Barron_KL} by some simple calculation (see \cite{Carlen1991}). This briefly illustrates the main picture of entropic method.

Because of the importance of entropy jump inequality \eqref{increase on h_Sn}, more researchers focused on intensive study on the amount of entropy jump, which leads to entropic CLT as a direct corollary. To describe it we recall an important parameter for random variables called Fisher information, which has close connection with entropy (as stated in \cite{McKean1966},\cite{Carlen1991},\cite{Johnsonbook}, etc.). More precisely,
for a random variable $X$ with absolutely continuous density $f$, its Fisher information is defined to be
\begin{equation}\label{Fisher information}
J(X) = \int_\R \frac{f'(x)^2}{f(x)}dx.
\end{equation}
The tail variance $L_X(R)$ of $X$ is defined to be
\begin{equation}\label{tail variance}
L_X(R) = \E[(X-\E X)^2\bm{1}_{\{|X-\E X|\geq R\}}].
\end{equation}
Using tools from Ornstein-Uhlenbeck semigroup, Carlen (1991)\cite{Carlen1991} showed that for any $\lambda\in(0,1)$ and normalized random variables $X,Y$ (\textit{i.e.}, $X,Y$ have zero mean and unit variance), if their Fisher information and tail variance are uniformly bounded, then there exists a non-negative $\delta$ such that
\begin{equation}\label{Carlen}
h\left(\lambda X + \sqrt{1-\lambda^2}Y\right) \geq \lambda^2 h(X) + (1-\lambda^2) h(Y) + \delta,
\end{equation}
where $\delta$ relies only on the uniform bounds of Fisher information and tail variance,  as well as the value of $\left(\frac{1}{2}\log 2\pi e-h(X) \right)$ and $\left(\frac{1}{2}\log 2\pi e-h(Y) \right)$. In other words, the entropy jump $\delta$ relies only on some bounds in regular conditions and the  entropy gaps of $X$ and $Y$ to Gaussian, but not on the particular distributions of $X$ and $Y$. In particular, $\delta = 0$ if and only if $h(X) = h(Y) = \frac{1}{2}\log 2\pi e$, or equivalently, $X,Y$ are  Gaussian with the same variance. Ball \textit{et al.} (2003)\cite{gap-EJ} proved a variational inequality for Fisher information and used it to deduce a lower bound on the entropy jump $\delta$.  Johnson \textit{et al.} (2004)\cite{FI-CLT} used the theory of projection in $L^1$ space to obtain an estimate on the decrease of Fisher information, which immediately leads to a lower bound on the entropy jump using de Bruijn identity. Furthermore, the convergence rate of $h(W_n)$ was also estimated in \cite{FI-CLT}.

Entropic CLT has drawn more and more attention in the past decade. Bobkov \textit{et al.} (2014)\cite{Esseenbound} proved a Berry-Esseen type bound for entropic  CLT.   Bobkov \textit{et al.} (2014)\cite{F-CLT} showed a more accurate convergence rate on the standard Fisher information by the Chebyshev-Hermite polynomials. More recently, Li \textit{et al.} (2019)\cite{Renyi} discussed the CLT in the sense of R$\acute{\mbox{e}}$nyi entropy. Johnson (2020)\cite{Johnson2020} relaxed the finite Poincar$\acute{\mbox{e}}$ constant condition for entropy gap theory. Gavalakis (2021)\cite{discrete2021} extended the entropic method to the CLT for lattice-valued discrete random variables.

\subsection{Conditional Central Limit Theorem}\label{polar intruduction}

As emphasized in \cite{yuan2014}, the random nature of many problems arising in the applied sciences leads to mathematical models where conditioning is
present, thus it is necessary to study the properties of the conditional distribution.

Holst (1979) proved a CCLT for a special conditional distribution in \cite{Holst1979TwoCL}, which has a simple form that under some regular assumptions for $(\xi_i, \eta_i)$,
$$\P\left( \sum_{i=1}^n \frac{\xi_i}{\sqrt{\sum_{i=1}^n\V(\xi_i)}}\le t|\sum_{i=1}^n \eta_i=n\E \eta_1+o(1)\right) \to \Phi(t), \;\; n\to\infty,$$
where $\eta_i$'s are \textit{i.i.d.}and discrete, $\xi_i$'s are also \textit{i.i.d.}with $E \xi_i=0,\forall i$, $\Phi$ is standard Gaussian distribution function. The result was strengthened in \cite{Janson2001MomentCI} to $n\E  \eta_1+O(\sqrt{n}).$

In \cite{Rubshtein1996} Rubshtein proved a CCLT for ergodic stationary process. Using martingale method, Rubshtein showed that for a two-dimensional ergodic stationary process $(\bm{\xi},\bm{\eta}):=\{(\xi_n,\eta_n),n\geq  1\}$ satisfying some regular conditions, the conditional distribution function
\begin{equation}
F_{n,\bm{z}}(t) = \P\left(\frac{S_n - \E (S_n|\bm{\eta}=\bm{z})}{\sqrt{\V (S_n|\bm{\eta}=\bm{z})}}\leq t| \bm{\eta}=\bm{z}\right), t\in\R
\end{equation}
converges weakly to the distribution function of standard Gaussian for realizations $\bm{z} =\bm{\eta}(\omega)= \{\eta_n(\omega)\}$ almost surely, where $S_n = \xi_1+\xi_2+\cdots+\xi_n$.

More recently, \cite{yuan2014} showed a limit theorem for the conditionally independent case. \cite{2021Stein} used Stein method to deduce an upper bound for Wasserstein distance between conditional distribution and Gaussian distribution.

\subsection{Hadamard Compression}\label{section1.3}

In 2009, Arikan  found Polar code with successive cancelation (SC) decoding in \cite{Polar2009}, which can achieve the Shannon limitation in GF(2) when block length tends to infinity.
Afterwords,    Polar method was also applied in lossless source compression \cite{Arikansource2010}.
In the discrete inputs case  with polar transform and SC decoding, the conditional entropy of the $k$th signal $Y_k$ under the realization of the previous signals $Y_1=y_1,Y_2=y_2,\cdots, Y_{k-1}=y_{k-1}$ becomes either 0 or 1, which means that some signals can be uniquely determined and thus can be compressed (see \cite{Arikansource2010} for details). While when inputs are continuous random variables with densities, \cite{Renyiinformationdimension} proved that any linear encoding can not achieve lossless compression.

On the other hand,   the design of the encoding matrix in continuous input compression  has been studied for long time.
The designation of encoding matrix can be divided into two approaches.
One approach is random matrix, such as Gaussian matrix and LDPC, that can facilitate theoretical analysis (\cite{GME}, \cite{2006Compressed}  and \cite{2008Compressed} etc.). Another approach is deterministic matrix, such as Hadamard matrix,  unitary geometry method and Reed-Muller based matrix (  \cite{ULE}, \cite{2014Hadamard}, \cite{2011Hadamardpaper},      \cite{ArikanrealF2021}, \cite{UnitaryGeometry}, \cite{RM-compress1} and \cite{RM-compress2} etc.). The first approach has  rich theoretical results but   poor practicality since its encoding is random.
The second approach   shows directly some practical results with applications, but to the best of our knowledge, most of the practical results were obtained by simulation  experiments  and there were lack of theoretical analysis.

To investigate analog compression of continuous random inputs, \cite{ULE}  employed deterministic
Hadamard matrix transform to replace Polar transform, which is  also considered in \cite{2011Hadamardpaper} and \cite{ArikanrealF2021}.

In Chapter 4 of \cite{ULE}, the signal $\bm{\eta}=(\eta_1,\eta_2,\cdots, \eta_{2^n})^T$ to be compressed by Hadamard transform was supposed to have the form below
\begin{equation}\label{model in ULE}
\bm{\eta}= H_n\bm{\tilde{\xi}}+\bm{G}=H_n(\tilde{\xi}_1,\tilde{\xi}_2,\cdots,\tilde{\xi}_{2^n})^T+(G_1,G_2,\cdots,G_{2^n})^T\overset{d}{=}H_n(\bm{\tilde{\xi}}+\bm{G}),
\end{equation}
where $\bm{\tilde{\xi}}$ is the original signal and $\bm{G}$ is an independent Gaussian noise, and both have \textit{i.i.d.} components respectively. \cite{ULE} aimed to use
high-entropy position to recover low-entropy position. From numerical examples the author
deduced that a high entropy subset of $\eta_i$'s are sufficient to recover all $\bm{\eta}$. This deduction was
verified by further examples of sampling and obtained a satisfactory performance.

Before moving ahead, we introduce some notations. Let $(X, Y)$ be a random pair where $X$ is a random variable and $Y$ may be a multiple-dimensional random vector. If for any realization of $Y=y$, the conditional density of $X$ given $Y=y,$ denoted by $f_{X|Y}(x|y),$ exists and is continuous, then the conditional entropy of $X$ given $Y=y$ is defined as
	\begin{equation}
		h(X|Y=y):=-\int f_{X|Y}(x|y)\log f_{X|Y}(x|y) dx.
	\end{equation}
In this paper we use $h(X|Y)$ to denote the corresponding random variable, and the traditional conditional entropy, i.e. the expectation of $h(X|Y)$, is denoted
explicitly as
	\begin{equation*}
		\E h(X|Y)=-\iint f_{X|Y}(x|y)\log f_{X|Y}(x|y) dx dP_Y(y).
	\end{equation*}
Similar notations will also be used for conditional variance and Fisher information.

For independent random variables $X_1,X_2$ with densities $f_1,f_2$ and $\lambda\in(0,1)$, we write $X_1 *_\lambda X_2:= \lambda X_1 + \sqrt{1 - \lambda^2} X_2$, and denote the density of $X_1 *_\lambda X_2$ by $f_1 *_\lambda f_2$.

Below we consider a slightly more general Hadamard transform. Let $\lambda\in(0,1)$,
\begin{equation}
	H_\lambda=\begin{bmatrix}
		\lambda& \sqrt{1-\lambda^2}\\
		\sqrt{1-\lambda^2} & -\lambda
	\end{bmatrix},
\end{equation}
$H_{\lambda, n}=B_N(H_\lambda)^{\otimes n},$ where $N=2^n$, $n$ is the block length, $B_N$ is the bit permutation matrix, $\otimes$ is the Kronnecker product. We simply denote $H_{1/\sqrt{2}}=H, H_{1/\sqrt{2}, n}=H_n.$

Given a two dimensional vector input $\xi=(\xi_1, \xi_2)^T,$ through Hadamard transform we will get a two dimensional vector output $\eta=(\eta_1,\eta_2)^T= H_\lambda \xi.$ We call $\eta_1$ the upper perform or $"0"$ perform, and call $\eta_2$ the lower perform or $"1"$ perform.  For the n-iteration of Hadamard transform
$H_{\lambda, n}=B_N(H_\lambda)^{\otimes n},$ given an $N$-dimensional input  $\xi^n=(\xi^n_1,\xi^n_2,\cdots, \xi^n_N)^T,$ we will receive an $N$-dimensional output  signal $\eta^n=(\eta^n_1,\eta^n_2,\cdots, \eta^n_N)^T=H_{\lambda,n}\xi^n$.
Each $\eta^n_k$ is obtained by successive $n$ "0" or "1" performs, say, $(\omega_1,\omega_2,\cdots\omega_n),$ with $\omega_i \in \{0,1\}.$ In fact, $(\omega_1,\omega_2,\cdots\omega_n)$ records the polarization path of $\eta^n_k.$

We set $\Omega:=\{0,1\}^\infty$. For $n\ge 1$ and  $\omega =(\omega_1,\omega_2,\cdots\omega_n, \cdots ) \in \Omega,$ we define
\begin{equation}\label{define of k(n,omega)}
	k(n,\omega)=[\omega_1\omega_2\cdots\omega_n]_{2\to 10}+1,
\end{equation}
where $[\cdot]_{2\to10}$ indicates the conversion from  binary to decimal.
We denote
\begin{equation}
	\eta^n_k(\omega)=\eta^n_{k(n,\omega)}.
\end{equation}
Due to the action of  the bit permutation matrix  $B_N,$  one can check that for each polarization path $(\omega_1,\omega_2,\cdots\omega_n),~ \eta^n_k(\omega)$ is exactly the $k$th component $\eta^n_k$ of $ \eta^n.$

We now suppose that the input $\xi^n=(\xi^n_1,\xi^n_2,\cdots, \xi^n_N)^T$ satisfies $\{\xi^n_i\}_{i=1}^N\overset{\textit{i.i.d.}}{\sim} \xi$, where $\xi$ has density $p(x)$. Let  $\eta^n=(\eta^n_1,\eta^n_2,\cdots, \eta^n_N)^T=H_{\lambda,n}\xi^n$ be the received signal.
For $i<j,$ we write $\eta_{i:j}^n$ for $ (\eta^n_i, \eta_{i+1}^n,\cdots, \eta^n_j).$ We denote the density of $\eta_k^n$ as $f_k^n$, the joint density of $\eta_{i:j}^n=(\eta^n_i, \eta_{i+1}^n,\cdots, \eta^n_j)$ as $f_{i:j}^n$, and the conditional density of $\eta_{k}^n$ given  $\eta_{1:k-1}^n=(y_1, y_2, \cdots, y_{k-1})$ as $f^n_{k|y}$ with $y$ stands for $(y_1, y_2, \cdots, y_{k-1})$.
Note that $H_\lambda$ is an orthogonal transform.  Repeatedly employing the chain rule of entropies and the fact that conditional entropies are invariant  under orthogonal transformations. We will find that for all $n$ and $\omega,$

 \begin{equation}\label{core for general n}
 	\begin{aligned}
 		&|\E h(\eta^{n+1}_{k(n+1,\omega)}|\eta^{n+1}_{1:k(n+1,\omega)-1} )-\E h(\eta^{n}_{k(n,\omega)}|\eta^{n}_{1:k(n,\omega)-1} )|\\
 		&=\E h(\eta^{n}_{k(n,\omega)}\ast_\lambda (\eta^n_{k(n,\omega)})'|\eta^{n}_{1:k(n,\omega)-1},(\eta^{n}_{1:k(n,\omega)-1})' )-\E h(\eta^{n}_{k(n,\omega)}|\eta^{n}_{1:k(n,\omega)-1} ),
 	\end{aligned}
 \end{equation}
 where $X'$ denotes an \textit{i.i.d.} copy of  $X$.

 If we fix $\omega$ and  denote
 \begin{equation}\label{Xn, Yn}
 	X_n=\eta^n_{k(n,\omega)},\;\; Y_n=(\eta^n_{1},\eta^n_{2},\cdots,\eta^n_{k(n,\omega)-1})=\eta^n_{1:k(n,\omega)-1},
 \end{equation}
we may rewrite  \eqref{core for general n} as

\begin{align}\label{basic1}
	|\E_{Y_{n+1}}h(X_{n+1}|&Y_{n+1})-\E_{Y_n} h(X_n|Y_n)| \\
= &\E_{Y_n,Y'_n}h(X_n *_\lambda X'_n|Y_n,Y'_n) -  \E_{Y_n} h(X_n|Y_n). \nonumber
\end{align}
The detail proof of \eqref{basic1} will be given in Appendix \ref{A.1}. Note that \eqref{basic1} holds for any polarization path $\omega$, thus it reflects an universal
 property of signal random variables in Hadamard compression.


\section{Paper Structure and Main Results}\label{section2}
 The remainder of this paper is organized as follows.

 In Section \ref{section3},  we provide a Main Theorem  (Theorem \ref{main theorem}) of this paper. We show that if a sequence of random pairs $\{(X_n, Y_n)\}_{n=1}^\infty$  satisfies  \eqref{basic1}, then with some mild additional conditions,  the average of the conditional entropy $h_n := \E h(X_n|Y_n)$ will converge to the entropy of Gaussian with the variance $\sigma^2 :=\lim_{n\to\infty}\E\V(X_n|Y_n)$ (see Theorem \ref{main theorem} for details). Thus,  the conditional entropy measures the asymptotic Gaussianity. This theorem reveals a phenomena that for a sequence of random pairs $\{(X_n, Y_n)\}_{n=1}^\infty$  satisfying the entropic property \eqref{basic1}, one may deduce some results similar to CCLT.   We emphasize that the phenomena revealed by this Main Theorem is valid not only on Hadamard transform, but also on any sequence which satisfies the entropic property \eqref{basic1}.


 The proof of the Main Theorem relies heavily on a Key Lemma (Lemma \ref{extended Carlen}), in which we extend  Theorem 1.2 in \cite{Carlen1991} (see \eqref{Carlen}) to random variables with distinct variances.  We show that for $X, Y$ with uniformly lower bounded variance,  the entropy jump $\delta$  depends only on  the difference between their variance and their gaps to Gaussian. Note that without equal variance condition, the proof of Lemma \ref{extended Carlen} is quite different from that of  \cite{Carlen1991}.  We treat the variance, as well as the differential entropy and the KL divergence  as  functionals in density space and find its continuity with respect to the density. See Proposition \ref{continuity on h V KL} for details.

Section \ref{section4} is devoted to establish the {\it entropic conditional central limit theorem}, which is an important application of the Main Theorem. Let $ (X_n,Y_n)$ be defined by \eqref{Xn, Yn}, if we specify the path $\omega=(0,0,\cdots,0),$  then the conditional distribution of $X_n$ given $Y_n=y,$ denoted by $(X_n|Y_n=y),$  will take the form
 \begin{equation}\label{main to sum1}
 (X_n|Y_n=y)=(\frac{\xi_0^1+\xi_0^2+\cdots+\xi_0^{2^n}}{2^{n/2}}|\eta_0^1=y_1,\eta_0^2=y_2,\cdots ,\eta_0^{2^n}=y_{2^n}),
 \end{equation}
 where $\{(\xi_0^i, \eta_0^i), i=1,2,\cdots,2^n\}$ are \textit{i.i.d.}, $(\xi_0^i, \eta_0^i)\overset{d}{=}(\xi_0, \eta)$, $y=(y_1,y_2,\cdots,y_{2^n})$. Note that $\eta_0^i$ can be considered as a 'side information' of input signal $\xi_i$.
 Therefore, we can make use of Theorem \ref{main theorem} to establish the CCLT.

 To be more precise, suppose that $(\xi,\eta)$ is a two-dimensional random variable such that for almost sure  realization $\eta=y$, the regular conditional distribution $\xi|\eta=y$ exists with finite variance and is absolutely continuous with respect to Lebesgue measure. Let $(\xi_1,\eta_1),(\xi_2,\eta_2),\cdots,(\xi_n,\eta_n)$ be \textit{i.i.d.} samples from $(\xi,\eta)$, $\bm{\eta}_n = (\eta_1,\eta_2,\cdots,\eta_n)$ be the random vector of the first $n$ samples of $\eta$, and $\bm{y}_n = (y_1,y_2,\cdots,y_n)$ be a realization of $\bm{\eta}_n$. Let $W_n = (\xi_1+\xi_2+\cdots+\xi_n)/\sqrt{n}$ be the normalized partial sum, and $\tilde{g}$ be a Gaussian density with the same mean and variance as the conditional distribution $(W_n|\bm{\eta}_n=\bm{y}_n)$. Denote the conditional density of $(W_n|\bm{\eta}_n=\bm{y}_n)$ by $f_n(x|\bm{y}_n)$. Let
 \begin{equation}\label{nconditional Gaussianity1}
 	D(W_n|\bm{y}_n) := \KL(f_n(x|\bm{y}_n)||\tilde{g}) = \int_\R f_n(x|\bm{y}_n)\log\frac{f_n(x|\bm{y}_n)}{\tilde{g}(x)}dx.
 \end{equation}
 Equivalently, $D(W_n|\bm{y}_n)$ is also given by
 \begin{equation}
 	D(W_n|\bm{y}_n) = \KL(\tilde{W}_n(\bm{y}_n)||\mathcal{N}(0,1)|\bm{\eta}_n = \bm{y}_n),
 \end{equation}
 where
 \begin{equation}\label{Sn'1}
 	\tilde{W}_n(\bm{y}_n) = \frac{\sum\limits_{k=1}^n (\xi_k - \E(\xi|\eta_k=y_k))}{\sqrt{\sum\limits_{k=1}^n \V(\xi|\eta_k = y_k)}}
 \end{equation}
 stands for the conditional normalized partial sum. Then, with some mild conditions,  in Section \ref{section4} we will prove the following result (see  Theorem \ref{sum theorem} and Corollary \ref{sum corollary} for details).
 	\begin{equation}\label{nmain result_a1}
 	\lim\limits_{n\rightarrow\infty}\E D(W_n|\bm{\eta}_n) = 0.
 \end{equation}

Note that the entropic CLT \eqref{Barron_KL} is covered by  \eqref{nmain result_a1} if we take $\eta$ to be independent of $\xi$. Since the Gaussianity of $W_n$ conditioning on $\bm{\eta}_n = \bm{y}_n$ is measured by  \eqref{nconditional Gaussianity1},  hence  \eqref{nmain result_a1} provides also the asymptotic Gaussianity of $(W_n|\bm{y}_n)$ which can not be easily deduced by classical CLT.  Moreover, our result  is in some sense stronger than the corresponding result in \cite{Rubshtein1996}
 where only the weak convergence of conditional distribution $\tilde{W}_n(\bm{y}_n)$ is obtained. However,  the convergence in KL-divergence given by  \eqref{nmain result_a1} leads to $L^1$-convergence of the density of $(W_n|\bm{\eta}_n)$ by Pinsker's inequality (cf.$\eqref{L1_convergence}$ ).

   In Section \ref{section4} we  prove also a convergence rate for the entropic conditional CLT, which is stimulated by the rate theorem for entropic CLT in \cite{Esseenbound}. See Theorem \ref{rate theorem} for details.

 In Section \ref{section5}, we study the conditional Fisher information.
 Recall that for a random variable X with differentiable density, the Cramer-Rao bound (CR bound)
 reads
 \begin{equation}\label{CR bound}
 J(X)\geq \frac{1}{\V(X)},
 \end{equation}
 where the equality holds if and only if X is Gaussian (see e.g. Lemma 1.19 in \cite{Johnsonbook}).
 Therefore apart from differential entropy, the Fisher information is another useful quantity to measure Gaussianity.
 It is
 natural to ask whether the conditional Fisher information can also be used to measure the asymptotic
 Gaussianity. To answer this question we need some kind of continuity of Fisher information against
 the densities.
 To  our knowledge  the continuity of Fisher information has not been investigated yet. Although the lower semi-continuity under weak topology was shown in \cite{F-CLT}, it is not applicable to
 our purpose, since we need the continuity of Fisher information against densities equipped with $L^1$-topology. The difficulty lies here is that the definition of Fisher information involves derivatives
 and it is notably known that $L^1$ norm does not coincide with $H^1$ norm. Thus, we consider only
 the densities of random variables with small Gaussian perturbations (cf. \eqref{IGD-n}).  We show that for such random variables  the continuity of Fisher information holds (see Lemma \ref{continuity of Fisher information}), and then the expectation of the conditional Fisher information $\E J(X_n|Y_n)$
 converges to    $1 / \sigma^2$ where $\sigma^2= \lim_{n\to\infty}\E \V(X_n|Y_n)$.  See  Theorem \ref{Fisher limit} for details.

 In Section \ref{section6}, we investigate  Hadamard compression for continuous input.  We show that if input signals are the same as  in  \cite{ULE}, then under Hadamard transform and SC decoding frame, almost every distribution of the received signal conditional on the knowledge of the previous signals   tends to Gaussian, when the  block length tends to infinity, see Theorem  \ref{polar theorem} for details. Moreover, the conditional distribution  is insensitive to the values of previous received signals (cf. \eqref{polarVn}). We studied also the limit relation between variance and Fisher information (Theorem \ref{polar theorem1}).  In Theorem  \ref{V est} we make a rough estimate of the limit variance, which has close link to the error analysis in compression.

  As we have mentioned in Section \ref{section1.3},  from numerical examples the author in \cite{ULE}
 deduced that a high entropy subset of received signal $\eta_i$'s are sufficient to recover all received signals. The deduction was
 verified by further examples of sampling and obtained a satisfactory performance. However, there was no theoretical deduction supporting this performance. It is not clear why those  subset of $\eta_i$'s with lower entropy can be compressed.
 Note that for continuous signals, low entropy does not necessarily imply small variance, and hence does not necessarily mean small uncertainty (see the example in Appendix \ref{a ex}).   Indeed, in the literature (see e.g. \cite{ULE},\cite{2011Hadamardpaper}, \cite{ArikanrealF2021} and references therein ), there was no theoretic study concerning Hadamard compression.

   Based on the research in this section, we can  provide a theoretic study concerning Hadamard compression.   Firstly, by \eqref{polarhn} or \eqref{polarhn1}, for almost every polarization path  $\omega,$ when $n$ tends to $\infty,$ the distribution of received signal $\eta^n_k(\omega)$  conditional on the values of  previous received signals  converges to a Gaussian distribution with corresponding conditional variance. Secondly, by \eqref{polarVn} we know that  the corresponding conditional variance $\V(\eta^n_k(\omega)|\eta^n_{1:k(\omega)-1})$ converges to a deterministic value $V_\infty(\omega).$  Third, the conditional distribution  $(\eta^n_k(\omega)|\eta^n_{1:k(\omega)-1})$ is insensitive to the values of previous received signals in the sense that,  for almost every realization $\eta^n_1=y_1,\eta^n_2=y_2,\cdots,\eta^n_{k-1}=y_{k-1},$ the conditional distribution of $(\eta^n_k(\omega)|\eta^n_1=y_1,\eta^n_2=y_2,\cdots,\eta^n_{k-1}=y_{k-1})$ converges  to  the Gaussian distribution with same variance  $V_\infty(\omega).$ This  can be seen from the deduction below. For almost every $\omega\in \Omega$, when $n$ is sufficiently large, there exists $\sigma^n_k(\omega)>0$, where $k=k(n,\omega)$, such that for almost every realization $(y_1,y_2,\cdots, y_{k-1})$,
 \begin{align*}
 	&\E h(\eta^n_k(\omega)|\eta^n_1,\eta^n_2,\cdots,\eta^n_{k-1})\overset{(a)}{\approx }\frac{1}{2}\log(2\pi e (\sigma^n_k(\omega))^2)\\
 	&\overset{(b)}{\approx }\frac{1}{2}\log(2\pi e \V(\eta^n_k|\eta^n_1=y_1,\eta^n_2=y_2,\cdots,\eta^n_{k-1}=y_{k-1})).
 \end{align*}
 where $(a)$ comes from  \eqref{polarhn}, and $(b)$ comes from  \eqref{polarVn}.

Now, we can conduct a solid theoretical analysis supporting the simulation results of \cite{ULE}.  When $n$ is large enough,  for any $k,~1\le k \le 2^n,$ the received signal $\eta^n_k$ is roughly Gaussian distributed with variance, say $\sigma^n_k,$ and $\sigma^n_k$ is irrelevant to the previous received signals. It is well known that the entropy $h(\eta)$ of a Gaussian variable $\eta$ is uniquely determined by its variance $\sigma$, that is, $h(\eta)= \frac{1}{2}\log(2\pi e \sigma^2).$  Consequently, those $\eta^n_k$'s with lower entropy have less variance, and hence have smaller uncertainty. Therefore one can safely use  high entropy subset of $\eta^n_k$'s  to recover all the received signals, as performed in \cite{ULE}.  Further more, since the conditional distribution is insensitive to the realization of previous signals,  when $n$ is large enough, the choice of compressed positions can be fixed for whatever input signals. This means  that the experimental Hadamard  compression performed  in \cite{ULE} exhibits universal applicability across diverse input signals.

\section{Main Theorem and Key Lemma}\label{section3}

\subsection{Statement of Main Theorem}
We first recall some notations and conventions used in this paper.  Throughout this paper, ''$\log$'' denotes the natural logarithm. $g_a$ stands for the Gaussian density with mean 0 and variance $a$, and $g$ for standard Gaussian density. The letter $G$ will always stands for a standard Gaussian random variable. We denote by $\mathcal{T}$ the collection of all absolutely continuous probability densities, \textit{i.e.},
\begin{equation}\label{T}
\mathcal{T} = \{f:\ f \text{ is absolutely continuous and }f(x)\geq0,\ \int_\R f(x)dx = 1\}.
\end{equation}
We write $L^1_{f(x)}$ for $L^1(\R, f(x)dx)$, in
particular, we write $L^1$ for $L^1(\R,dx)$.

For a random pair $(X,Y)$ with $X$ being a random variable, we always assume that the conditional density $f_{X|Y}(\cdot|y)$ exists and  $f_{X|Y}(\cdot|y)\in\mathcal{T}$ for  almost surely $y$. We write
\begin{equation}
\V(X|Y)=V(Y):=\E(X^2|Y)-[\E(X|Y)]^2
\end{equation}
for the random variable representing the conditional variance under random realization, and we write
$\V(X|y)=V(y)$ for the conditional variance under given realization $y$.
In addition, the average of conditional variance is denoted by $\E\V(X|Y)$. We make the same convention for all functionals operating on  conditional distributions. For example, $h(X|Y)$ is a random variable standing for the conditional entropy under random realization, and the average of $h(X|Y)$ is given by
\begin{equation}\label{conditional entropy}
\E h(X|Y) = \int \int_\R -f_{X|Y}(x|y)\log f_{X|Y}(x|y)dx P_Y(dy).
\end{equation}
Similarly, The average of conditional Fisher information $J(X|Y)$ is denoted by
\begin{equation}\label{conditional Fisher information}
\E J(X|Y) = \int \int_\R \frac{f_{X|Y}'(x|y)^2}{f_{X|Y}(x|y)}dx P_Y(dy).
\end{equation}

Throughout this paper, $l$ stands for a decreasing function $l(R)$ such that $l(R) \rightarrow0$ as $R\rightarrow\infty$. For $\lambda\in(0,1)$ and independent random variables $X,Y$, their entropy jump is defined by
\begin{equation}
\triangle_\lambda(X,Y) := h\left(\lambda X + \sqrt{1-\lambda^2}Y\right) - (\lambda^2h(X)+(1-\lambda^2)h(Y)).
\end{equation}

For a random pair $(X,Y)$, the Gaussianity of $X$ conditioning on $Y=y$ is measured by
\begin{equation}
D(X|Y=y):=\KL[(X|Y=y)\| \mathcal{N}(\mu_y,\sigma^2_y)]=\int f(x|y)\log \frac{f(x|y)}{g(x)}dx,
\end{equation}
where $\mu_y=\E(X|Y=y), \sigma^2_y=\V(X|Y=y), f(x|y)$ is the density of $X$ given $Y=y$, and $g$ is the density of $\mathcal{N}(\mu_y,\sigma^2_y)$.

In the Main Theorem stated below, instead of using traditional tools from probability theory such as characteristic function or martingale, we make use of the entropic property \eqref{basic1} to establish a result similar to conditional CLT. In fact it is even stronger than traditional CCLT,  because by Pinsker's inequality the convergence  in KL divergence sense  implies the convergence in total variance sense rather than in weak sense (cf. \eqref{L1_convergence})

\begin{theorem}[Main Theorem]\label{main theorem}
	Let $\{(X_n, Y_n)\}_{n=1}^\infty$ be a sequence of random pairs such that the conditional distribution $X_n|Y_n=y_n$ has absolute continuous density for $y_n$ almost surely.  Assume the following conditions hold,
	\begin{enumerate}
		\item There exists $a>0$ such that $\V(X_n|Y_n) \geq a,\ a.s.,\ \forall n\in\mathbb{N}^+$ (\text{here and henceforth, } $\mathbb{N}^+$ stands for positive integers).
		\item There exists $J_0 < \infty$ such that $\sup\limits_n\E J(X_n|Y_n)\leq J_0$.
		\item There exists function $l$ decreasing to 0 such that
		$\E[L_{X_n|Y_n}(R)]\leq l(R),\ \forall n \in\mathbb{N}^+, R\geq0.$
	\end{enumerate}
	Denote by $(X_n', Y_n')$ an independent copy of $(X_n, Y_n)$. If
	\begin{equation}\label{entropy condition 1}
	\lim\limits_{n\rightarrow\infty} \E h(X_n|Y_n)\ \ \text{exists},
	\end{equation}
	and
	\begin{equation}\label{entropy condition 2}
	|\E h(X_{n+1}|Y_{n+1})-\E h(X_n|Y_n)| = \E\left[h(X_n *_\lambda X'_n|Y_n,Y'_n) -  h(X_n|Y_n)\right]
	\end{equation}
	for some $\lambda\in(0,1)$. Then
	\begin{equation}\label{general D}
	D(X_n|Y_n) \xrightarrow{\text{Pr.}} 0,\ \ n\rightarrow\infty,
	\end{equation}
	and
	\begin{equation}\label{general variance}
	\lim\limits_{n\rightarrow\infty}\E|\V(X_n|Y_n) - \E \V(X_n|Y_n)| = 0.
	\end{equation}
	Furthermore, If $\{h(X_n|Y_n),n\geq 1\}$ is uniformly integrable,  then there exists $\sigma^2>0$ such that
	\begin{align}
	&\lim\limits_{n\rightarrow\infty} \E\V(X_n|Y_n) = \sigma^2 \label{V limit},\\
	&\lim\limits_{n\rightarrow\infty} \E h(X_n|Y_n) = \frac{1}{2}\log 2\pi e \sigma^2 \label{h limit}.
	\end{align}
\end{theorem}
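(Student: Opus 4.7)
The plan is to exploit the hypothesized entropic identity \eqref{entropy condition 2} together with the Key Lemma \ref{extended Carlen} in order to convert the scalar convergence of $\E h(X_n|Y_n)$ into pointwise information about the conditional distributions. Since $(X_n,Y_n)$ and $(X_n',Y_n')$ are \textit{i.i.d.}, the right-hand side of \eqref{entropy condition 2} can be rewritten as the averaged entropy jump
\[
	\E h(X_n *_\lambda X_n' | Y_n, Y_n') - \E h(X_n|Y_n) = \E_{y,y'} \triangle_\lambda\bigl((X_n|y),(X_n'|y')\bigr).
\]
The hypothesized existence of $\lim_n \E h(X_n|Y_n)$ forces the left-hand side to tend to $0$, and nonnegativity of the entropy jump (the usual Shannon--Stam inequality) then yields $\triangle_\lambda((X_n|y),(X_n'|y')) \to 0$ in probability under the product measure $P_{Y_n}\otimes P_{Y_n'}$.

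Next I would invoke Lemma \ref{extended Carlen} pointwise: it lower-bounds $\triangle_\lambda$ by a nonnegative $\delta$ depending only on the gaps $D(\cdot)$, the variance discrepancy $|V(y)-V(y')|$, and uniform Fisher/tail-variance bounds, with $\delta=0$ only when both densities are Gaussian with the same variance. Because hypotheses (2)--(3) are in expectation rather than pointwise, I localize via Markov's inequality: for each $\epsilon>0$, the good set $A_{n,\epsilon}=\{y:J(X_n|y)\le J_0/\epsilon,\ L_{X_n|y}(R_k)\le 2^k l(R_k)/\epsilon\text{ for all }k\}$ (for a fixed exhausting family $R_k\uparrow\infty$) has probability at least $1-2\epsilon$. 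Applying the Key Lemma on $A_{n,\epsilon}\times A_{n,\epsilon}$ yields $D(X_n|Y_n)\to 0$ in probability and $|V(Y_n)-V(Y_n')|\to 0$ in probability; letting $\epsilon\downarrow 0$ gives \eqref{general D}. For \eqref{general variance} I combine the symmetry bound $\E|V_n-\E V_n|\le\E|V_n-V_n'|$ with the in-probability convergence $V_n-V_n'\to 0$ and uniform integrability of $V_n$. The latter follows from the tail-variance hypothesis: writing $Z_n=X_n-\E(X_n|Y_n)$,
\[
	\E V_n\bm{1}_{\{V_n>M\}}=\E Z_n^2\bm{1}_{\{V_n>M\}}\le R^2\,\P(V_n>M)+l(R)\le R^2 l(0)/M+l(R),
\]
and optimizing $R=M^{1/4}$ yields $\E V_n\bm{1}_{\{V_n>M\}}\to 0$ uniformly in $n$.

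For the last assertions, under uniform integrability of $\{h(X_n|Y_n)\}$ I would combine the pointwise identity $h(X_n|Y_n)=\tfrac12\log(2\pi e V_n)-D(X_n|Y_n)$ with the in-probability convergence $D(X_n|Y_n)\to 0$ and the $L^1$ convergence $V_n-\E V_n\to 0$ via a subsequence argument. Since $\E V_n\le l(0)$, every subsequence admits a further one with $\E V_{n_k}\to\sigma^2\ge a$; along it $V_{n_k}\to\sigma^2$ in $L^1$ and hence, after one more extraction, almost surely, so $h(X_{n_k}|Y_{n_k})\to\tfrac12\log(2\pi e\sigma^2)$ almost surely and, by UI, in $L^1$. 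Matching with the given scalar limit pins $\sigma^2$ uniquely via $\sigma^2=(2\pi e)^{-1}\exp(2\lim_n\E h(X_n|Y_n))$, so the limit is independent of the subsequence and \eqref{V limit}--\eqref{h limit} follow. The main obstacle is the Markov-localization step: one must carefully track how the pointwise lower bound $\delta$ in Lemma \ref{extended Carlen} depends on the truncation level $\epsilon$ and on the Fisher/tail-variance cutoffs, and verify that this dependence degrades mildly enough that $\triangle_\lambda\to 0$ in the product law really does force $D(X_n|Y_n)\to 0$ and $|V(Y_n)-V(Y_n')|\to 0$ in probability, rather than just on the good set.
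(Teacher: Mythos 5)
Your proposal is correct and follows essentially the same route as the paper's proof: Markov-localization to a good set on which the Key Lemma applies with an $n$-independent $\delta$, forcing $D(X_n|Y_n)\to0$ and $|\V(X_n|Y_n)-\V(X_n'|Y_n')|\to0$ in probability from the vanishing entropy jump, then the symmetrization/conditioning trick plus uniform integrability for the $L^1$ variance statement, and a subsequence argument pinning $\sigma^2$ via the scalar limit of $\E h(X_n|Y_n)$. Your explicit truncation argument for the uniform integrability of $\V(X_n|Y_n)$ is a detail the paper asserts without proof, but it does not change the approach.
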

\begin{remark}
	Here are examples for the conditions in Theorem \ref{main theorem}. As stated in Section \ref{section1.3}, condition \eqref{entropy condition 2} is held in Hadamard transform. Further, if $\omega=(0,0,\cdots,0)$, the conditions, that the  uniformly integrability of $\{h(X_n|Y_n),n\geq 1\}$ and the convergence property \eqref{entropy condition 1}, are   held immediately, as proved in Section \ref{section4}. While for general $\omega$,
	if the input signal of Hadamard compression $\xi$ satisfies the form
	\begin{equation}\label{Gauss}
	\xi=\hat{\xi}+G,
	\end{equation}
	where $G$ is Gaussian and independent with $\hat{\xi}$, as assumed in \cite{ULE}, then   the  uniformly integrability of $\{h(X_n|Y_n),n\geq 1\}$ and the convergence property \eqref{entropy condition 1}  are both held for almost any $\omega$ (on a specific probability space). See details in Appendix \ref{A.2}.
\end{remark}

\subsection{Key Lemma and its Proof}
As a core step for proving Main Theorem, in this subsection we prepare a Key Lemma (Lemma \ref{extended Carlen}), which is an Extension of Theorem 1.2 of \cite{Carlen1991}. The results in this subsection are similar with those in \cite{Carlen1991}, the main difference is that we treat the case where the random variables have distinct variances. Note that without equal variance condition, the proof of Lemma \ref{extended Carlen} is quite different from that of  \cite{Carlen1991}.  We treat the variance, as well as the differential entropy and the K-L divergence  as  functionals in density space and find its continuity with respect to the density  (see Proposition \ref{continuity on h V KL}) .

\begin{lemma}[Key Lemma]\label{extended Carlen}
	
	Suppose $X$ and $Y$ are independent random variables with absolutely continuous densities. Assume the following conditions satisfy,
	\begin{enumerate}
		\item $\min\{J(X),J(Y)\} \leq J_0$ for some $J_0 > 0$.
		\item $\max\{\V(X),\V(Y)\} \geq a$ for some $a > 0$.
		\item $\min\{L_X(R),L_Y(R)\} \leq l(R)$ for some non-negative decreasing function $l(R)$ such that $l(R) \rightarrow0$ as $R\rightarrow\infty$.
	\end{enumerate}
	If
	\begin{equation}\label{entropy gap and variance difference}
	\max\{D(X),D(Y)\} + |\V(X)-\V(Y)| \geq \epsilon > 0,
	\end{equation}
	then for any $\lambda\in(0,1)$, there exists a positive number  $\delta = \delta(\lambda,J_0,a,l,\epsilon) $ which does not rely on the distributions of $X$ and $Y,$ such that
	\begin{equation}
	h\left(\lambda X + \sqrt{1-\lambda^2}Y\right) \geq \lambda^2 h(X) + (1-\lambda^2) h(Y) + \delta.
	\end{equation}
\end{lemma}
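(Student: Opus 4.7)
The plan is to argue by contradiction, using a compactness-plus-continuity scheme that culminates in the classical equality characterization of the entropy jump inequality. Suppose no such $\delta$ existed; then for each $k$ there would be independent pairs $(X_k, Y_k)$ satisfying hypotheses (1)--(3) and $\max\{D(X_k), D(Y_k)\} + |\V(X_k) - \V(Y_k)| \geq \epsilon$, yet with $\triangle_\lambda(X_k, Y_k) \to 0$. The goal is to extract a subsequential limit pair $(X^*, Y^*)$ at which the entropy jump vanishes, invoke the equality case to force $X^*$ and $Y^*$ Gaussian of common variance, and thereby contradict the persistent lower bound $\epsilon$.

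The first step is to set up compactness. Without loss of generality, say $X_k$ attains the mins in (1) and (3), so $J(X_k)\le J_0$ and $L_{X_k}(R)\le l(R)$. Bounded Fisher information plus tail-variance control delivers the usual tightness, Stam-type lower bounds on $h(X_k)$, and an upper bound on $\V(X_k)$; by a standard Sobolev/Rellich argument one extracts a subsequence along which the densities of $X_k$ converge in $L^1$ to the density of some $X^*$ with $J(X^*)\le J_0$ and $L_{X^*}(R)\le l(R)$. Getting compactness for $Y_k$ is more delicate because (1)--(3) give no direct handle on $Y_k$; here I would transfer regularity through the vanishing of $\triangle_\lambda(X_k,Y_k)$: combining $\triangle_\lambda(X_k,Y_k)\to 0$ with the controlled entropy of $X_k$ and the Gaussian maximum-entropy bound $h(\cdot)\le \tfrac12\log(2\pi e\,\V(\cdot))$ forces two-sided bounds on $\V(Y_k)$ and enough tightness of $Y_k$ to produce a weak limit $Y^*$ (possibly after a second subsequence extraction, with a case split according to which of $\V(X_k), \V(Y_k)$ stays bounded below by $a$).

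The second step is the continuity transfer, which is exactly the content of the promised Proposition \ref{continuity on h V KL}: under $L^1$ convergence of densities together with the uniform Fisher-information and tail-variance controls now inherited on the subsequence, the functionals $\V$, $h$, and $\KL(\cdot\,\|\,g)$ are continuous. Applying this to $X_k\to X^*$, $Y_k\to Y^*$, and to the convolution density $f_{X_k}*_\lambda f_{Y_k}\to f_{X^*}*_\lambda f_{Y^*}$, the hypothesis $\triangle_\lambda(X_k,Y_k)\to 0$ passes to $\triangle_\lambda(X^*,Y^*)=0$. The classical equality case of the entropy jump inequality (equivalently the entropy power inequality at equality) then forces $X^*$ and $Y^*$ to be Gaussian with a common variance, so $D(X^*)=D(Y^*)=0$ and $\V(X^*)=\V(Y^*)$. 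Applying the continuity of $D$ and $\V$ once more, the assumption $\max\{D(X_k),D(Y_k)\}+|\V(X_k)-\V(Y_k)|\ge \epsilon$ also passes to the limit, yielding $\max\{D(X^*),D(Y^*)\}+|\V(X^*)-\V(Y^*)|\ge \epsilon>0$, a contradiction.

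The main obstacle, I expect, is the asymmetric form of the hypotheses (min of $J$ and $L$, max of $\V$) combined with the possibility that one of the variances drifts to $0$ or $\infty$: Carlen's original argument sidesteps this by normalizing to unit variance, but that rescaling does not preserve the convolution $\lambda X+\sqrt{1-\lambda^2}Y$ in our setting. Handling this will require a careful case split depending on which variable carries the good Fisher and tail bounds, using $\triangle_\lambda(X_k,Y_k)\to 0$ together with the Gaussian entropy bound to rule out variance collapse or blow-up on the other side. The secondary technical burden lies in establishing Proposition \ref{continuity on h V KL} itself, i.e.\ showing that $L^1$ convergence of densities augmented by uniform tail-variance and Fisher-information control upgrades to simultaneous continuity of $\V$, $h$, and the Gaussian KL divergence.
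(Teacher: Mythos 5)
Your overall skeleton --- compactness of a family of densities, continuity of $h$, $\V$ and $D$ along $L^1$ limits, and the equality case of the entropy jump inequality to rule out a vanishing jump away from the Gaussian--equal--variance diagonal --- is the same mechanism the paper uses (the paper phrases it as ``the continuous functional $\triangle_\lambda$ attains a strictly positive infimum on a compact set,'' rather than as a sequential contradiction, but these are equivalent). The genuine gap is that you skip the Ornstein--Uhlenbeck regularization, and that step is not cosmetic: it is what makes both the compactness and the continuity available. Proposition \ref{continuity on h V KL} is proved only on the smoothed class $\mathcal{H}_{t,l}=\{p_t^*f\}^{cl}$ with $t>0$; its proof of continuity of $h$ requires the two-sided Gaussian bounds $A_{t,l}\,g_{1-e^{-t}}(x)\le q(x)\le C_t$, and the lower bound in particular is false for raw densities with merely bounded Fisher information and tail variance (such densities can vanish on sets, and $-q\log q$ is not controlled pointwise). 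So your ``continuity transfer'' step, including passing $\triangle_\lambda(X_k,Y_k)\to 0$ to the limit through the convolution, is not justified for the unsmoothed sequence. The paper's actual route is: choose $t=t(\epsilon,J_0,a,l)$ so small that applying $p_t^*$ degrades $\max\{D(\cdot),D(\cdot)\}+|\V(\cdot)-\V(\cdot)|$ by at most $4\epsilon/5$ (this uses the Fisher bound via Corollary \ref{integrate Fisher eq entropy} and the variance bound $l(0)$), minimize the continuous functional $\triangle_\lambda$ over the compact set $\{\varphi\ge\epsilon/5\}\subset\mathcal{H}_{t,l}\times\mathcal{H}_{t,l}$ to get $\delta>0$, and then pull the bound back to the original pair via the monotonicity $\triangle_\lambda(f_1,f_2)\ge\triangle_\lambda(p_t^*f_1,p_t^*f_2)$ (Lemma \ref{entropy ineq}). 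Without the smoothing and the quantitative control of how much $t$ can shrink the defect, your limit object $(X^*,Y^*)$ need not satisfy $\triangle_\lambda=0$ nor inherit the lower bound $\epsilon$.

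Your second concern --- the asymmetric $\min$/$\max$ hypotheses --- is well spotted, but the proposed repair does not work. Vanishing of $\triangle_\lambda(X_k,Y_k)$ does give two-sided bounds on $h(Y_k)$ (via $h(\lambda X+\sqrt{1-\lambda^2}Y)\ge\max\{h(X)+\log\lambda,\ h(Y)+\tfrac12\log(1-\lambda^2)\}$), but it yields neither tightness of $Y_k$, nor any bound on $J(Y_k)$, nor tail-variance control, so no $L^1$-compactness for the $Y$ side can be extracted this way. In fact the paper's own proof quietly uses two-sided control: the estimates $V_i\le l(0)$ and $|h(p_t^*f_i)-h(f_i)|\le\int_0^t((J_0-1)e^{-2s}+2)\,ds$ are invoked for \emph{both} $i=1,2$, and membership $(p_t^*f_1,p_t^*f_2)\in\mathcal{H}_{t,l}\times\mathcal{H}_{t,l}$ requires $L_{f_i}\le l$ and $J(f_i)<\infty$ for both. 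This is consistent with how the lemma is applied (Proposition \ref{F and L condition} supplies $J\le J_1$, $L\le l_1$ and $\V\ge a$ for both coordinates on $W_n\times W_n$), so the right fix is to read the hypotheses as two-sided rather than to attempt a regularity transfer through the entropy jump.
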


Before the proof of Lemma \ref{extended Carlen},
we first list some useful results that will be exploited in our proof.

The next lemma shows that Fisher information decreases after convolution. We refer to \cite[Lemma 3.1]{Carlen1991} for its proof.
\begin{lemma}\label{Fisher ineq}
	Let $X_1,\cdots, X_n$ be independent random variables with densities $f_1,\cdots, f_n\in\mathcal{T}$. Suppose $0<\lambda_1,\cdots, \lambda_n<1$ and $\sum\limits_{i=1}^n \lambda_i^2 = 1$,  then
	\begin{equation}
	J\left(\sum\limits_{i=1}^n \lambda_iX_i\right)\leq \sum\limits_{i=1}^n\lambda_i^2J(X_i),
	\end{equation}
	and the equality holds if and only if $f_1, f_2,\cdots, f_n$ are Gaussian densities with the same variance.
\end{lemma}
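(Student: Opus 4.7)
The plan is to use the standard score-function argument. Introduce the score $\rho_i(x) := f_i'(x)/f_i(x)$ for each $X_i$, noting that under the assumptions $\E[\rho_i(X_i)]=0$ and $J(X_i)=\E[\rho_i(X_i)^2]$. Write $Z=\sum_i\lambda_i X_i$ and, for any fixed $i$, change variables so that $X_i$ is expressed as a linear function of $Z$ and the remaining $X_j$'s; differentiating the resulting convolution representation of $f_Z(z)$ under the integral sign yields the key identity
\begin{equation*}
\rho_Z(z) \;=\; \frac{f_Z'(z)}{f_Z(z)} \;=\; \frac{1}{\lambda_i}\,\E[\rho_i(X_i)\mid Z=z],\qquad i=1,\dots,n.
\end{equation*}
This is the workhorse and the step where I expect to spend the most care: one needs enough regularity on $f_i$ to swap the derivative and the integral, which is why the statement restricts to $f_i\in\mathcal{T}$; if needed, one can first mollify by a small Gaussian convolution and pass to the limit.

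Next, take any $a_1,\dots,a_n$ with $\sum_i a_i=1$. Multiplying the $i$-th identity by $a_i$ and summing gives
\begin{equation*}
\rho_Z(z) \;=\; \E\!\left[\sum_{i=1}^n \frac{a_i}{\lambda_i}\rho_i(X_i)\,\Big|\,Z=z\right].
\end{equation*}
Apply the conditional Jensen inequality with the convex function $t\mapsto t^2$, take expectation over $Z$, and expand the square. Independence of the $X_i$'s together with $\E[\rho_i(X_i)]=0$ kills the cross terms, leaving
\begin{equation*}
J(Z) \;\le\; \sum_{i=1}^n \frac{a_i^2}{\lambda_i^2}\,J(X_i).
\end{equation*}
Choosing $a_i=\lambda_i^2$ (which is admissible since $\sum_i\lambda_i^2=1$) gives exactly $J(Z)\le\sum_i\lambda_i^2 J(X_i)$. (A slightly sharper optimization over $a_i$ recovers Stam's reciprocal form, but it is not needed here.)

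For the equality case, equality in conditional Jensen forces $\sum_i \lambda_i\rho_i(X_i)$ to be a.s.\ a (measurable) function of $Z=\sum_i \lambda_i X_i$. Since the $X_i$ are independent, a standard argument (e.g.\ taking two independent copies and subtracting, or a Cauchy-type functional equation argument) shows that each $\rho_i$ must be affine, $\rho_i(x)=-\alpha_i(x-\mu_i)$, which means each $f_i$ is Gaussian. Plugging back into $\sum_i\lambda_i\rho_i(X_i)=\rho_Z(Z)$ and matching coefficients forces the variances $1/\alpha_i$ of the $X_i$'s to coincide. Conversely, if all $f_i$ are Gaussian with the same variance, direct computation gives equality. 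The main obstacle, as noted, is the regularity needed for the score-function identity; modulo a density/mollification argument, the rest is algebra and an application of conditional Jensen.
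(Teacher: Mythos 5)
Your proposal is correct and follows essentially the same route as the paper's source for this lemma: the paper gives no proof of its own but cites Lemma 3.1 of \cite{Carlen1991}, whose argument is exactly this score-function projection — the Blachman-type identity $\rho_Z(z)=\lambda_i^{-1}\E[\rho_i(X_i)\mid Z=z]$, a convex combination with weights $a_i$, conditional Jensen, and the choice $a_i=\lambda_i^2$, with the equality case reduced to the scores being affine. The only points to make airtight in a full write-up are the ones you already flag (justifying differentiation under the integral, which is where $f_i\in\mathcal{T}$ and $J(X_i)<\infty$ enter, noting the inequality is trivial otherwise) and the measurability-forces-affine step in the equality analysis, both of which are standard.
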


The de  Bruijn's identity has been widely quoted in the literature of information theory (\textit{e.g.} see  \cite[Theorem 17.7.2]{cover2006elements}), which connects Fisher information and differential entropy. We write de Bruijn's identity as the following lemma.
\begin{lemma}\label{Bruijn's identity}
	Let $X$ be a continuous random variable with finite variance, $G$ be an independent standard Gaussian random variable. Then
	\begin{equation}
	\frac{d}{dt}h(X+\sqrt{t}G) = \frac{1}{2}J(X+\sqrt{t}G).
	\end{equation}
\end{lemma}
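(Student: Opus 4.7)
The plan is to exploit the fact that for $t>0$ the density of $X+\sqrt{t}G$ is a Gaussian smoothing of the law of $X$, hence satisfies the heat equation and is arbitrarily regular. Write $g_t$ for the density of $\sqrt{t}G$ and set $f_t(x)=\int g_t(x-y)\,dP_X(y)$. Since the Gaussian kernel obeys $\partial_t g_t=\tfrac12\partial_x^2 g_t$, differentiating under the integral (justified by the rapid Gaussian decay of all derivatives of $g_t$) transfers the heat equation to $f_t$. The strategy is then to differentiate $h(X+\sqrt{t}G)=-\int f_t\log f_t\,dx$ in $t$, plug in the heat equation, and integrate by parts once to recover $\tfrac12 J(X+\sqrt{t}G)$.

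First I would establish the regularity needed. On any closed interval $[t_0,t_1]\subset(0,\infty)$, the density $f_t(x)$ and its spatial derivatives $\partial_x f_t$ and $\partial_x^2 f_t$ are pointwise dominated by explicit Gaussian-type expressions obtained by pulling $\partial_x$ inside the convolution defining $f_t$. Combined with the finite variance of $X$, this makes $f_t\log f_t$, $(\partial_x f_t)^2/f_t$, and $\partial_t(f_t\log f_t)$ integrable in $x$ with $t$-uniform majorants on $[t_0,t_1]$. This authorizes differentiation under the integral sign:
\begin{equation*}
\frac{d}{dt}h(X+\sqrt{t}G)=-\int \partial_t f_t\,(\log f_t+1)\,dx.
\end{equation*}
The ``$+1$'' term contributes $-\partial_t\!\int f_t\,dx=0$ since $f_t$ is a probability density for every $t$. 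Substituting $\partial_t f_t=\tfrac12\partial_x^2 f_t$ gives
\begin{equation*}
\frac{d}{dt}h(X+\sqrt{t}G)=-\frac{1}{2}\int \partial_x^2 f_t\,\log f_t\,dx.
\end{equation*}

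Next I would integrate by parts once in $x$. The same tail bounds force $\partial_x f_t\cdot\log f_t\to 0$ as $|x|\to\infty$, so the boundary term vanishes, and
\begin{equation*}
\frac{d}{dt}h(X+\sqrt{t}G)=\frac{1}{2}\int\frac{(\partial_x f_t)^2}{f_t}\,dx=\frac{1}{2}J(X+\sqrt{t}G),
\end{equation*}
which is the claimed identity for every $t>0$. Since the lemma is invoked downstream at strictly positive $t$, this suffices; if one wants the statement at $t=0$ it follows by continuity of both sides as $t\downarrow 0$.

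The main obstacle is purely analytic: making these formal manipulations rigorous without any regularity hypothesis on the law of $X$ beyond finite variance. Everything rests on the Gaussian tail control of $f_t$ and its first two spatial derivatives, which is available precisely because $t>0$ forces a full Gaussian convolution. Once those dominated-convergence majorants and the vanishing of the integration-by-parts boundary term are recorded, the identity drops out in a single line.
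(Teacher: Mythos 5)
The paper does not prove this lemma at all: it simply quotes de Bruijn's identity and cites \cite[Theorem 17.7.2]{cover2006elements}, and your heat-equation/integration-by-parts argument is precisely the standard proof given there, so your proposal is correct and matches the intended source. The only point worth recording carefully is the vanishing of the boundary term $\partial_x f_t\log f_t$: since $|\log f_t(x)|$ can grow like $x^2$, the crude Chebyshev bound $\P(|X|>x/2)=O(x^{-2})$ is not quite enough, and one should use the refinement $x^2\,\P(|X|>x/2)\le 4\,\E[X^2\bm{1}_{\{|X|>x/2\}}]\to 0$ (available under finite variance) to get $\partial_x f_t(x)=o(x^{-2})$; with that noted, the identity holds for all $t>0$, which is all the paper ever uses.
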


The variational formula for KL-divergence, which has also been extensively explored (\textit{e.g.} see \cite[Corollary 4.15]{stephane2013concentration}), will be used several times in our proof. We state it as the next lemma.
\begin{lemma}\label{KL variation}
	For any probability densities $f_1$ and $f_2$,
	\begin{equation}
	\KL(f_1||f_2) = \sup\limits_{\phi} \left(\int_\R f_1(x)\phi(x)dx - \log \int_\R e^{\phi(x)} f_2(x)dx\right),
	\end{equation}
	where the supremum runs over all bounded measurable functions $\phi$.
\end{lemma}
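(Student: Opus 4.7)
The plan is to prove the two inequalities in the supremum separately. For the direction
$$\int_\R f_1(x)\phi(x)\,dx - \log\int_\R e^{\phi(x)} f_2(x)\,dx \;\le\; \KL(f_1\|f_2),$$
which must hold for every bounded measurable $\phi$, I would introduce the tilted probability density
$$f_\phi(x) := \frac{e^{\phi(x)}f_2(x)}{Z_\phi}, \qquad Z_\phi := \int_\R e^{\phi(x)}f_2(x)\,dx,$$
which is well-defined since boundedness of $\phi$ forces $0 < Z_\phi < \infty$. Expanding $\KL(f_1\|f_\phi)$ directly gives the algebraic identity
$$\KL(f_1\|f_\phi) \;=\; \KL(f_1\|f_2) - \int_\R f_1(x)\phi(x)\,dx + \log Z_\phi,$$
and non-negativity of KL-divergence (Gibbs' inequality, which is itself a one-line Jensen argument) immediately yields the desired bound. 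Taking the supremum over bounded $\phi$ then delivers the ``$\ge$'' part of the claimed equality.

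For the reverse direction the natural optimizer is $\phi^\ast := \log(f_1/f_2)$, which formally makes $f_{\phi^\ast} = f_1$ and $Z_{\phi^\ast} = 1$, saturating the previous identity. Since $\phi^\ast$ is generally unbounded, I would proceed by truncation. Assume first that $\KL(f_1\|f_2) < \infty$, which forces $f_1 \ll f_2$; set $\phi_n := \max(-n, \min(n, \log(f_1/f_2)))$ on $\{f_2 > 0\}$ and $\phi_n := 0$ on $\{f_2 = 0\}$. Then $\phi_n$ is bounded measurable, and splitting into positive and negative parts one shows, via monotone convergence on the positive part and dominated convergence on the negative part (dominated in $L^1(f_1)$ by $|\log(f_1/f_2)|$, which is $f_1$-integrable under finite KL), that $\int_\R f_1\phi_n\,dx \to \KL(f_1\|f_2)$. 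Simultaneously $e^{\phi_n} f_2 = (f_1\wedge e^n f_2)\vee (e^{-n} f_2)\mathbf{1}_{\{f_2>0\}}$, whose integral tends to $1$, so $\log Z_{\phi_n}\to 0$. Combining the two limits gives the matching lower bound on the supremum.

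Finally, the case $\KL(f_1\|f_2) = +\infty$ splits into two subcases. If $f_1$ is not absolutely continuous with respect to $f_2$, choose a measurable $B$ with $\int_B f_1 > 0$ and $\int_B f_2 = 0$, and set $\phi = t\mathbf{1}_B$; then $\int f_1\phi - \log\int e^\phi f_2 = t\int_B f_1 \to +\infty$ as $t\to\infty$. Otherwise $f_1\ll f_2$ but $\int f_1\log(f_1/f_2) = +\infty$, in which case the same truncation $\phi_n$ as above makes the lower bound diverge by monotone convergence on the positive part while $\log Z_{\phi_n}$ stays bounded above. The main obstacle is the simultaneous handling of the truncation: verifying that $\log Z_{\phi_n}\to 0$ while $\int f_1\phi_n$ converges to $\KL$, with the negative tail of $\log(f_1/f_2)$ controlled uniformly; this is where the integrability consequence of finite KL has to be invoked carefully. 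No new machinery beyond Jensen's inequality and standard convergence theorems is needed.
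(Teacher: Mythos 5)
Your proof is correct. Note, however, that the paper does not prove this lemma at all: it is quoted as a known result (the Donsker--Varadhan / Gibbs variational formula) with a citation to Corollary 4.15 of Boucheron--Lugosi--Massart, so there is no in-paper argument to compare against. Your self-contained derivation is the standard one and all the delicate points are handled properly: the upper bound via non-negativity of $\KL(f_1\|f_\phi)$ for the tilted density $f_\phi = e^{\phi}f_2/Z_\phi$ (trivial when $\KL(f_1\|f_2)=+\infty$, and the algebraic split is legitimate when it is finite since $\phi$ is bounded); the attainment in the limit via the truncations $\phi_n$, where indeed $e^{\phi_n}f_2 \le f_1 + f_2$ gives the domination needed for $Z_{\phi_n}\to \int_{\{f_2>0\}} f_1 = 1$ under absolute continuity; and the two infinite-$\KL$ subcases. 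One small remark: the negative part $\int f_1\,(\log(f_1/f_2))^{-}\,dx = \int f_1 \log^{+}(f_2/f_1)\,dx \le \int f_2\,dx = 1$ is \emph{always} finite (by $\log^{+}t\le t$), so you do not actually need finiteness of the KL-divergence to control it; this slightly streamlines the passage from the finite to the infinite case, since the same truncation argument then covers both at once whenever $f_1$ is absolutely continuous with respect to $f_2$.
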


Recall that for  $\lambda\in(0,1)$ and independent random variables $X_1,X_2$ with densities $f_1,f_2$, we write $X_1 *_\lambda X_2 = \lambda X_1 + \sqrt{1 - \lambda^2} X_2$, and denote the density of $X_1 *_\lambda X_2$ by $f_1 *_\lambda f_2$. Recall that $g_a$ stands for the Gaussian density with mean 0 and variance $a.$ Barrow from the idea of \cite{Carlen1991}, we will frequently use the tools of Ornstein-Unlenbeck Semigroup, which is defined as follows.
\begin{definition}\label{OU semigroup}
	Set $t\geq 0$,  define the map $p_t^*: L^1 \rightarrow L^1$:
	\begin{equation}
	(p_t^*f)(x) = \int_\R f(y)g_{(1-e^{-2t})}(x-e^{-t}y)dy.
	\end{equation}
\end{definition}
$\{p_t^*:t\geq0\}$ is the Lebesgue adjoint to Ornstein-Unlenbeck semigroup. It is well known that $\{p_t^*:t\geq0\}$ is strongly continuous on $L^1$. Let $X$ be a random variable with density $f\in\mathcal{T}$, we define $p^*_t X$ to be the random variable with density $p^*_t f$. It is not hard to verify that
\begin{equation}\label{OU-prop1}
p^*_t X \overset{d}{=} e^{-t}X + \sqrt{1 - e^{-2t}} G = X *_{e^{-t}} G
\end{equation}
and
\begin{equation}\label{OU-prop2}
(p^*_tf)(x) = \E[g_{(1-e^{-2t})}(x-e^{-t}X)] = f *_{e^{-t}} g,
\end{equation}
where $\overset{d}{=}$ means equal in distribution. From \eqref{OU-prop1} it is easy to verify the distributive law for  $p^*_t$  and $*_\lambda$, \textit{i.e.}, for any $f_1,f_2\in\mathcal{T}$,
\begin{equation}\label{commutation}
(p^*_t f_1) *_\lambda (p^*_t f_2) = p_t^*(f_1*_\lambda f_2).
\end{equation}

Finally, we note here that for a random variable $X$ with density $f$, we usually indistinguishably write $X$ and $f$ when handling with some functionals operating on the distribution of $X$. For example, we write $\V(f) = \V (X),\ h(f)=h(X), L_f(R) = L_X(R), \triangle_\lambda(f_X,f_Y) = \triangle_\lambda(X,Y)$ and so on.

We now start to prove our Key Lemma. We begin with  defining some functional spaces. Recall that in this paper $l$ always  stands for a decreasing function $l(R)$ such that $l(R) \rightarrow0$ as $R\rightarrow\infty$.
\begin{definition}
	For fixed $t\geq0$ and function $l$,  define
	\begin{align}
	&\mathcal{H}_{t, l} = \{p_t^*f:\ f\in\mathcal{T}, \E(f) = 0, L_f \leq l,\ J(f)<\infty  \}^{cl},\\
	&\mathcal{B}_{t} = \{p_t^*f:\ f\in\mathcal{T}, \E(f) = 0,\ J(f)<\infty \}^{cl},
	\end{align}
	where $\mathcal{T}$ is defined as \eqref{T}$, ''cl''$ denotes the closure in $L^1_{1+x^2}$. In addition,  we consider $\mathcal{H}_{t, l}$ and $\mathcal{B}_{t}$ as subspaces of $L^1_{1+x^2}$,  equipped with its topology.
\end{definition}
The next two propositions aim to establish some regularity for functions lying in $\mathcal{H}_{t,l}$ and $\mathcal{B}_t$.
\begin{proposition}\label{bound on H and B}
	For $t > 0$ and function $l$,
	\begin{enumerate}
		\item
		\begin{equation}\label{proposition 3.1 1}
		\sup\limits_{x\in\R} q(x) \leq \frac{1}{\sqrt{2\pi(1-e^{-2t})}}\ , \ \forall  q\in\mathcal{B}_t.
		\end{equation}
		\item There exists a constant $B_t>0$ such that
		\begin{equation}\label{proposition 3.1 2}
		\frac{[q'(x)]^2}{q(x)} \leq B_t[p_{2t}^*(e^tX)](x)\ , \ \forall q\in\mathcal{B}_t,
		\end{equation}
		where $X$ denotes the random variable with density $f$,  $q = p_t^*f$.
		\item There exists a constant $A_{t, l}>0$ such that
		\begin{equation}
		q(x) \geq A_{t, l} g_{(1-e^{-t})}(x)\ , \ \forall q\in\mathcal{H}_{t, l}.
		\end{equation}
	\end{enumerate}
\end{proposition}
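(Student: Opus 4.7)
The plan is to establish each bound first for explicit representatives $q = p_t^* f$ with $f$ satisfying the listed density hypotheses, and then extend to the $L^1_{1+x^2}$-closure by a.e.\ convergence along a subsequence, which is available because $\|\cdot\|_{L^1}\le \|\cdot\|_{L^1_{1+x^2}}$. Part (1) will be immediate from the convolution representation
\[
q(x) = \int_\R f(y)\, g_{(1-e^{-2t})}(x - e^{-t} y)\, dy,
\]
by bounding the Gaussian kernel pointwise by its maximum $\frac{1}{\sqrt{2\pi(1-e^{-2t})}}$ and using $\int f = 1$.

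For part (2) I would differentiate under the integral sign to obtain
\[
q'(x) = -\int_\R f(y)\, \frac{x-e^{-t}y}{1-e^{-2t}}\, g_{(1-e^{-2t})}(x-e^{-t}y)\, dy,
\]
and then apply Cauchy--Schwarz, regarding the integrand as $\sqrt{f g}\cdot\sqrt{f g}\cdot\tfrac{x-e^{-t}y}{1-e^{-2t}}$, to get
\[
\frac{q'(x)^2}{q(x)} \le \int_\R f(y)\, \frac{(x-e^{-t}y)^2}{(1-e^{-2t})^2}\, g_{(1-e^{-2t})}(x-e^{-t}y)\, dy.
\]
A direct Gaussian comparison gives $\frac{z^2}{(1-e^{-2t})^2}\, g_{(1-e^{-2t})}(z) \le B_t\, g_{(1-e^{-4t})}(z)$ uniformly in $z$, since $z^2 e^{-cz^2}$ is bounded for any $c>0$ and the positive gap $\frac{1}{1-e^{-2t}}-\frac{1}{1-e^{-4t}}$ supplies the needed decaying exponential. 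Substituting and then recognising, via the change of variable $u = e^{-t}y$ together with Definition \ref{OU semigroup}, that $\int f(y)\, g_{(1-e^{-4t})}(x-e^{-t}y)\, dy = p_{2t}^*(e^t X)(x)$ completes the proof.

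For part (3), Chebyshev applied to the tail-variance bound yields $\P(|Y|\ge R) \le L_f(R)/R^2 \le l(R)/R^2$. Restricting the convolution to $|y|\le R$ and using the worst-case lower bound $g_{(1-e^{-2t})}(x-e^{-t}y) \ge \frac{1}{\sqrt{2\pi(1-e^{-2t})}}\exp\bigl(-(|x|+e^{-t}R)^2/(2(1-e^{-2t}))\bigr)$ on this region gives
\[
q(x) \ge \frac{1-l(R)/R^2}{\sqrt{2\pi(1-e^{-2t})}}\, \exp\!\left(-\frac{(|x|+e^{-t}R)^2}{2(1-e^{-2t})}\right).
\]
A short algebraic manipulation provides
\[
\frac{x^2}{2(1-e^{-t})} - \frac{(|x|+e^{-t}R)^2}{2(1-e^{-2t})} = \frac{e^{-t}}{2(1-e^{-2t})}\bigl((|x|-R)^2 - (1+e^{-t})R^2\bigr) \ge -\frac{e^{-t}R^2}{2(1-e^{-t})}
\]
uniformly in $x$, so fixing any $R$ with $l(R)/R^2\le 1/2$ and choosing $A_{t,l}$ small enough delivers the claim.

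The main obstacle is the extension of these pointwise bounds from the dense subset to the $L^1_{1+x^2}$-closure, particularly in (2) where $q'$ appears. I would handle this by noting that $L^1_{1+x^2}$-convergence of $p_t^* f_n$ forces the measures $f_n\, dy$ to have uniformly bounded second moments and be tight; any weak limit $\mu$ is a probability measure with mean zero, the limit $q$ coincides with $p_t^*\mu$ (which is $C^\infty$), and the three estimates, being pointwise manipulations of the Gaussian kernel integrated against a probability measure, transfer directly to this representation. The condition on $L_\mu$ in (3) passes to the limit because $L_{(\cdot)}(R)$ is lower semicontinuous in the measure at continuity points of $|Y|$, which suffices after a mild adjustment of $l$.
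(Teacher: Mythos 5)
Your proof is correct and follows essentially the same route as the paper's: the same convolution representation and kernel bound for (1), the same differentiation--Cauchy--Schwarz step followed by a Gaussian-ratio comparison (the paper's $U_t(x)=x^2g_{1-e^{-2t}}(x)/g_{1-e^{-4t}}(x)$) for (2), and the same truncation-plus-kernel-lower-bound for (3), which you carry out by explicit algebra where the paper invokes the abstract infimum $\alpha(s,t,M)$. Your handling of the passage to the $L^1_{1+x^2}$-closure via weak limits of the underlying measures is, if anything, more careful than the paper's one-line parenthetical remark.
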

\begin{proposition}\label{L of HtL}
	Given $l$, there exists a function $l'$ decreasing to 0 such that
	\begin{equation}
	L_q \leq l'\ ,\ \forall q \in \mathcal{H}_{t,l}.
	\end{equation}
\end{proposition}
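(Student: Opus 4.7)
The plan is to first prove the tail-variance bound on the generating set $\mathcal{H}^0_{t,l} := \{p_t^* f : f \in \mathcal{T},\, \E f = 0,\, L_f \leq l,\, J(f) < \infty\}$ by a direct estimate on $Z := p_t^* X$, and then extend the bound to the closure $\mathcal{H}_{t,l}$ via a continuity argument exploiting the moment control built into the $L^1_{1+x^2}$-topology. The final $l'$ can be taken to be $l'(R) := \sup_{q \in \mathcal{H}_{t,l}} L_q(R)$, which is automatically non-increasing in $R$; it remains to show $l'(R) \to 0$.

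For the direct step, pick $q = p_t^* f \in \mathcal{H}^0_{t,l}$ and write $Z = e^{-t}X + \sqrt{1-e^{-2t}}G$ with $X \sim f$ independent of a standard Gaussian $G$, so $Z$ has density $q$ and mean $0$. The elementary inequality $Z^2 \leq 2e^{-2t}X^2 + 2(1-e^{-2t})G^2$ together with the triangle-inequality inclusion $\{|Z|\geq R\} \subset \{|X|\geq Re^t/2\} \cup \{|G|\geq R/(2\sqrt{1-e^{-2t}})\}$ and the independence of $X$ and $G$ give
\begin{align*}
L_q(R) \leq\;& 2e^{-2t}\bigl(L_f(Re^t/2) + l(0)\,\P(|G|\geq R/(2\sqrt{1-e^{-2t}}))\bigr) \\
&+ 2(1-e^{-2t})\bigl(\P(|X|\geq Re^t/2) + \E[G^2 \mathbf{1}_{|G|\geq R/(2\sqrt{1-e^{-2t}})}]\bigr),
\end{align*}
where I use $\E X^2 = L_f(0) \leq l(0)$. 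Each of the four summands is bounded by a decreasing function of $R$ depending only on $t$ and $l$ and vanishing as $R\to\infty$: $L_f(Re^t/2) \leq l(Re^t/2)$; the Gaussian tail probability and Gaussian tail moment decay exponentially in $R^2$; and $\P(|X|\geq Re^t/2) \leq 4l(0)/(R^2 e^{2t})$ by Chebyshev. Summing produces a decreasing $\tilde l(R)\to 0$ with $L_q \leq \tilde l$ on all of $\mathcal{H}^0_{t,l}$.

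For the closure step, let $q_n = p_t^* f_n \in \mathcal{H}^0_{t,l}$ converge to $q$ in $L^1_{1+x^2}$. Because $|x|, x^2 \leq 1 + x^2$, the means $\mu_n := \E q_n$ converge to $\mu := \E q$, and similarly for the second moments. Fix $R > 0$, pick $\epsilon \in (0,R)$, and take $n$ large enough that $|\mu - \mu_n| \leq \epsilon$; then the inclusion $\{|x-\mu|\geq R\} \subset \{|x-\mu_n|\geq R - \epsilon\}$ together with $(x-\mu)^2 \leq 2(x-\mu_n)^2 + 2\epsilon^2$ and Chebyshev's inequality give
\begin{equation*}
\int (x-\mu)^2 \mathbf{1}_{|x-\mu|\geq R}\, q_n(x)\,dx \;\leq\; \Bigl(2 + \tfrac{2\epsilon^2}{(R-\epsilon)^2}\Bigr)\tilde l(R-\epsilon).
\end{equation*}
On the other hand, the difference between this integral and $L_q(R)$ is dominated by $(2+2\mu^2)\int (1+x^2)|q - q_n|\,dx \to 0$, because $(x-\mu)^2 \mathbf{1}_{|x-\mu|\geq R} \leq (2+2\mu^2)(1+x^2)$. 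Sending $n \to \infty$ (with $\epsilon = 1$, say) yields $L_q(R) \leq (2 + 2/(R-1)^2)\,\tilde l(R-1)$ for $R > 1$, which tends to $0$ as $R \to \infty$. Thus the supremum defining $l'$ is finite and decays to $0$, and by construction $L_q \leq l'$ on all of $\mathcal{H}_{t,l}$.

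The main obstacle lies in this closure step: the functional $q \mapsto L_q(R)$ is neither continuous nor semi-continuous in any obvious topology, because it couples a translation by the (moving) mean of $q$ with a discontinuous indicator at the level $|x - \mu| = R$. The $L^1_{1+x^2}$-topology is, however, strong enough to simultaneously control the mean, the variance, and all integrals against functions of polynomial growth at most $2$; the two-step trick (first replace $\mu$ by $\mu_n$ at the cost of widening the indicator margin by $\epsilon$, then transfer the integral from $q$ to $q_n$ by weighted $L^1$-convergence) is precisely what this topology enables.
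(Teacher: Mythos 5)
Your proof is correct and follows essentially the same route as the paper's: the same splitting $Z^2 \leq 2e^{-2t}X^2 + 2(1-e^{-2t})G^2$ combined with the union bound on $\{|Z|\geq R\}$, independence to factor the cross terms, and Chebyshev plus Gaussian tail decay to make each of the four pieces a vanishing, decreasing function of $R$ depending only on $t$ and $l$. The only difference is that you explicitly carry the bound from the generating set to its $L^1_{1+x^2}$-closure, whereas the paper disposes of the closure with a one-line parenthetical (pass to an a.e.-convergent subsequence and use dominated convergence); your two-step argument there is sound and simply makes explicit a detail the paper leaves implicit.
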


The proofs of Proposition \ref{bound on H and B} and Proposition \ref{L of HtL} are given in Appendix \ref{proof of bound on H and B} and \ref{proof of L of HtL}, respectively. The two propositions above are exploited to prove the compactness of $\mathcal{H}_{t,l}$ as next lemma.
\begin{lemma}\label{compactness of HtL}
	Given $t>0$ and function $l$, $\mathcal{H}_{t, l}$ is compact in $L^1_{1+x^2}$.
\end{lemma}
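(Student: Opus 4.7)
The plan is to exhibit, for an arbitrary sequence $\{q_n\}\subset \mathcal{H}_{t,l}$, a subsequence converging in $L^1_{1+x^2}$. Since $\mathcal{H}_{t,l}$ is defined as a closure in $L^1_{1+x^2}$, the limit automatically belongs to $\mathcal{H}_{t,l}$, giving sequential compactness. The strategy combines (i) Arzelà--Ascoli on compact intervals, using the pointwise estimates in Proposition \ref{bound on H and B}, with (ii) the uniform tail control provided by Proposition \ref{L of HtL} to upgrade from local uniform convergence to global convergence in $L^1_{1+x^2}$.

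First I would extract equi-regularity. By Proposition \ref{bound on H and B} (1), every $q_n$ is uniformly bounded by $M_t=1/\sqrt{2\pi(1-e^{-2t})}$. To bound the derivatives, I write $q_n=p_t^*f_n$ and apply Proposition \ref{bound on H and B} (2) to get
\begin{equation*}
[q_n'(x)]^2 \leq B_t\, q_n(x)\,[p_{2t}^*(e^t X_n)](x),
\end{equation*}
where $X_n$ has density $f_n$. The density $p_{2t}^*(e^tX_n)$ lies in $\mathcal{B}_{2t}$ (the law of $e^tX_n$ has zero mean and finite Fisher information $e^{-2t}J(f_n)$), hence is bounded by $M_{2t}=1/\sqrt{2\pi(1-e^{-4t})}$ by Proposition \ref{bound on H and B} (1) again. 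Combined with the bound on $q_n$ itself, this yields $\sup_n\|q_n'\|_\infty \leq \sqrt{B_tM_tM_{2t}}$, so $\{q_n\}$ is equi-Lipschitz on $\R$.

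Next, Arzelà--Ascoli on each interval $[-R,R]$ together with a diagonal extraction furnishes a subsequence (still denoted $q_n$) converging uniformly on every compact subset of $\R$ to a continuous function $q$. The uniform bound $\|q_n\|_\infty\leq M_t$ transfers to $q$, so $q\in L^\infty$. At this point the main obstacle is to pass from local uniform convergence to convergence in $L^1_{1+x^2}$; without tail control the $(1+x^2)$ weight could blow up the difference even though $q_n\to q$ locally. This is where Proposition \ref{L of HtL} enters: it supplies a single function $l'\downarrow 0$ with $L_{q_n}\leq l'$ for all $n$, and since $q_n$ has mean $0$ (as $p_t^*$ preserves zero mean), this reads $\int_{|x|>R} x^2 q_n(x)\,dx \leq l'(R)$ and, by Chebyshev, $\int_{|x|>R} q_n(x)\,dx \leq l'(R)/R^2$. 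Applying Fatou's lemma to the pointwise limit yields the same tail bounds for $q$.

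Finally I would split
\begin{equation*}
\int |q_n-q|(1+x^2)\,dx = \int_{|x|\leq R} |q_n-q|(1+x^2)\,dx + \int_{|x|>R}|q_n-q|(1+x^2)\,dx.
\end{equation*}
The first integral tends to $0$ as $n\to\infty$ by uniform convergence on $[-R,R]$; the second is bounded by $2\bigl(l'(R)+l'(R)/R^2\bigr)$ uniformly in $n$. Sending $n\to\infty$ then $R\to\infty$ produces the desired $L^1_{1+x^2}$-convergence, proving sequential compactness of $\mathcal{H}_{t,l}$ and hence the lemma. The chief delicate point is verifying that the auxiliary density $p_{2t}^*(e^tX_n)$ genuinely lies in $\mathcal{B}_{2t}$ so that Proposition \ref{bound on H and B} can be invoked a second time; once this is in place, the rest reduces to standard compactness arguments enhanced by the tail control from Proposition \ref{L of HtL}.
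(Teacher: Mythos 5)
Your proposal is correct and follows essentially the same route as the paper: uniform boundedness and equicontinuity from Proposition \ref{bound on H and B} to get compactness on each $[-R,R]$ via Arzel\`a--Ascoli, combined with the uniform tail bound $l'$ from Proposition \ref{L of HtL} to control the $(1+x^2)$-weighted tails. The only difference is presentational — you extract a convergent subsequence by a diagonal argument (sequential compactness), while the paper builds a finite $\epsilon$-net (total boundedness plus closedness) — and these are equivalent in the metric space $L^1_{1+x^2}$.
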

\pf For any $\epsilon >0$, we show that $\mathcal{H}_{t, l}$ has a finite $\epsilon-$net in $L^1_{1+x^2}$. By Proposition \ref{L of HtL} we can find $R > 1$ such that
\begin{equation}
\int_{|x|>R} (1+x^2)q(x)dx \leq 2\int_{|x|>R} x^2q(x)dx \leq 2l'(R)\leq  \frac{\epsilon}{4}\ , \ \forall q\in\mathcal{H}_{t, l}.
\end{equation}

Proposition \ref{bound on H and B} implies that the functions in $\mathcal{B}_t$ are uniformly bounded and equicontinuous. Since $\mathcal{H}_{t, l}\subset\mathcal{B}_t$, by Arzel\`{a}-Ascoli theorem we conclude that $\mathcal{H}_{t, l}$ is compact in $C([-R, R])$. As a result, there exists a finite $\epsilon/4(R + \frac{R^3}{3})-$net of $\mathcal{H}_{t, l}$ in $C([-R, R])$, which we denote by $W$.

We claim that $W$ is a finite $\epsilon-$net of $\mathcal{H}_{t, l}$ in $L^1_{1+x^2}$. In fact, For any $\ q\in\mathcal{H}_{t, l}$, there exists $q_\epsilon\in W(\subset \mathcal{H}_{t, l})$ such that
\begin{equation}
\max\limits_{x\in[-R, R]}|q(x) - q_\epsilon(x)| \leq \frac{\epsilon}{4(R+\frac{R^3}{3})}.
\end{equation}
Therefore,
\begin{equation}
\begin{aligned}
&\|q - q_\epsilon\|_{L^1_{1+x^2}} = \int_{|x| > R}(1+x^2)|q(x) - q_\epsilon(x)|dx + \int_{|x| \leq R}(1+x^2)|q(x) - q_\epsilon(x)|dx \\
&\leq \int_{|x| > R}(1+x^2)q(x)dx + \int_{|x| > R}(1+x^2)q_\epsilon(x)dx + \frac{\epsilon}{4(R+\frac{R^3}{3})}\int_{|x| \leq R}(1+x^2)dx\\
& \leq \frac{\epsilon}{4}+\frac{\epsilon}{4}+\frac{\epsilon}{2} = \epsilon.
\end{aligned}
\end{equation} \e

Next proposition shows the continuity of differential entropy, variance and KL-divergence from Gaussian.
\begin{proposition}\label{continuity on h V KL}
	Let $t>0$ and function $l$ be given. Restricted on $\mathcal{H}_{t, l}$, the following functionals are continuous.
	\begin{enumerate}
		\item The differential entropy $h(f)$.
		\item The variance $\V(f)$.
		\item The KL-divergence from Gaussian $D(f)$ .
	\end{enumerate}
\end{proposition}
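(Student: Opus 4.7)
My plan is to derive part (3) as a consequence of parts (1) and (2), and to use the uniform pointwise bounds supplied by Proposition \ref{bound on H and B} as the common engine for all three parts. The upper bound $q(x) \leq M_t := (2\pi(1-e^{-2t}))^{-1/2}$ valid on $\mathcal{B}_t$, combined with the Gaussian lower bound $q(x) \geq A_{t,l}\, g_{(1-e^{-t})}(x)$ valid on $\mathcal{H}_{t,l}$, is what will trap the integrand of $h(q)$ uniformly in $q$.

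Continuity of $\V$ is the easy piece: since $|x| \leq 1+x^2$ and $x^2 \leq 1+x^2$, convergence $q_n \to q$ in $L^1_{1+x^2}$ immediately forces $\int x\, q_n\,dx \to \int x\, q\,dx$ and $\int x^2 q_n\,dx \to \int x^2 q\,dx$, hence $\V(q_n) \to \V(q)$. Continuity of $D$ then follows from the standard identity
\[
D(q) \;=\; \tfrac{1}{2}\log\bigl(2\pi e\, \V(q)\bigr) - h(q),
\]
obtained by expanding the KL divergence against the Gaussian with matching mean and variance. On $\mathcal{H}_{t,l}$ every $q = p_t^* f$ satisfies $\V(q) = e^{-2t}\V(f) + (1-e^{-2t}) \geq 1 - e^{-2t} > 0$, so $\log \V(q)$ depends continuously on $q$, and combining with continuity of $h$ yields continuity of $D$.

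The heart of the argument is the continuity of $h$. The sandwich bound above gives $|\log q(x)| \leq C_1 + C_2 x^2$ for constants depending only on $t$ and $l$, whence $|q(x)\log q(x)| \leq q(x)(C_1 + C_2 x^2)$. Convergence $q_n \to q$ in $L^1_{1+x^2}$ therefore implies $\int q_n(C_1 + C_2 x^2)\,dx \to \int q(C_1 + C_2 x^2)\,dx$. To conclude $h(q_n) \to h(q)$, I would use a subsequence argument: from an arbitrary subsequence of $\{q_n\}$ extract a further subsequence converging almost everywhere to $q$ (which exists because $L^1_{1+x^2}$-convergence implies $L^1$-convergence, hence admits an a.e.\ convergent subsequence), and then invoke Pratt's generalized dominated convergence theorem with dominating functions $q_{n_k}(C_1 + C_2 x^2)$ whose integrals converge to that of $q(C_1 + C_2 x^2)$. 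Uniqueness of the limit then upgrades this to convergence of the full sequence.

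The main obstacle is precisely this last step, that is, the uniform control of $-\log q$ in the tails: without the Gaussian lower bound from Proposition \ref{bound on H and B}(3), the integrand of $h$ cannot be dominated and the argument collapses. This is the point where the Ornstein--Uhlenbeck smoothing built into the definition of $\mathcal{H}_{t,l}$, together with the uniform tail-variance bound inherited from the function $l$ (via Proposition \ref{L of HtL}), is indispensable; both restrictions in the definition of $\mathcal{H}_{t,l}$ are used only here, and it is the reason the proposition is stated on $\mathcal{H}_{t,l}$ rather than on the larger set $\mathcal{B}_t$.
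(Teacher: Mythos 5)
Your proposal is correct and follows essentially the same route as the paper: both rest on the two-sided bound $|\log q(x)|\le C(1+x^2)$ supplied by Proposition \ref{bound on H and B}, an a.e.-convergent-subsequence plus dominated-convergence argument for the entropy, the elementary moment estimate for $\V$ under $L^1_{1+x^2}$-convergence, and the identity $D=\frac{1}{2}\log(2\pi e\V)-h$ for the KL part. The only cosmetic difference is that you invoke Pratt's generalized dominated convergence theorem with the varying dominants $q_n(C_1+C_2x^2)$, whereas the paper splits $h(q_n)-h(q)$ into a term dominated by the fixed function $2Aq(x)(1+x^2)$ plus a remainder bounded directly by $A\|q_n-q\|_{L^1_{1+x^2}}$.
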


We include the proof of Proposition \ref{continuity on h V KL} in Appendix \ref{proof of continuity on h V KL}.

\begin{lemma}\label{continuity on entropy}
	For any $f\in\mathcal{T}$, the function $t\mapsto h(p_t^* f)$ is continuous on $[0, \infty)$,   differentiable on $(0, \infty)$,  and
	$$\lim\limits_{t\rightarrow\infty} h(p_t^*f) = h(g) = \frac{1}{2}\log(2\pi e)\ \ ,\ \ \frac{d}{dt}h(p_t^* f) = J(p_t^*f) - 1.\ \ $$
\end{lemma}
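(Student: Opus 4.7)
The idea is to convert the Ornstein--Uhlenbeck evolution into pure Gaussian smoothing via a scaling argument, and then invoke de Bruijn's identity. Writing $p_t^* X \overset{d}{=} e^{-t}\bigl(X + \sqrt{e^{2t}-1}\,G\bigr)$ and using the scaling identity $h(cZ) = h(Z) + \log c$ for $c>0$, I get
\begin{equation*}
h(p_t^* f) = h\bigl(X + \sqrt{u(t)}\,G\bigr) - t, \qquad u(t) := e^{2t}-1.
\end{equation*}
Lemma \ref{Bruijn's identity} gives $\frac{d}{du}h(X+\sqrt{u}\,G) = \tfrac{1}{2}J(X+\sqrt{u}\,G)$, and the chain rule with $u'(t)=2e^{2t}$, combined with the Fisher-information scaling $J(cZ)=J(Z)/c^2$ (which yields $J(p_t^*X)=e^{2t}J(X+\sqrt{u(t)}\,G)$ since $p_t^*X = e^{-t}(X+\sqrt{u(t)}\,G)$), produces
\begin{equation*}
\frac{d}{dt}h(p_t^*f) = J(p_t^*f)-1, \qquad t > 0.
\end{equation*}
This simultaneously settles differentiability on $(0,\infty)$, the derivative formula, and continuity on $(0,\infty)$.

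For the limit as $t\to\infty$, I would argue from $p_t^*X \to G$: because the Gaussian component dominates, the tail-control hypothesis required by Proposition \ref{continuity on h V KL} becomes automatic, so one can fix a small $t_0>0$ and a decreasing function $l$ such that $p_t^*f \in \mathcal{H}_{t_0,l}$ for all sufficiently large $t$. Direct computation of the convolution $p_t^* f = f \ast_{e^{-t}} g$ shows that $p_t^*f \to g$ pointwise and in $L^1_{1+x^2}$ (the latter using the variance bound $\V(p_t^*X) = e^{-2t}\V(X) + (1-e^{-2t}) \to 1$ as uniform integrability), so the continuity statement of Proposition \ref{continuity on h V KL} gives $h(p_t^*f) \to h(g) = \tfrac{1}{2}\log 2\pi e$.

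The remaining delicate point, and the main obstacle, is continuity at $t=0$. When $J(f)<\infty$, applying Lemma \ref{Fisher ineq} to the independent summands $e^{-s}X$ and $\sqrt{1-e^{-2s}}\,G$ bounds $J(p_s^*f) \leq e^{-2s}J(f) + (1-e^{-2s})$, so the identity $h(p_t^*f) - h(f) = \int_0^t(J(p_s^*f)-1)\,ds$ together with dominated convergence yields $h(p_t^*f) \to h(f)$ as $t \downarrow 0$. When $J(f)=\infty$, however, $J(p_s^*f)$ can blow up near zero (a Stam-type argument gives only $J(p_s^*f) \lesssim 1/(1-e^{-2s})$), so the direct integration argument fails; one must instead approximate $f$ in $L^1$ by mollified densities of finite Fisher information, apply the continuous formula to each approximant, and pass to the limit using strong $L^1$-continuity of $\{p_t^*\}$ together with lower semi-continuity of $h$. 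This approximation step is the only really subtle part of the argument.
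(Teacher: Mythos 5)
Your computation of the derivative via the scaling identity and de Bruijn's identity is exactly the paper's argument, and your route to $\lim_{t\to\infty}h(p_t^*f)=h(g)$ through Proposition \ref{continuity on h V KL} is workable, though heavier than the paper's, which simply squeezes $h(p_t^*f)$ between $h(g)+\frac{1}{2}\log(1-e^{-2t})$ and $\frac{1}{2}\log\left(2\pi e\V(p_t^*f)\right)$. The genuine gap is continuity at $t=0$, which you correctly identify as the crux but do not actually prove.

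For $J(f)<\infty$: the identity $h(p_t^*f)-h(f)=\int_0^t(J(p_s^*f)-1)\,ds$ is not available to you at this stage; it is Corollary \ref{integrate Fisher eq entropy}, which the paper derives \emph{from} this lemma. What the derivative formula on $(0,\infty)$ plus your bound on $J(p_s^*f)$ actually yields is $h(p_t^*f)-\lim_{\epsilon\downarrow0}h(p_\epsilon^*f)=\int_0^t(J(p_s^*f)-1)\,ds$; the limit exists and is $\geq h(f)$ by the easy bound $h(p_\epsilon^*f)\geq h(f)-\epsilon$, but identifying it with $h(f)$, i.e. $\limsup_{t\downarrow0}h(p_t^*f)\leq h(f)$, is precisely the claim, so your argument is circular. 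For $J(f)=\infty$ the situation is worse: ``lower semi-continuity of $h$'' is not a property you can invoke (differential entropy is neither lower nor upper semicontinuous under bare $L^1$ convergence; what holds, under moment and boundedness control, is \emph{upper} semicontinuity of $h$, equivalently lower semicontinuity of the divergence), and your approximation scheme would additionally need $h(f_k)\to h(f)$ for the mollified approximants, which $L^1$ convergence does not provide. The missing ingredient --- and the paper's actual mechanism --- is the variational formula for KL-divergence (Lemma \ref{KL variation}): choose a bounded $\phi$ that is $\epsilon$-optimal for $D(f)=\KL(f\|g_{V_0})$, use the same $\phi$ to lower-bound $D(p_t^*f)$, and let $t\to0$ using $\|p_t^*f-f\|_{L^1}\to0$ and $V_t\to V_0$ to obtain $\liminf_{t\to0}D(p_t^*f)\geq D(f)-\epsilon$, hence $\limsup_{t\to0}h(p_t^*f)\leq h(f)+\epsilon$. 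You need this, or an equivalent lower-semicontinuity statement for relative entropy, to close the proof.
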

\pf Notice that $\{p_t^*,t\geq0\}$ is a strongly continuous semigroup,  thus it is enough to show the continuity at $t=0$. Let $X$ be the random variable with density $f$. On the one hand,
\begin{equation}
h(p_t^* f) = h(e^{-t}X+\sqrt{1-e^{-2t}}G) = -t + h(X + \sqrt{e^{2t}-1}G)\geq -t + h(X),
\end{equation}
which implies $\liminf\limits_{t\rightarrow0}h(p_t^* f) \geq h(X) = h(f).$

On the other hand, for any $\epsilon >0$, Lemma \ref{KL variation} implies that there exists a bounded measurable function $\phi(x)$ such that
\begin{equation}
\frac{1}{2}\log(2\pi e\V(f)) - h(f) = D(f) \leq \epsilon + \int_\R f(x)\phi(x)dx - \log\int_\R e^{\phi(x)}g_{\V(f)}(x)dx.
\end{equation}
Let $V_t = e^{-2t}\V(f) + 1-e^{-2t}$, use Lemma \ref{KL variation} again and we obtain
\begin{equation}\label{upper of h}
\begin{aligned}
&\frac{1}{2}\log(2\pi eV_t) - h(p_t^*f) \geq \int_\R \phi(x)(p_t^*f)(x)dx - \log\int_\R e^{\phi(x)}g_{V_t}(x)dx \\
& = \int_R \phi(x)[(p_t^*f)(x)-f(x)]dx + \int_\R f(x)\phi(x)dx- \log\int_\R e^{\phi(x)}g_{V_0}(x)dx\\
&\hspace{3em} + \log\int_\R e^{\phi(x)}g_{V_0}(x)dx-\log\int_\R e^{\phi(x)}g_{V_t}(x)dx\\
&\geq \frac{1}{2}\log(2\pi eV_0) - h(f)-\epsilon - \|\phi\|_\infty\|p_t^*f-f\|_{L^1} \\
&\hspace{3em}+ \log\int_\R e^{\phi(x)}g_{V_0}(x)dx-\log\int_\R e^{\phi(x)}g_{V_t}(x)dx.
\end{aligned}
\end{equation}
Since $\sqrt{V_t}G$ converges weakly to $\sqrt{V_0}G$,
\begin{equation}
\int_\R e^{\phi(x)}g_{V_t}(x)dx = \E e^{\phi(\sqrt{V_t}G)} \xrightarrow{t\rightarrow0}
\E e^{\phi(\sqrt{V_0}G)} = \int_\R e^{\phi(x)}g_{V_0}(x)dx.
\end{equation}
As a result, we have $\limsup\limits_{t\rightarrow0}h(p_t^*f) \leq h(f) + \epsilon$. This completes the proof of the continuity at $t=0$.

For the second part of Lemma \ref{continuity on entropy}, since
\begin{equation}
\begin{aligned}
&h(p_t^*f) = h(e^{-t} + \sqrt{1-e^{-2t}}G) \geq h(\sqrt{1-e^{-2t}}G) = h(g) + \frac{1}{2}\log(1-e^{-2t}),
\\
&h(p_t^*f)\leq \frac{1}{2}\log(2\pi e\V(p_t^*f)) =  \frac{1}{2}\log(2\pi e(e^{-2t}\V(f)+1-e^{-2t})),
\end{aligned}
\end{equation}
we have $\lim\limits_{t\rightarrow\infty} h(p_t^*f) = h(g).$

Finally, according to Lemma \ref{Bruijn's identity},
\begin{equation}
\begin{aligned}
\frac{d}{dt}h(p_t^*f) &= \frac{d}{dt}(-t + h(X + \sqrt{e^{2t}-1}G)) = -1 + \frac{1}{2}J(X + \sqrt{e^{2t}-1}G)2e^{2t} \\
&= -1+ J(e^{-t}X+\sqrt{1-e^{-2t}}G) = J(p_t^*f)-1,
\end{aligned}
\end{equation}
which completes the whole proof.\e

From Lemma \ref{continuity on entropy} we obtain the following useful corollary.
\begin{corollary}\label{integrate Fisher eq entropy}
	Suppose $f\in\mathcal{T}$ and $J(f) < \infty$,  then
	\begin{equation}
	h(p_t^*f) = h(f) + \int_0^t [J(p_s^*f)-1]ds.
	\end{equation}
\end{corollary}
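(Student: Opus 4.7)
The plan is to integrate the derivative identity $\frac{d}{dt} h(p_t^* f) = J(p_t^*f) - 1$ supplied by Lemma \ref{continuity on entropy} by applying the fundamental theorem of calculus on $[\epsilon, t]$ and then letting $\epsilon \to 0^+$. Two ingredients are needed: first, a uniform bound on $J(p_s^*f)$ near $s = 0$ so that the integrand is $L^1$ on $[0, t]$ and the map $s \mapsto h(p_s^* f)$ is absolutely continuous on each $[\epsilon, t]$; second, the continuity of $h(p_\epsilon^* f) \to h(f)$ as $\epsilon \to 0^+$, which is already part of Lemma \ref{continuity on entropy}.

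For the Fisher bound, I would use the representation $p_s^* X \overset{d}{=} e^{-s} X + \sqrt{1 - e^{-2s}}\, G$ from \eqref{OU-prop1} together with Lemma \ref{Fisher ineq} applied to the weights $\lambda_1 = e^{-s}$ and $\lambda_2 = \sqrt{1 - e^{-2s}}$ (which satisfy $\lambda_1^2 + \lambda_2^2 = 1$), giving
\begin{equation*}
J(p_s^* f) \le e^{-2s} J(f) + (1 - e^{-2s}) J(G) = e^{-2s} J(f) + (1 - e^{-2s}) \le \max\{J(f), 1\}
\end{equation*}
for every $s \ge 0$, where $J(G) = 1$ and the hypothesis $J(f) < \infty$ are used. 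Consequently $|J(p_s^*f) - 1|$ is uniformly bounded on $[0, t]$.

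Fix $0 < \epsilon < t$. On $[\epsilon, t]$ the map $s \mapsto h(p_s^* f)$ has pointwise derivative $J(p_s^*f) - 1$ bounded in absolute value by $\max\{J(f), 1\} + 1$, so by the mean value theorem it is Lipschitz, hence absolutely continuous, and the fundamental theorem of calculus yields
\begin{equation*}
h(p_t^* f) - h(p_\epsilon^* f) = \int_\epsilon^t [J(p_s^* f) - 1]\, ds.
\end{equation*}
Letting $\epsilon \to 0^+$, the left side tends to $h(p_t^*f) - h(f)$ by continuity at $0$ from Lemma \ref{continuity on entropy}, while the right side tends to $\int_0^t [J(p_s^*f) - 1]\, ds$ by dominated convergence against the constant dominating function above. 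This yields the stated identity; the only mildly delicate step is the Fisher bound near $s = 0$, and once that is in place the rest is essentially a packaging of Lemma \ref{continuity on entropy}.
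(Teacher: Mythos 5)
Your proof is correct and follows exactly the route the paper intends: the corollary is stated as an immediate consequence of Lemma \ref{continuity on entropy}, and the only detail the paper leaves implicit is the integrability of $J(p_s^*f)-1$ near $s=0$, which you supply via the bound $J(p_s^*f)\le e^{-2s}J(f)+1-e^{-2s}$ from Lemma \ref{Fisher ineq} — the very same bound the paper itself uses later in the proof of Lemma \ref{extended Carlen}. No gaps.
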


The next lemma shows the operation of $p^*_t$ leads to smaller entropy jump.
\begin{lemma} \label{entropy ineq}
	Suppose $f_1, f_2\in\mathcal{T}$ and $J(f_1)<\infty,J(f_2)<\infty$, $\lambda \in(0, 1)$. Then for any $t>0$,
	\begin{equation}
	\triangle_\lambda(f_1,f_2) \geq \triangle_\lambda(p_t^*f_1,p_t^*f_2),
	\end{equation}
	the equality holds iff $f_1, f_2$ are both Gaussian with the same variance.
\end{lemma}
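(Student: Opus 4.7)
The plan is to reduce the inequality to a pointwise (in $s$) application of the Fisher information convolution inequality (Lemma \ref{Fisher ineq}), via the integral representation provided by Corollary \ref{integrate Fisher eq entropy} and the distributive law \eqref{commutation}.

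First I would apply the commutation relation $(p_t^* f_1)*_\lambda(p_t^* f_2)=p_t^*(f_1*_\lambda f_2)$ to rewrite
\[
\triangle_\lambda(p_t^* f_1, p_t^* f_2)=h\bigl(p_t^*(f_1*_\lambda f_2)\bigr)-\lambda^2 h(p_t^* f_1)-(1-\lambda^2)h(p_t^* f_2),
\]
so that the difference $\triangle_\lambda(f_1,f_2)-\triangle_\lambda(p_t^* f_1,p_t^* f_2)$ becomes
\[
\lambda^2\bigl[h(p_t^* f_1)-h(f_1)\bigr]+(1-\lambda^2)\bigl[h(p_t^* f_2)-h(f_2)\bigr]-\bigl[h(p_t^*(f_1*_\lambda f_2))-h(f_1*_\lambda f_2)\bigr].
\]
Since $J(f_1),J(f_2)<\infty$ and hence (by Lemma \ref{Fisher ineq}) $J(f_1*_\lambda f_2)<\infty$, I can invoke Corollary \ref{integrate Fisher eq entropy} on each of the three brackets to write each as $\int_0^t[J(p_s^*\,\cdot\,)-1]\,ds$. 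The constant $-1$ terms combine with coefficients $\lambda^2+(1-\lambda^2)-1=0$ and drop out, giving the clean identity
\[
\triangle_\lambda(f_1,f_2)-\triangle_\lambda(p_t^* f_1,p_t^* f_2)=\int_0^t\Bigl\{\lambda^2 J(p_s^* f_1)+(1-\lambda^2)J(p_s^* f_2)-J\bigl(p_s^*(f_1*_\lambda f_2)\bigr)\Bigr\}\,ds.
\]

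Next I would apply \eqref{commutation} once more at the level of $p_s^*$ to identify $p_s^*(f_1*_\lambda f_2)$ with $(p_s^* f_1)*_\lambda(p_s^* f_2)$, and then invoke Lemma \ref{Fisher ineq} with $n=2$, $\lambda_1=\lambda$, $\lambda_2=\sqrt{1-\lambda^2}$ to conclude that the integrand is nonnegative for every $s\in(0,t]$. This gives the desired inequality.

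For the equality case, note that if $\triangle_\lambda(f_1,f_2)=\triangle_\lambda(p_t^* f_1,p_t^* f_2)$ then the integrand must vanish for (Lebesgue-)almost every $s\in(0,t)$. By the equality clause of Lemma \ref{Fisher ineq}, this forces $p_s^* f_1$ and $p_s^* f_2$ to be Gaussian densities of the same variance for such $s$. Since $p_s^* X_i\overset{d}{=}e^{-s}X_i+\sqrt{1-e^{-2s}}G$ with $G$ independent Gaussian, Cram\'er's decomposition theorem (a Gaussian convolution factor forces the other factor to be Gaussian) implies $f_1,f_2$ are themselves Gaussian, and matching the variance of $p_s^* f_1=p_s^* f_2$ yields $\V(f_1)=\V(f_2)$. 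The converse is immediate since both sides of the inequality then equal $0$.

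The main technical subtlety is ensuring the integrals in the identity converge and that Corollary \ref{integrate Fisher eq entropy} applies to $f_1*_\lambda f_2$; this is handled by the hypothesis $J(f_i)<\infty$ together with Lemma \ref{Fisher ineq} (which controls $J(f_1*_\lambda f_2)$ and the intermediate Fisher informations along the O--U flow).
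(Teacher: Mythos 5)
Your proposal is correct and follows essentially the same route as the paper: commute $p_t^*$ with $*_\lambda$, represent the entropy differences as integrals of Fisher information via Corollary \ref{integrate Fisher eq entropy}, and apply Lemma \ref{Fisher ineq} pointwise in $s$. Your treatment of the equality case is in fact slightly more careful than the paper's (which asserts the equivalence tersely), since you invoke Cram\'er's decomposition theorem to pass from ``$p_s^*f_i$ Gaussian'' back to ``$f_i$ Gaussian.''
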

\pf By Corollary \ref{integrate Fisher eq entropy}, Lemma \ref{Fisher ineq} and \eqref{commutation},
\begin{align*}
& \hspace{1.4em} h((p_t^*f_1)*_\lambda (p_t^* f_2)) = h(p_t^*(f_1*_\lambda f_2)) \\
&= h(f_1*_\lambda f_2) + \int_0^t [J(p_s^*(f_1*_\lambda f_2))-1]ds \\
&= h(f_1*_\lambda f_2) + \int_0^t [J((p_s^*f_1)*_\lambda (p_s^*f_2))-1]ds\\
& \leq h(f_1*_\lambda f_2) + \int_0^t [\lambda^2J(p_s^*f_1) + (1-\lambda^2)J(p_s^*f_2)-1]ds\\
& = h(f_1*_\lambda f_2) + \lambda^2\int_0^t [J(p_s^*f_1)-1]ds+(1-\lambda^2)\int_0^t + J(p_s^*f_2)-1]ds\\
& = h(f_1*_\lambda f_2) + (\lambda^2h(p_t^*f_1) + (1-\lambda^2)h(p_t^*f_2)) - (\lambda^2h(f_1)+(1-\lambda^2)h(f_2)),
\end{align*}
the equality holds iff $p_t^*f_1$ and $p_t^*f_2$ are Gaussian with the same variance for all $t>0$, which is equivalent to the statement of Lemma \ref{entropy ineq}. \e

By taking $t\rightarrow\infty$ in Lemma \ref{entropy ineq} and using Lemma \ref{continuity on entropy}, we obtain the following corollary.
\begin{corollary}\label{entropy power}
	Given $f_1,f_2\in\mathcal{T}$ and $J(f_1)<\infty,J(f_2)<\infty$, $\lambda\in(0,1)$, then
	\begin{equation}
	\triangle_\lambda(f_1,f_2)\geq0,
	\end{equation}
	the equality holds iff $f_1, f_2$ are Gaussian with the same variance.
\end{corollary}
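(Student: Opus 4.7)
The plan is to run the ``heat flow'' argument: Lemma \ref{entropy ineq} says that applying the Ornstein--Uhlenbeck semigroup $p_t^*$ can only shrink the entropy jump, and Lemma \ref{continuity on entropy} tells us the semigroup drives everything to the standard Gaussian. Combining these monotonicity and limiting facts should give the nonnegativity of $\triangle_\lambda(f_1,f_2)$ essentially for free, and the equality case will follow by chasing through the equality characterization in Lemma \ref{entropy ineq}.

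More concretely, I would first fix $f_1,f_2\in\mathcal{T}$ with $J(f_1),J(f_2)<\infty$ and apply Lemma \ref{entropy ineq} to obtain
\begin{equation*}
\triangle_\lambda(f_1,f_2)\ \ge\ \triangle_\lambda(p_t^*f_1,p_t^*f_2)\qquad\text{for every }t>0.
\end{equation*}
Then I would compute the $t\to\infty$ limit of the right-hand side. Using the commutation \eqref{commutation}, the cross term is $h((p_t^*f_1)*_\lambda(p_t^*f_2))=h(p_t^*(f_1*_\lambda f_2))$, and Lemma \ref{continuity on entropy} gives $h(p_t^*f)\to h(g)=\tfrac12\log(2\pi e)$ for any $f\in\mathcal{T}$ (applied to $f_1$, $f_2$, and $f_1*_\lambda f_2$). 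Therefore
\begin{equation*}
\lim_{t\to\infty}\triangle_\lambda(p_t^*f_1,p_t^*f_2)=h(g)-\bigl(\lambda^2 h(g)+(1-\lambda^2)h(g)\bigr)=0,
\end{equation*}
which yields $\triangle_\lambda(f_1,f_2)\ge 0$. The only subtle point here is making sure Lemma \ref{continuity on entropy} really gives the limit $h(p_t^*f)\to h(g)$; since $p_t^*f\overset{d}{=}e^{-t}X+\sqrt{1-e^{-2t}}G$, the sandwich bounds displayed inside the proof of Lemma \ref{continuity on entropy} already supply this without additional work.

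For the equality case, the ``if'' direction is trivial: if $f_1=f_2=g_\sigma$ for some $\sigma>0$, then $\lambda X_1+\sqrt{1-\lambda^2}X_2$ is again $\mathcal{N}(0,\sigma^2)$, so $h$ agrees on both sides and $\triangle_\lambda=0$. For the ``only if'' direction, suppose $\triangle_\lambda(f_1,f_2)=0$. Applying the monotonicity chain
\begin{equation*}
0=\triangle_\lambda(f_1,f_2)\ \ge\ \triangle_\lambda(p_t^*f_1,p_t^*f_2)\ \ge\ \triangle_\lambda(p_{s+t}^*f_1,p_{s+t}^*f_2)\xrightarrow{s\to\infty}0,
\end{equation*}
we get $\triangle_\lambda(p_t^*f_1,p_t^*f_2)=0$ for every $t>0$, which means equality is attained in Lemma \ref{entropy ineq}. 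The equality clause of that lemma then forces $f_1,f_2$ to be Gaussian with a common variance.

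I do not anticipate any real obstacle beyond bookkeeping: the semigroup identities and continuity of $h$ along the OU flow are already packaged in the lemmas, and the $\triangle_\lambda$ functional is monotone along the flow, so the limiting argument is clean. The one place where care is needed is the equality case, where one must first upgrade the one-sided monotonicity $\triangle_\lambda(f_1,f_2)\ge\triangle_\lambda(p_t^*f_1,p_t^*f_2)$ to the lower bound $\triangle_\lambda(p_t^*f_1,p_t^*f_2)\ge 0$ before invoking the equality characterization of Lemma \ref{entropy ineq}; this is exactly what the ``semigroup inside the semigroup'' step above accomplishes.
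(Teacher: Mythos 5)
Your proposal is correct and follows essentially the same route as the paper, which proves this corollary precisely by letting $t\to\infty$ in Lemma \ref{entropy ineq} and invoking Lemma \ref{continuity on entropy}; your extra care in the equality case (first establishing $\triangle_\lambda(p_t^*f_1,p_t^*f_2)\ge 0$ via the semigroup property before applying the equality clause of Lemma \ref{entropy ineq}) is exactly the bookkeeping the paper leaves implicit.
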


Finally, we can prove Lemma \ref{extended Carlen}.

\textbf{Proof of Lemma \ref{extended Carlen}.} Without loss of generality, we assume $\E(X) = \E(Y) = 0$. Suppose $X$ and $Y$ have density $f_1$ and $f_2$, respectively.  Denote
\begin{equation}
V_i = \V(f_i),\ V_{t,i} = \V(p_t^*f_i) = e^{-2t}\V(f_i) + 1 - e^{-2t},\\ i=1,2.
\end{equation}
Then for $i = 1,2$,
\begin{equation}
\begin{aligned}
&|V_{t,i} -V_i| = |(1-e^{-2t})(1-V_i)| \leq (1-e^{-2t})(V_i+1) \leq (1-e^{-2t})(l(0)+1),\\
&|h(g_{V_{t,i}}) - h(g_{V_i})| =\left|\frac{1}{2}\log(2\pi eV_{t,i}) - \frac{1}{2}\log(2\pi eV_{i})\right| \leq \frac{1}{2a}|V_{t,i}-V_i| \leq \frac{(1-e^{-2t})(l(0)+1)}{2a}.
\end{aligned}
\end{equation}
Therefore, it is able to take  $t_1 = t_1(l,a,\epsilon)$ sufficiently small such that for $i=1,2$,
\begin{equation}
|V_{t,i} -V_i|\leq \frac{\epsilon}{5}\ \ \text{and}\ \
|h(g_{V_{t,i}}) - h(g_{V_i})|\leq \frac{\epsilon}{5},\ \ \forall t < t_1.
\end{equation}
According to Corollary \ref{integrate Fisher eq entropy}, for $i=1,2$ and any $t>0$ we have
\begin{align}
|h(p_t^*f_i) - h(f_i)| \leq& \int_0^t |J(p_s^*f_i) - 1| ds\\ \leq & \int_0^t e^{-2s}J(f_i)+1-e^{-2s}+1\  ds
\leq \int_0^t (J_0-1)e^{-2s}+2 \ ds, \nonumber
\end{align}
Thus we can take $t_2 = t_2(J_0,\epsilon)$ small enough such that for $i=1,2$,
\begin{equation}
|h(p_t^*f_i) - h(f_i)| \leq \frac{\epsilon}{5},\ \ \forall t < t_2.
\end{equation}
Let $t = \min\{t_1,t_2\}$,  then for $i=1,2$,
\begin{equation}
\begin{aligned}
D(p_t^*f_i)  & = h(g_{V_{t,i}}) - h(p_t^*f_i) \\
&=h(g_{V_{t,i}}) - h(g_{V_{i}}) + h(g_{V_{i}}) - h(f_i) + h(f_i) - h(p_t^*f_i)\\
&\geq \KL(f_i||g_{V_i}) - \frac{2\epsilon}{5}= D(f_i) - \frac{2\epsilon}{5},
\end{aligned}
\end{equation}
and
\begin{equation}
|V_{t,1}-V_{t,2}| \geq |V_1-V_2| - |V_{t,1}-V_1|-|V_{t,2}-V_2| \geq |V_1-V_2| - \frac{2\epsilon}{5}.
\end{equation}
Thus
\begin{equation}\label{A}
\begin{aligned}
&\max\{D(p^*_tf_1),D(p^*_tf_2)\}+ |\V(p^*_tf_1)-\V(p^*_tf_2)| \\
&\geq \max\{D(f_1),D(f_2)\}+|V_1-V_2|- \frac{4\epsilon}{5} \geq \frac{\epsilon}{5}.
\end{aligned}
\end{equation}
Denote $\varphi(q_1,q_2) =\max\{D(q_1),D(q_2)\} + |\V(q_1) - \V(q_2)|$, then by Proposition \ref{continuity on h V KL}, $\varphi$ is continuous on $\mathcal{H}_{t,l} \times \mathcal{H}_{t,l}$. Let
\begin{equation}
\mathcal{G} = \{(q_1,q_2)\in\mathcal{H}_{t,l} \times \mathcal{H}_{t,l}:\ \varphi(q_1,q_2) \geq \frac{\epsilon}{5}\},
\end{equation}
from Lemma \ref{compactness of HtL} and the continuity of $\varphi$, we conclude that $\mathcal{G}$ is compact. Notice that the entropy jump $\triangle_\lambda(q_1,q_2)$ is continuous on $\mathcal{H}_{t,l}\times \mathcal{H}_{t,l}$ by Proposition \ref{continuity on h V KL} and the fact that $*_\lambda$ is linear operation. Therefore, it is able to take $(a_1,a_2)\in\mathcal{G}$ such that
\begin{equation}
\triangle_\lambda(a_1,a_2) = \inf\limits_{(q_1,q_2)\in\mathcal{G}}\triangle_\lambda(q_1,q_2).
\end{equation}
By \eqref{A} we have $(p_t^*f_1,p_t^*f_2)\in\mathcal{G}$. Now using Lemma \ref{entropy ineq} we obtain
\begin{equation}
\triangle_\lambda(f_1,f_2)\geq\triangle_\lambda(p_t^*f_1,p_t^*f_2)\geq\triangle_\lambda(a_1,a_2).
\end{equation}
Notice that $\mathcal{G}$ cannot contain a Gaussian pair with the same variance, in other word, it is impossible that  $(a_1,a_2)$ are both Gaussian with the same variance. Thus Corollary \ref{entropy power} implies that $\triangle_\lambda(a_1,a_2) >0$. Now let $\delta = \triangle_\lambda(a_1,a_2)$, it is clear that $\delta$  depends only on $\lambda,J_0,l,a,\epsilon$, but not on the distributions of $X,Y$. This completes the proof.\pe

\subsection{Proof of the Main Theorem \ref{main theorem}}
In this subsection we use Lemma \ref{extended Carlen} to prove Theorem \ref{main theorem}. First, we show that the regular conditions required in Lemma \ref{extended Carlen} holds with high probability.
\begin{proposition}\label{F and L condition}
	Under the assumption 2 and assumption 3 in Theorem \ref{main theorem}, for fixed $ \epsilon > 0$, there exists  constant $J_1 < \infty$ and function $l_1(R)$ decreasing to 0 as $R\rightarrow \infty$, such that
	\begin{equation}
	\P\left(J(X_n|Y_n)\leq J_1,\ L_{X_n|Y_n}(R)\leq l_1(R),\forall R>0\right)   \geq 1 - \epsilon, \ \ \forall n\in\mathbb{N}^+,
	\end{equation}
\end{proposition}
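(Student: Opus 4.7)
The plan is to apply Markov's inequality to each of the two quantities and combine via union bound. For the Fisher information part, assumption 2 gives $\sup_n \E J(X_n|Y_n) \leq J_0$, so setting $J_1 := 2J_0/\epsilon$ yields $\P(J(X_n|Y_n) > J_1) \leq \epsilon/2$ uniformly in $n$ by Markov's inequality.

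The substantive content is constructing the deterministic decreasing envelope $l_1(R) \to 0$ such that $L_{X_n|Y_n}(R) \leq l_1(R)$ for every $R > 0$ simultaneously on a single event of probability at least $1 - \epsilon/2$. The key observation is that for every fixed realization, $R \mapsto L_{X_n|Y_n}(R)$ is itself a decreasing function of $R$, so it suffices to control its values along a countable grid $0 = R_0 < R_1 < R_2 < \cdots$ with $R_k \uparrow \infty$. Since $l(R) \to 0$ by assumption 3, I pick $R_k$ inductively so that $l(R_k) \leq \epsilon \cdot 2^{-2k-3}$ for each $k \geq 1$. Choose budgets $b_k := 2^{-k}$ for $k \geq 1$ and $b_0 := \max\{4l(0)/\epsilon,\, 1\}$ (large enough that $b_0 \geq b_1$). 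Markov's inequality gives $\P(L_{X_n|Y_n}(R_k) > b_k) \leq l(R_k)/b_k$, which is at most $\epsilon/4$ for $k=0$ and at most $\epsilon \cdot 2^{-k-2}$ for $k \geq 1$; summing over $k \geq 0$ the total failure probability is at most $\epsilon/2$, uniformly in $n$.

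Define $l_1(R) := b_k$ for $R \in [R_k, R_{k+1})$; by construction $l_1$ is a positive decreasing step function tending to $0$ and is independent of $n$. On the complement of the union of bad events above, for any $R > 0$ pick the $k$ with $R \in [R_k, R_{k+1})$ and use monotonicity to obtain $L_{X_n|Y_n}(R) \leq L_{X_n|Y_n}(R_k) \leq b_k = l_1(R)$. A final union bound with the Fisher-information event completes the argument. The main obstacle is exactly the $\forall R > 0$ clause: a single Markov step only bounds $L_{X_n|Y_n}$ at one $R$, and one must simultaneously (i) replace the uncountable supremum by a countable one via monotonicity of $L$ in $R$, (ii) arrange the grid so that the geometric sum of failure probabilities is summable while the budgets still tend to zero, and (iii) interpolate the discrete bounds into a single deterministic decreasing function $l_1$ independent of $n$.
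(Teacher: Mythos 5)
Your proposal is correct and follows essentially the same route as the paper's proof: Markov's inequality with $J_1 = 2J_0/\epsilon$ for the Fisher information, then a countable grid $R_k\uparrow\infty$ chosen so that Markov bounds at the grid points sum to $\epsilon/2$, extended to all $R>0$ by the monotonicity of $R\mapsto L_{X_n|Y_n}(R)$ and a decreasing step-function envelope $l_1$. The only difference is bookkeeping (the paper sets the budgets to $l_1(R_m)=2^{m+1}l(R_m)/\epsilon$ with the grid satisfying $l(R_m)\leq\min\{1/(m2^m),\,l(R_{m-1})/2\}$, whereas you fix budgets $b_k=2^{-k}$ and push the decay into the grid), which is immaterial.
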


The proof of Proposition \ref{F and L condition} is given in Appendix \ref{proof of F and L condition}.

\textbf{Proof of Theorem \ref{main theorem}.} For any $\alpha >0$, by Proposition \ref{F and L condition} and the assumption 1 in Theorem \ref{main theorem}, there exists $J_1<\infty$ and function $l_1(R)$ such that the set
\begin{equation}
T_n = \{y_n:\ J(X_n|y_n) \leq J_1,\ L_{X_n|y_n}(R)\leq l_1(R),\forall R>0,\ \V(X_n|y_n) \geq a\}
\end{equation}
has probability $\P(T_n) \geq 1 - \alpha$.

We first prove \eqref{general D}. For any $\epsilon > 0,\ n\in\mathbb{N}^+$. Define
\begin{equation}
R_{n} = \{y_n:\ D(X_n|y_n) > \epsilon\}.
\end{equation}
Let $W_n = T_n \bigcap R_n$, by Lemma \ref{extended Carlen}, there exists a constant $\delta>0$ only depending on $J_1,\ l_1,\ a,\ \lambda,\ \epsilon$ such that
\begin{equation}\label{lower bound delta}
\begin{aligned}
\triangle_\lambda(X_n,X'_n|y_n,y'_n)&:=h(X_n *_\lambda X'_n|y_n,y'_n) - (\lambda^2h(X_n|y_n) + (1-\lambda^2)h(X'_n|y'_n)) \\
&\geq \delta,\ \ \forall y_n,y'_n \in W_n.
\end{aligned}
\end{equation}
From \eqref{entropy condition 2} and \eqref{lower bound delta} we have
\begin{equation}\label{H1}
\begin{aligned}
&|\E h(X_{n+1}|Y_{n+1}) - \E h(X_n|Y_n)| =\E\left[h(X_n*_\lambda X_n'|Y_n, Y_n')-h(X_n|Y_n)\right]\\
&  = \int \left[h(X_n *_\lambda X'_n|y_n,y'_n)-(\lambda^2h(X_n|y_n) + (1-\lambda^2)h(X'_n|y'_n))\right]\P(dy_n)\P(dy'_n) \\
& = \int \triangle_\lambda(X_n,X'_n|y_n,y'_n)\P(dy_n)\P(dy'_n)\\
&\geq \int_{W_n \times W_n} \triangle_\lambda(X_n,X'_n|y_n,y'_n)\P(dy_n)\P(dy'_n)\\
& \geq \delta \left(\P(W_n)\right)^2.
\end{aligned}
\end{equation}
Notice that $\delta$ is independent with $n$, and $h(X_n|Y_n)$ is convergent, letting $n\rightarrow\infty$ in \eqref{H1} we have
\begin{equation}
\lim\limits_{n\rightarrow\infty}\  \P(W_n) = 0.
\end{equation}
Since $\alpha$ is arbitrary and
\begin{equation}\label{H2}
\P(W_n) \geq \P(T_n)+\P(R_n) - 1 \geq \P(R_n) - \alpha,
\end{equation}
we obtain $\lim\limits_{n\rightarrow\infty}\P(R_n) = 0$ and this proves \eqref{general D}.

Next we show \eqref{general variance}. For any $\epsilon >0$, define
\begin{equation}
A_{n} = \{(y_n,y'_n):\ |\V(X_n|y_n)-\V(X'_n|y'_n)| \geq \epsilon\}.
\end{equation}
Let $U_n = (T_n\times T_n)\cap A_n$, Lemma \ref{extended Carlen} implies that
\begin{equation}
\triangle_\lambda(X_n,X'_n|y_n,y'_n) \geq \delta, \ \forall (y_n,y'_n)\in U_n
\end{equation}
for some $\delta >0$ that is independent with $n$. Now follow the same argument as \eqref{H1}-\eqref{H2} we obtain
\begin{equation}\label{An vanish}
\lim\limits_{n\rightarrow\infty}\P(A_n) = 0.
\end{equation}
Let $Z_n = \V(X_n|Y_n)$, \eqref{An vanish} is equivalent to
\begin{equation}
Z_n - Z'_n \xrightarrow{\text{Pr.}} 0 ,\ n\rightarrow\infty,
\end{equation}
where $Z'_n$ is the independent copy of $Z_n$. Since $\{Z_n,n\geq 1\}$ is uniformly integrable, we have $Z_n - Z_n' \xrightarrow{L^1} 0$. Let $\mathcal{F} = \sigma(\{Z_n,n\geq 1\})$ be the sigma algebra generated by all $Z_n$, then we have
\begin{equation}
Z_n - \E Z_n = \E\left(Z_n-Z'_n|\mathcal{F}\right) \xrightarrow{L^1} 0,
\end{equation}
which implies
\begin{equation}\label{Zn L1}
\E|\V(X_n|Y_n) - \E\V(X_n|Y_n)| = \E|Z_n - \E Z_n|\rightarrow 0,\ \ n\rightarrow\infty.
\end{equation}

Finally we show \eqref{V limit} and \eqref{h limit}. Denote $h_n = \E h(X_n|Y_n),V_n = \E\V(X_n|Y_n)$.
Let $V_{n_k}$ be any convergent subsequence of $V_n$ and denote the limit by $v$. From \eqref{Zn L1} we have $Z_{n_k} \xrightarrow{\text{Pr.}}v$. According to \eqref{general D} we have
\begin{equation}
D(X_{n_k}|Y_{n_k}) = \frac{1}{2}\log 2\pi e Z_{n_k} - h(X_{n_k}|Y_{n_k}) \xrightarrow{\text{Pr.}}0,
\end{equation}
which implies that $h(X_{n_k}|Y_{n_k}) \xrightarrow{\text{Pr.}}\frac{1}{2}\log 2\pi ev$. Because of the uniform integrability of $\{h(X_n|Y_n),n\geq1\}$, we have $h_{n_k}\rightarrow \frac{1}{2}\log 2\pi ev$. Since the sequence $h_n$ is convergent, if we denote the limit by $h_\infty$, then $v = \exp(2h_\infty-1)/\pi$. In other word, every convergent subsequence of $V_n$ tends to the same limit $\exp(2h_\infty-1)/\pi$. Finally, since $V_n \leq l(0)$ are bounded, every subsequence of $V_n$ has a further convergent subsequence, this shows that
\begin{equation}\label{h V limit}
\lim\limits_{n\rightarrow\infty}V_n =  \exp(2h_\infty-1)/\pi := \sigma^2.
\end{equation}
Notice that $\sigma^2$ cannot be 0 because $V_n \geq a  >0 $. Clearly \eqref{h V limit} also implies \eqref{h limit}, and this completes the whole proof.\pe




\section{Entropic Conditional Central Limit Theorem}\label{section4}
This section we apply Main Theorem to derive the {\it entropic conditional central limit theorem}~(entropic CCLT). Suppose $(\xi,\eta)$ is a two-dimensional random variable such that for almost sure  realization $\eta=y$, the regular conditional distribution $\xi|\eta=y$ exists with finite variance and is absolutely continuous with respect to Lebesgue measure. Let $(\xi_1,\eta_1),(\xi_2,\eta_2),\cdots,(\xi_n,\eta_n)$ be \textit{i.i.d.} samples from $(\xi,\eta)$, $\bm{\eta}_n = (\eta_1,\eta_2,\cdots,\eta_n)$ be the random vector of the first $n$ samples of $\eta$, and $\bm{y}_n = (y_1,y_2,\cdots,y_n)$ be a realization of $\bm{\eta}_n$. Let $W_n = (\xi_1+\xi_2+\cdots+\xi_n)/\sqrt{n}$ be the normalized partial sum, and $\tilde{g}$ be a Gaussian density with the same mean and variance as the conditional distribution $(W_n|\bm{\eta}_n=\bm{y}_n)$. Denote the conditional density of $(W_n|\bm{\eta}_n=\bm{y}_n)$ by $f_n(x|\bm{y}_n)$. The Gaussianity of $W_n$ conditioning on $\bm{\eta}_n = \bm{y}_n$ is measured by
\begin{equation}\label{conditional Gaussianity}
D(W_n|\bm{y}_n) := \KL(f_n(x|\bm{y}_n)||\tilde{g}) = \int_\R f_n(x|\bm{y}_n)\log\frac{f_n(x|\bm{y}_n)}{\tilde{g}(x)}dx.
\end{equation}
Equivalently, $D(W_n|\bm{y}_n)$ is also given by
\begin{equation}
D(W_n|\bm{y}_n) = \KL(\tilde{W}_n(\bm{y}_n)||\mathcal{N}(0,1)|\bm{\eta}_n = \bm{y}_n),
\end{equation}
where
\begin{equation}\label{Sn'}
\tilde{W}_n(\bm{y}_n) = \frac{\sum\limits_{k=1}^n (\xi_k - \E(\xi|\eta_k=y_k))}{\sqrt{\sum\limits_{k=1}^n \V(\xi|\eta_k = y_k)}}
\end{equation}
stands for the conditional normalized partial sum. Notice that $D(W_n|\bm{y}_n)$ is a measurable function of $\bm{y}_n$ and hence $D(W_n|\bm{\eta}_n)$ is a random variable representing the divergence under a random realization. In this paper, it is shown that $D(W_n|\bm{\eta}_n)$ converges to 0 in expectation under some regular conditions. The specific result is stated in Theorem \ref{sum theorem}.

\begin{theorem} [Entropic CCLT]\label{sum theorem}
	
	Let $(\xi,\eta)$ be a two-dimensional random variable. Suppose $(\xi_1,\eta_1),(\xi_2,\eta_2),\cdots,(\xi_n,\eta_n)$ are independent and identically distributed as $(\xi,\eta)$. Let $W_n = (\xi_1 + \xi_2 +\cdots+\xi_n) / \sqrt{n}$, and $\bm{\eta}_n = (\eta_1,\eta_2,\cdots,\eta_n)$. Assume the following conditions hold.
	\begin{enumerate}
		\item $\V(\xi)<\infty, (\xi|\eta) \prec \mathcal{L}, \P_\eta-a.s.$, where $\mathcal{L}$ denotes Lebesgue measure and $\mu \prec \mathcal{L}$ means $\mu$ is absolutely continuous with respect to $\mathcal{L}$.
		\item  $\E J(\xi|\eta) < \infty,\  \E h(\xi|\eta)>-\infty$ (see Section \ref{section2} for the definition).
		\item (Uniform lower bound on conditional variance) There exists $a > 0$ such that $\V(\xi|\eta) \geq a,\ \P_\eta-a.s.$
	\end{enumerate}
	Denote $h_n = \E h(W_n|\bm{\eta}_n)$ and $\sigma^2 = \E\V(\xi|\eta)$, then we have
	\begin{equation}\label{main result}
	\lim\limits_{n\rightarrow\infty} h_n = \frac{1}{2}\log\left(2\pi e \sigma^2\right).
	\end{equation}
\end{theorem}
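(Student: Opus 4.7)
The strategy is to apply the Main Theorem (Theorem \ref{main theorem}) along the dyadic subsequence $n = 2^m$ and then to extend to the full sequence $n$ by a conditional monotonicity argument. Specialize the Hadamard framework of Section \ref{polar intruduction} to $\lambda = 1/\sqrt{2}$ and the path $\omega = (0, 0, \dots)$, for which $k(m, \omega) \equiv 1$. With the iid pairs $(\xi_0^i, \eta_0^i)$ interpreted as input signals accompanied by side information, set
\[ X_m := W_{2^m} = 2^{-m/2}\sum_{i=1}^{2^m}\xi_0^i, \qquad Y_m := (\eta_0^1, \dots, \eta_0^{2^m}) = \bm\eta_{2^m}. \]
Splitting a block of length $2^{m+1}$ into its two halves gives $X_{m+1} = (X_m + X_m')/\sqrt{2} = X_m *_\lambda X_m'$ and $Y_{m+1} = (Y_m, Y_m')$, where $(X_m', Y_m')$ is an iid copy of $(X_m, Y_m)$. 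This is precisely the dyadic recursion of Section \ref{polar intruduction}, so the entropic identity \eqref{entropy condition 2} holds for $(X_m, Y_m)$ with $\lambda = 1/\sqrt{2}$ (cf.\ \eqref{basic1}).

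The bulk of the work is verifying the remaining hypotheses of Theorem \ref{main theorem}. Condition (1) is immediate because, conditionally on $\bm\eta_{2^m}$, the $\xi_0^i$ are independent with $\V(\xi | \eta_k) \geq a$, so $\V(X_m | Y_m) = 2^{-m}\sum_{k=1}^{2^m}\V(\xi | \eta_k) \geq a$. Condition (2) follows by applying Lemma \ref{Fisher ineq} pointwise in $\bm\eta_{2^m}$ to obtain $J(X_m | Y_m) \leq 2^{-m}\sum_{k=1}^{2^m} J(\xi | \eta_k)$, whence $\E J(X_m | Y_m) \leq \E J(\xi | \eta) < \infty$. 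Condition (3), the tail-variance bound, comes from a conditional Chebyshev estimate together with the explicit form $\V(X_m | Y_m) = 2^{-m}\sum_k \V(\xi | \eta_k)$, whose $L^1$-norm equals $\sigma^2$. For \eqref{entropy condition 1}, the conditional entropy-jump inequality applied pointwise and then integrated yields $h_{2n} \geq h_n$, while Jensen's inequality and Gaussian maximality give $h_n \leq \tfrac{1}{2}\log(2\pi e \sigma^2)$; hence $\{h_{2^m}\}$ is non-decreasing and bounded, hence convergent. Uniform integrability of $\{h(X_m | Y_m)\}$ is obtained by sandwiching the conditional entropy between the upper envelope $\tfrac{1}{2}\log(2\pi e \V(X_m | Y_m))$ and the Stam-type lower envelope $\tfrac{1}{2}\log(2\pi e) - \tfrac{1}{2}\log J(X_m | Y_m)$, using the uniform $L^1$ control of $\V(X_m | Y_m)$ and $J(X_m | Y_m)$.

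Theorem \ref{main theorem} then yields $\lim_{m \to \infty} h_{2^m} = \tfrac{1}{2}\log(2\pi e \sigma^2)$, where the limiting variance is identified via the constancy $\E\V(W_n | \bm\eta_n) = \E\V(\xi | \eta) = \sigma^2$ for every $n$. To extend from the dyadic subsequence to the full sequence, the plan is to invoke the conditional analogue of Artstein--Ball--Barthe--Naor monotonicity: given $\bm\eta_{n+1}$ the summands $\xi_k$ are conditionally independent, and the pointwise ABBN monotonicity applied to conditional partial sums yields $h(W_{n+1} | \bm\eta_{n+1}) \geq h(W_n | \bm\eta_n)$ after integrating. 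Combined with the upper bound $h_n \leq \tfrac{1}{2}\log(2\pi e \sigma^2)$, this makes $\{h_n\}$ non-decreasing and bounded, so $\lim_n h_n = \sup_n h_n \geq \sup_m h_{2^m} = \tfrac{1}{2}\log(2\pi e\sigma^2)$, which matches the upper bound.

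The two main obstacles are: (a) the uniform integrability of the conditional entropies — the upper tail is easily controlled by the conditional variance, but the lower tail needs a quantitative lower envelope via Stam's inequality (or an OU-semigroup smoothing argument), and converting an $L^1$ bound on $J$ into uniform integrability of $\log J$ requires care (a small OU perturbation or an $L^{1+\varepsilon}$ moment refinement should suffice); and (b) the passage from the dyadic subsequence to the full sequence, which in the cleanest formulation requires a conditional ABBN monotonicity $h_{n+1} \geq h_n$ — an extension of Artstein--Ball--Barthe--Naor to the non-iid conditional setting that is not entirely routine but can be executed by combining the pointwise statement for independent but non-identically distributed summands with a Fubini step.
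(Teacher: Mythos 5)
Your overall strategy -- dyadic blocking $X_m = W_{2^m}$, $Y_m = \bm\eta_{2^m}$, verification of the hypotheses of Theorem \ref{main theorem}, then extension to all $n$ -- is exactly the paper's, and your verifications of conditions (1), (2) and of \eqref{entropy condition 1}--\eqref{entropy condition 2} match the paper's line for line. Two sub-steps differ. First, for the passage from the dyadic subsequence to the full sequence, the paper does \emph{not} need any conditional ABBN monotonicity: it shows that $nh_n$ is superadditive, $(m+n)h_{m+n}\geq mh_m+nh_n$, using only the two-block conditional entropy jump inequality (Corollary \ref{entropy power}) with $\lambda=\sqrt{n/(m+n)}$, and then invokes Fekete's lemma to get $\lim_n h_n=\sup_n h_n$. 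Your worry in obstacle (b) is therefore moot -- the heavy non-iid ABBN machinery is entirely avoidable. Second, for identifying the limit of the expectations $h_n$, the paper does not establish uniform integrability of $\{h(X_m|Y_m)\}$ at all: it uses only the first conclusion \eqref{general D} of the Main Theorem, combines it with the law of large numbers for $\V(W_N|\bm\eta_N)=\frac1N\sum_k\V(\xi|\eta_k)$ to get $h_N(\bm\eta)\xrightarrow{\Pr}\frac12\log(2\pi e\sigma^2)$, and then converts this to convergence of expectations by Fatou's lemma applied to $h_N(\bm\eta)-\frac1N\sum_k h(\xi|\eta_k)\geq 0$, which is where the hypothesis $\E h(\xi|\eta)>-\infty$ enters. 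Your alternative -- sandwiching $h(X_m|Y_m)$ between $\frac12\log(2\pi e\V(X_m|Y_m))$ and the Stam bound $\frac12\log(2\pi e)-\frac12\log J(X_m|Y_m)$ -- does in fact yield uniform integrability with less care than you fear: if $\E J_m\leq C$ uniformly, then $\E[\log^+ J_m\,\bm 1_{\{J_m>e^K\}}]\leq (K/e^K)\,\E J_m\to0$, so $\{\log^+ J_m\}$ is uniformly integrable, and likewise for $\log^+\V$; no $L^{1+\varepsilon}$ refinement or OU smoothing is needed. This is a legitimately different (and arguably cleaner) finish that, interestingly, does not appear to use $\E h(\xi|\eta)>-\infty$.

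The one genuine soft spot is your justification of condition (3). A ``conditional Chebyshev estimate'' with only second moments of $\xi$ does not produce a bound on $\E[\bar W_n^2\bm 1_{\{|\bar W_n|\geq R\}}]$ by a single function $l(R)\downarrow0$ uniformly in $n$: Chebyshev controls $\P(|\bar W_n|\geq R)$ but not the truncated second moment, and the fourth-moment Cauchy--Schwarz route (as in Proposition \ref{moment and L bound}) is unavailable since Theorem \ref{sum theorem} assumes only $\V(\xi)<\infty$. What is actually needed, and what the paper proves in Appendix \ref{proof of 2th moment}, is the uniform integrability of $\{\bar W_n^2\}$, obtained from the CLT plus the Skorohod representation theorem together with convergence of second moments. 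You should replace the Chebyshev remark with that argument (or an equivalent truncation argument); as stated, this step of your proposal does not close.
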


We make some remarks on the assumptions in Theorem \ref{sum theorem}. The assumption 1 and 2 are fundamental regular conditions ensuring that  the conditional distribution and related quantities are well-defined. We require uniform lower bound on conditional variance in assumption 3 to exclude some singular points of the conditional distribution. For example, let $\eta\sim \text{Uniform}(-1,1)$ and $\xi|\eta\sim\text{Uniform}(0,|\eta|)$, then $\V(\xi|y) = y^2/12$ vanishes and $h(\xi|y) = \log |y|$ tends to $-\infty$ as $y\rightarrow 0$. In this case $y=0$ is a singular point, which makes the behaviour of $D(W_n|\bm{y}_n)$ hard to analyze for some extreme realizations $\bm{y}_n$.  

 The following corollary can be deduced directly.
\begin{corollary}\label{sum corollary}
	Under the same conditions in Theorem \ref{sum theorem},
	\begin{equation}\label{main result_a}
	\lim\limits_{n\rightarrow\infty}\E D(W_n|\bm{\eta}_n) = 0.
	\end{equation}
\end{corollary}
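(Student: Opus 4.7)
The plan is to reduce the corollary directly to Theorem \ref{sum theorem} via the standard identity relating KL divergence from a Gaussian to the differential entropy and the variance of the underlying density. Fix a realization $\bm{y}_n$ and let $\tilde g$ be Gaussian with the same mean $\mu_{\bm{y}_n}$ and variance $\sigma^2_{\bm{y}_n}$ as $(W_n|\bm{\eta}_n=\bm{y}_n)$. A direct calculation from the definition \eqref{conditional Gaussianity} gives
\begin{equation}\label{plan-identity}
D(W_n|\bm{y}_n) = \tfrac{1}{2}\log\bigl(2\pi e\,\V(W_n|\bm{y}_n)\bigr) - h(W_n|\bm{y}_n),
\end{equation}
because $\int f_n(x|\bm{y}_n)\log \tilde g(x)\,dx = -\tfrac{1}{2}\log(2\pi\sigma^2_{\bm{y}_n}) - \tfrac{1}{2}$. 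Taking expectation over $\bm{\eta}_n$ yields
\begin{equation}\label{plan-expect}
\E D(W_n|\bm{\eta}_n) = \tfrac{1}{2}\E\log\bigl(2\pi e\,\V(W_n|\bm{\eta}_n)\bigr) - h_n.
\end{equation}

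Next I would identify the conditional variance. Because the pairs $(\xi_i,\eta_i)$ are i.i.d., conditionally on $\bm{\eta}_n=(y_1,\ldots,y_n)$ the variables $\xi_1,\ldots,\xi_n$ are independent with $\V(\xi_i|\eta_i=y_i)=\V(\xi|\eta_i=y_i)$, so
\begin{equation}
\V(W_n|\bm{\eta}_n) = \frac{1}{n}\sum_{i=1}^n \V(\xi|\eta_i).
\end{equation}
The summands are i.i.d.\ copies of $\V(\xi|\eta)$, which has finite mean $\sigma^2 = \E\V(\xi|\eta)<\infty$, so by the strong law of large numbers
$\V(W_n|\bm{\eta}_n)\to \sigma^2$ almost surely.

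Now I would transfer this almost sure convergence to $L^1$ for the logarithm, which is the one spot that needs any care. Using Jensen's inequality (concavity of $\log$) we get the upper bound
\begin{equation}
\E\log \V(W_n|\bm{\eta}_n) \le \log \E\V(W_n|\bm{\eta}_n) = \log \sigma^2,
\end{equation}
so $\limsup_n \E\log \V(W_n|\bm{\eta}_n) \le \log \sigma^2$. For the matching lower bound, the uniform estimate $\V(W_n|\bm{\eta}_n)\ge a>0$ from assumption 3 of Theorem \ref{sum theorem} gives $\log \V(W_n|\bm{\eta}_n)\ge \log a$, so $\log \V(W_n|\bm{\eta}_n) - \log a$ is nonnegative and converges a.s.\ to $\log\sigma^2 - \log a$; Fatou's lemma yields $\liminf_n \E\log \V(W_n|\bm{\eta}_n) \ge \log \sigma^2$. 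Hence $\E\log \V(W_n|\bm{\eta}_n) \to \log \sigma^2$.

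Finally, plug this into \eqref{plan-expect} and invoke Theorem \ref{sum theorem}, i.e.\ $h_n \to \tfrac{1}{2}\log(2\pi e\sigma^2)$, to conclude
\begin{equation}
\E D(W_n|\bm{\eta}_n) \;\longrightarrow\; \tfrac{1}{2}\log(2\pi e\sigma^2) - \tfrac{1}{2}\log(2\pi e\sigma^2) = 0,
\end{equation}
which is \eqref{main result_a}. There is no substantive obstacle; the only nontrivial point is the logarithm of the conditional variance, handled by the Jensen/Fatou sandwich combined with the uniform lower bound $a$.
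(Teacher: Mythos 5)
Your proof is correct and follows essentially the same route as the paper: the identity $\E D(W_n|\bm{\eta}_n) = \tfrac{1}{2}\E\log(2\pi e\,\V(W_n|\bm{\eta}_n)) - h_n$, Jensen's inequality, and Theorem \ref{sum theorem}. The only difference is that the paper skips your Fatou/SLLN lower bound entirely, since $D(W_n|\bm{\eta}_n)\ge 0$ already gives the matching bound from below, so the Jensen upper estimate $\E D(W_n|\bm{\eta}_n)\le \tfrac{1}{2}\log(2\pi e\sigma^2) - h_n \to 0$ suffices.
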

\pf
Note that from Jensen's inequality we have
\begin{equation}
\begin{aligned}
\E D(W_n|\bm{\eta}_n) & = \frac{1}{2}\E\log \left(2\pi e \V(W_n|\bm{\eta}_n)\right) - h_n \\
&\leq \frac{1}{2}\log \left(2\pi e \E\left[\V(W_n|\bm{\eta}_n)\right]\right) - h_n\\
& = \frac{1}{2}\log \left(2\pi e \sigma^2\right) - h_n.
\end{aligned}
\end{equation}
Use Theorem \ref{sum theorem} and then completes the proof.
\e

This indicates that the KL-divergence between  the normalized conditional distribution $\tilde{W}_n(\bm{y}_n)$ and standard Gaussian  converges to 0 in probability,  as $n\rightarrow \infty$.

It is clear that the entropic CLT \eqref{Barron_KL} is covered by Theorem \ref{sum theorem} if we take $\eta$ to be independent with $\xi$. We note here that in \cite{Rubshtein1996} only the weak convergence of conditional distribution $\tilde{W}_n(\bm{y}_n)$ is obtained, however, the entropic method used in our paper leads to the strong convergence in the sense of KL-divergence as stated in Theorem \ref{sum theorem} and \eqref{main result_a}.

\subsection{Proof of Theorem \ref{sum theorem}}
In this part we use Theorem \ref{main theorem} to prove  Theorem \ref{sum theorem}. First, we show that the sequence of $nh_n=n\E h(W_n|\bm{\eta}_n)$ is superadditive, \textit{i.e.}, $(m+n)h_{m+n} \geq  mh_m + nh_n $ for all $m,n$. In fact, let $W_i^j = \frac{\xi_i+\xi_{i+1}+\cdots+\xi_j}{\sqrt{j-i+1}}$ and $\bm{\eta}_i^j = (y_i,y_{i+1},\cdots,y_j)$, from Corollary \ref{entropy power} we obtain
\begin{equation}
\begin{aligned}
h\left(W_{n+m} | \bm{\eta}_{n+m}\right)&= h\left(\frac{\sqrt{n}}{\sqrt{m+n}}W_n + \frac{\sqrt{m}}{\sqrt{m+n}}W_{n+1}^{n+m}| \bm{\eta}_{n+m}\right)\\
& \geq \frac{n}{m+n}h\left(W_n|\bm{\eta}_{n+m}\right) + \frac{m}{m+n}h\left(W_{n+1}^{n+m}|\bm{\eta}_{n+m}\right)\\
& = \frac{n}{m+n}h\left(W_n|\bm{\eta}_n\right) + \frac{m}{m+n}h\left(W_{n+1}^{n+m}|\bm{\eta}_{n+1}^{n+m}\right).
\end{aligned}
\end{equation}
Taking expectation on both sides we have
\begin{equation}
\begin{aligned}
\E h\left(W_{n+m} | \bm{\eta}_{n+m}\right) &\geq \frac{n}{m+n}\E h\left(W_n|\bm{\eta}_n\right)+\frac{m}{m+n}\E h\left(W_{n+1}^{n+m}|\bm{\eta}_{n+1}^{n+m}\right)\\
&= \frac{n}{m+n}\E h\left(W_n|\bm{\eta}_n\right)+\frac{m}{m+n}\E h\left(W_m|\bm{\eta}_m\right),
\end{aligned}
\end{equation}
which shows the superadditivity of $nh_n$. It is well known that $\frac{a_n}{n} \xrightarrow{n\rightarrow\infty} \sup\limits_n \frac{a_n}{n}$ if the sequence $\{a_n,n\geq 1\}$ is superadditive. As a result, we have
\begin{equation}\label{hn limit}
\lim\limits_{n\rightarrow\infty}h_n = \sup\limits_n h_n.
\end{equation}
Clearly, with \eqref{hn limit} in hand, it is enough to prove Theorem \ref{main theorem} for $n = 2^k$. As explained in \eqref{increase on h_Sn}, the reason to take $n$ as power of 2 is that the difference $h_{2n} - h_n$ can be written as the form of entropy jump:
\begin{equation}\label{hn entropy jump}
h_{2n} - h_n= \E\left[h\left(\frac{W_n + W'_n}{\sqrt{2}}| \bm{\eta}_{2n}\right) - \frac{h(W_n|\bm{\eta}_n) + h(W'_n|\bm{\eta}'_n)}{2}\right],
\end{equation}
where $W_n'$ and $\bm{\eta}'_n$ are independent copies of $W_n$ and $\bm{\eta}_n$, respectively. We note here that \eqref{hn limit} and \eqref{hn entropy jump} are the only properties of differential entropy used in our proof, which are analog to \eqref{entropy condition 1} and \eqref{entropy condition 2}.

\textbf{Proof of Theorem \ref{sum theorem}.} Let $U_n = W_{2^n}, T_n = \bm{\eta}_{2^n}$. We first show that $(U_n,T_n)$ satisfies the assumptions in Theorem \ref{main theorem}. Firstly,
\begin{equation}
\V(U_n|T_n) = \V\left(\frac{\xi_1 + \cdots \xi_{2^n}}{\sqrt{2^n}}|\bm{\eta}_{2^n}\right) = \frac{\V(\xi|\eta_1)+\cdots +\V(\xi|\eta_{2^n})}{2^n} \geq a.
\end{equation}
Besides, from Lemma \ref{Fisher ineq} we obtain
\begin{equation}
\begin{aligned}
J(U_n|\eta_n) & = J\left(\frac{\xi_1 + \cdots \xi_{2^n}}{\sqrt{2^n}}|\bm{\eta}_{2^n}\right) \leq \frac{1}{2^n}\sum\limits_{k=1}^{2^n}J\left(\xi_k|\bm{\eta}_{2^n}\right) = \frac{1}{2^n}\sum\limits_{k=1}^{2^n}J\left(\xi_k|\eta_k\right)
\end{aligned}
\end{equation}
Taking expectation on both sides we have
\begin{equation}
\E J(U_n|T_n)\leq J(\xi|\eta) <\infty.
\end{equation}
In addition, it is not hard to verify that
\begin{equation}
\begin{aligned}
\E[L_{U_n|T_n}(R)] = \E\left[\bar{W}_n^2\bm{1}_{\{|\bar{W}_n|\geq R\}}\right],
\end{aligned}
\end{equation}
where $\bar{W}_n$ is defined as
$$\bar{W}_n = \frac{\sum\limits_{k=1}^n (\xi_k - \E(\xi_k|\eta_k))}{\sqrt{n}}.$$
Let
\begin{equation}
l(R) = \sup\limits_n \E(\bar{W}_n^2\bm{1}_{|\bar{W}_n|\geq R}),
\end{equation}
since $\{\bar{W}_n^2,n\geq1\}$ is uniformly integrable(see proof in Appendix \ref{proof of 2th moment}),
$l(R)$ decreases to 0 as $R\rightarrow\infty$.

Clearly, \eqref{entropy condition 1} and \eqref{entropy condition 2} follow from \eqref{hn limit} and \eqref{hn entropy jump}. All of the discussion above show that $(U_n,T_n)$ satisfies all conditions of Theorem \ref{main theorem}. Consequently, from \eqref{general D} we conclude that
\begin{equation}\label{D Un tn}
D(U_n|T_n) \xrightarrow{\text{Pr.}} 0,\ n\rightarrow\infty.
\end{equation}
Let $N = 2^n$, $h_N(\bm{\eta}) = h(W_N|\bm{\eta}_N), V_N(\bm{\eta}) = \V(W_N|\bm{\eta}_N)$, then \eqref{D Un tn} is equivalent to
\begin{equation}
\frac{1}{2}\log 2\pi e V_N(\bm{\eta}) - h_N(\bm{\eta}) \xrightarrow{\text{Pr.}} 0.
\end{equation}
Notice that $V_N(\bm{\eta}) = \frac{\V(\xi|\eta_1)+\cdots+\V(\xi|\eta_N)}{N}$, the law of large numbers leads to $V_N(\bm{\eta})\xrightarrow{\text{Pr.}} \E\V(\xi|\eta) = \sigma^2$, which implies that
\begin{equation}
h_N(\bm{\eta})\xrightarrow{\text{Pr.}}\frac{1}{2}\log \left(2\pi e\sigma^2\right).
\end{equation}
By the entropy jump inequality \eqref{EJI},
\begin{equation}
h_N(\bm{\eta}) \geq \frac{h(\xi|\eta_1) + \cdots + h(\xi|\eta_N)}{N} := H_N.
\end{equation}
Using the law of large numbers again we have $H_N \xrightarrow{a.s.} \E h(\xi|\eta) > -\infty$. Now by Fatou's lemma,
\begin{equation}\label{liminf}
\E\left[\liminf\limits_{N\rightarrow \infty} (h_N(\bm{\eta})-H_N)\right] \leq \liminf\limits_{N\rightarrow \infty}\E\left[h_N(\bm{\eta})-H_N\right] = \liminf\limits_{N\rightarrow \infty}h_N - \E h(\xi|\eta).
\end{equation}
Without loss of generality, we can assume $h_N(\bm{\eta})\xrightarrow{a.s.}\frac{1}{2}\log \left(2\pi e\sigma^2\right)$ (or we just take an almost surely convergent subsequence), then
\begin{equation}
\liminf\limits_{N\rightarrow \infty} (h_N(\bm{\eta})-H_N) = \frac{1}{2}\log\left(2\pi e\sigma^2\right) - \E h(\xi|\eta).
\end{equation}
Therefore, from \eqref{liminf} we have $\liminf\limits_{N\rightarrow \infty}h_N \geq \frac{1}{2}\log 2\pi e\sigma^2$. On the other hand, by Jensen's inequality,
\begin{equation}
h_N \leq \frac{1}{2}\E\log\left(2\pi e V_N(\bm{\eta})\right) \leq \frac{1}{2}\log\left(2\pi e \E V_N(\bm{\eta})\right) = \frac{1}{2}\log 2\pi e\sigma^2,
\end{equation}
which shows that \eqref{main result} holds for $N=2^n$. As discussed at the beginning of this section, it is enough to imply Theorem \ref{main theorem} since $nh_n$ is superadditive.\pe

\subsection{Rate of Convergence with Independent Component}
We can establish the rate of convergence for the limit theorem. Indeed,  we note that there is a further form for \eqref{main to sum1}
\begin{equation}\label{independent sum form}
(\xi_n|\eta_n=y)=(\frac{\xi_0^1+\xi_0^2+\cdots+\xi_0^{2^n}}{2^{n/2}}|\eta_0^1=y_1,\eta_0^2=y_2,\cdots ,\eta_0^{2^n}=y_{2^n})\overset{d}{=}\frac{\eta_1+\eta_2+\cdots+\eta_{2^n}}{2^{n/2}},
\end{equation}
where  $\{\eta_i, i=1,2,\cdots,2^n\}$ are independent with distribution
\begin{equation*}
\eta_i\overset{d}{=}(\xi_0|\eta_0=y_i).
\end{equation*}
Thus, Theorem\ref{main theorem} is closely related with CLT for classical independent sum. \cite[Theorem 1.2, (12.6)]{Esseenbound} gave the speed of convergence in KL-divergence sense for independent sum.
\begin{lemma}\label{Essbound}
	For independent sum $T_n=\sum_{i=1}^n \xi_i$,  $ \sum_{i=1}^n \V(\xi_i):=B_n$, if $\sup_i D(\xi_i)\le D<\infty$, then
	$$D(T_n)\le ce^{62D}L_4,$$
	where $c$ is a constant,
	$$D(\xi):=\KL(\xi\|\mathcal{N}(\E \xi, \V(\xi))),$$
	$$L_s:=\sum_{i=1}^n \E |\xi_i-\E \xi_i|^s/B_n^{s/2}.$$
\end{lemma}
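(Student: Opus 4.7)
The plan is to follow the Edgeworth-expansion strategy that is standard for entropic Berry-Esseen bounds (the Bobkov-Chistyakov-G\"otze approach). After normalizing so that $B_n=1$, the first step is to extract regularity from the hypothesis: the bound $D(\xi_i)\le D$ combined with Pinsker-type inequalities forces each density $f_i$ to be close enough to a Gaussian that its characteristic function $\hat f_i(t)$ enjoys Gaussian-type decay on a non-trivial frequency band. This is what allows working at the level of densities rather than distribution functions, and is the reason the exponential factor $e^{62D}$ appears later.

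The core step is to establish an Edgeworth-type pointwise expansion $f_{T_n}(x) = g(x)(1+R_n(x))$, with $g$ the standard Gaussian density and $R_n$ a remainder controlled in terms of cumulants. One proceeds via Fourier inversion of $\hat f_{T_n}(t) = \prod_i \hat f_i(t)$, performing a cumulant expansion of $\log \hat f_i$ and tracking the error. On the central frequency band the remainder is $O(L_4)$ by matching of first two cumulants and a careful bound on third and fourth cumulants; on the tail band it is absorbed using the smoothness extracted from $D$ in the previous step.

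The KL bound then follows from the identity $D(T_n) = \int g\, (1+R_n)\log(1+R_n)\, dx$. Because $\int g R_n\, dx = 0$ and the first two moments match, the Taylor expansion $\log(1+u) = u - u^2/2 + O(u^3)$ shows that the leading contribution is $\tfrac12\int g R_n^2\, dx$, which the Edgeworth estimates control by a multiple of $L_4$. The higher-order terms $\int g R_n^3 / (1+R_n)^2$ are handled by combining pointwise bounds on $R_n$ with moment estimates, and it is here that the prefactor $e^{62D}$ enters: it ensures that integrability against $g$ is preserved even after dividing by $(1+R_n)$.

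The main obstacle will be the uniform-in-$n$ control of $R_n$ in the tails of $g$. The Edgeworth remainder can be large where the Gaussian density is tiny, so one cannot simply use sup-norm bounds; instead one must split the real line into a bulk region (where Taylor expansion of the logarithm is valid) and a tail region (where a cruder bound using the full KL-regularity is needed). Reconciling the two bounds without destroying the $L_4$ rate — specifically, arranging the cutoffs so that the tail contribution is still $O(e^{62D}L_4)$ — is the technical heart of the argument.
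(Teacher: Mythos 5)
The paper does not actually prove this lemma: it is imported verbatim from Bobkov--Chistyakov--G\"otze \cite{Esseenbound} (Theorem 1.2 and inequality (12.6) there), and the authors' ``proof'' is the citation. So the relevant comparison is between your sketch and the argument in that reference. Your outline does point at the correct strategy --- an Edgeworth-type expansion of the density of $T_n$ obtained by Fourier inversion, a bulk/tail splitting of both the frequency variable and the spatial variable, and the use of the uniform bound $D(\xi_i)\le D$ to supply the regularity that makes a density-level (rather than distribution-function-level) expansion possible. That is genuinely the architecture of the BCG proof, and the observation that $\int g R_n\,dx=0$ with matched first two moments, so that the leading KL contribution is the quadratic term $\tfrac12\int gR_n^2\,dx$, is also the right mechanism.

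However, as written this is a proof plan, not a proof: every step you describe is itself a substantial piece of \cite{Esseenbound}. Two points in particular are asserted rather than established and are exactly where the difficulty lies. First, the claim that $D(\xi_i)\le D$ ``forces Gaussian-type decay of $\hat f_i$ on a non-trivial band'' does not follow from Pinsker alone: Pinsker gives only $|\hat f_i(t)-\hat g_i(t)|\le\sqrt{2D}$ uniformly in $t$, which is vacuous once $D\ge 1/2$, whereas the lemma must hold for all finite $D$ with the explicit constant $e^{62D}$; the actual argument has to combine this with separate bounds (e.g.\ on $\|f_{T_n}\|_\infty$ and on $D(T_n)\le D$ itself, which disposes of the regime where $L_4$ is not small) and the bookkeeping that produces the exponent $62$ is nontrivial. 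Second, the control of $\int g\,R_n^3/(1+R_n)^2$ and of the spatial tail region, which you correctly identify as ``the technical heart,'' is left entirely open --- you state the obstacle without resolving it. Since the paper itself treats this as a black-box citation, the appropriate fix is simply to cite \cite{Esseenbound} rather than to reprove it; if a self-contained proof were required, the proposal in its current form would not suffice.
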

Since  Lemma \ref{Essbound} needs an upper bound for $D(\xi_i)$, we shall use another lemma   \cite[(10)]{S-EPI} to go through it.
\begin{lemma}\label{control D by VJ}
	Given $\xi$, $\E \xi=0, \V(\xi)<\infty$, then
	\begin{equation}
	D(\xi)\le \frac{1}{2}\log(\V(\xi) J(\xi)).
	\end{equation}
\end{lemma}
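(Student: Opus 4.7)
The plan is to flow $\xi$ along the heat semigroup (addition of independent Gaussian noise) and integrate a differential identity along this flow. Specifically, I set $\xi_t := \xi + \sqrt{t}\,G$ where $G$ is an independent standard Gaussian, so $V_t := \V(\xi_t) = \V(\xi) + t$, and write
$$D(\xi_t) = \tfrac{1}{2}\log(2\pi e V_t) - h(\xi_t).$$
De Bruijn's identity (Lemma \ref{Bruijn's identity}) then gives
$$\frac{d}{dt}D(\xi_t) = \frac{1}{2V_t} - \frac{1}{2}J(\xi_t),$$
which is non-positive by the Cram\'er--Rao bound \eqref{CR bound}, so $D(\xi_t)$ is monotone non-increasing.

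Next I would derive Stam's inequality $1/J(\xi_t) \geq 1/J(\xi) + t$ from Lemma \ref{Fisher ineq} by a rescaling argument: writing $\xi + \sqrt{t}\,G = \lambda X' + \mu Y'$ with $X' = \xi/\lambda$, $Y' = \sqrt{t}\,G/\mu$, $\lambda^2+\mu^2=1$, applying Lemma \ref{Fisher ineq} and using $J(aZ) = J(Z)/a^2$ gives $J(\xi_t)\le \lambda^4 J(\xi)+\mu^4/t$; optimizing the right-hand side over $\lambda$ yields Stam's inequality, and thus $J(\xi_t)\le J(\xi)/(1+tJ(\xi))$. Plugging this into the derivative expression and integrating from $0$ to $T$ gives
$$D(\xi) - D(\xi_T) = \int_0^T\!\!\left[\tfrac{1}{2}J(\xi_t) - \tfrac{1}{2V_t}\right]dt \le \int_0^T\!\!\left[\tfrac{J(\xi)}{2(1+tJ(\xi))} - \tfrac{1}{2(\V(\xi)+t)}\right]dt = \tfrac{1}{2}\log\tfrac{(1+TJ(\xi))\V(\xi)}{\V(\xi)+T},$$
which tends to $\tfrac{1}{2}\log(\V(\xi)J(\xi))$ as $T\to\infty$.

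To close the argument I need $\lim_{T\to\infty}D(\xi_T)=0$, which follows from the elementary subadditivity bound $h(\xi_T) \ge h(\xi_T\mid\xi) = h(\sqrt{T}G) = \tfrac{1}{2}\log(2\pi e T)$, whence
$$0 \le D(\xi_T) \le \tfrac{1}{2}\log(V_T/T) = \tfrac{1}{2}\log\bigl(1+\V(\xi)/T\bigr) \longrightarrow 0.$$
The case $J(\xi)=\infty$ is trivial since the right-hand side of the lemma is then $+\infty$.

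The main obstacle is the delicate handling of the integral: each of the two terms $\tfrac{1}{2}J(\xi_t)$ and $\tfrac{1}{2V_t}$ decays like $1/t$ for large $t$, so its individual integral over $[0,\infty)$ diverges logarithmically; only their difference is integrable. This forces the argument to be carried out as a limit $T\to\infty$ with both logarithmic contributions tracked simultaneously, after which a precise cancellation yields the sharp constant $\tfrac{1}{2}\log(\V(\xi)J(\xi))$. A secondary technical point is to justify differentiability of $h(\xi_t)$ at $t=0$ in the edge case $J(\xi)=\infty$, which is avoided by treating that case separately as noted above.
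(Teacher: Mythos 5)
Your proof is correct. Note, however, that the paper itself does not prove this lemma at all: it is imported verbatim as \cite[(10)]{S-EPI}, so there is no in-paper argument to compare against. Your reconstruction is the standard heat-flow proof of that cited inequality, and every ingredient you invoke is either available in the paper or elementary: de Bruijn's identity is exactly the paper's Lemma \ref{Bruijn's identity} (already stated for the perturbation $X+\sqrt{t}G$, so no translation from the Ornstein--Uhlenbeck normalization is needed), the Cram\'er--Rao bound is \eqref{CR bound}, and your rescaling derivation of Stam's inequality $1/J(\xi_t)\ge 1/J(\xi)+t$ from Lemma \ref{Fisher ineq} is right (with $u=\lambda^2$ the minimum of $u^2J(\xi)+(1-u)^2/t$ is indeed the harmonic combination $J(\xi)/(1+tJ(\xi))$). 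The integral computation and the cancellation of the two logarithmically divergent terms check out, as does the tail estimate $0\le D(\xi_T)\le \tfrac12\log(1+\V(\xi)/T)\to 0$ via $h(\xi_T)\ge h(\xi_T\mid\xi)$. The only point you should make fully explicit is the application of the fundamental theorem of calculus down to $t=0$, i.e.\ that $D(\xi_{t})\to D(\xi)$ as $t\to 0^{+}$; when $J(\xi)<\infty$ this is exactly the content of the paper's Lemma \ref{continuity on entropy} (continuity of $t\mapsto h$ at $t=0$) together with the bound $J(\xi_t)\le J(\xi)$ which keeps the integrand bounded near the origin, and you have correctly disposed of the case $J(\xi)=\infty$ separately. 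With that remark your argument is a complete, self-contained proof of a statement the paper only cites, which is a genuine added value.
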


Thanks to Lemma \ref{Essbound} and Lemma \ref{control D by VJ}, we can deduce the rate theorem as follows.
\begin{theorem}\label{rate theorem}
	Let $(\xi,\eta)$ be a two-dimensional random variable, and suppose that  $(\xi_1,\eta_1),(\xi_2,\eta_2),\cdots,(\xi_n,\eta_n)$ are independent and identically distributed as $(\xi,\eta)$. Let $W_n = (\xi_1 + \xi_2 +\cdots+\xi_n) / \sqrt{n}$, and $\bm{\eta}_n = (\eta_1,\eta_2,\cdots,\eta_n)$. Assume the following conditions hold.
	\begin{enumerate}
		\item $(\xi|\eta) \prec \mathcal{L}, \ \P_\eta-a.s.$, where $\mathcal{L}$ denotes Lebesgue measure and $\mu \prec \mathcal{L}$ means $\mu$ is absolutely continuous with respect to $\mathcal{L}$ .   Further, for $\P_\eta-a.s.$, the conditional random variable $(\xi|\eta)$ has absolutely continuous density.
		\item (Finite $s$th moment for variance and Fisher information)  There   exists $s>64$ s.t.
		\begin{equation}
		\E (\V(\xi|y))^s<\infty,\;\; \E (J(\xi|y))^s<\infty.
		\end{equation}
	\end{enumerate}
	Then $\forall \varepsilon>0$,
	\begin{equation}\label{rate result}
	\E D(W_n|\bm{\eta}_n)=O\left(\frac{1}{n^{\frac{s-64}{s+64}-\varepsilon}} \right).
	\end{equation}
\end{theorem}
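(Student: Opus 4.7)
The idea is to reduce Theorem \ref{rate theorem} to the classical (unconditional) independent--sum Berry--Esseen bound of Lemma \ref{Essbound} by conditioning on $\bm{\eta}_n=\bm{y}_n$, and then to optimize Hölder's inequality against the moment hypotheses on $\V(\xi|\eta)$ and $J(\xi|\eta)$.

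First, since the pairs $(\xi_i,\eta_i)$ are \textit{i.i.d.}, conditioning on $\bm{\eta}_n=\bm{y}_n$ makes $\xi_1,\dots,\xi_n$ independent with $\xi_i\overset{d}{=}(\xi|\eta=y_i)$. Because $D(\cdot)$ is invariant under deterministic rescaling, $D(W_n|\bm{y}_n)=D(T_n|\bm{y}_n)$ with $T_n=\sum_i\xi_i$, so we are in the set-up of Lemma \ref{Essbound}. Applying Lemma \ref{Essbound} and Lemma \ref{control D by VJ} conditionally, and writing $V_i=\V(\xi|\eta=y_i)$, $J_i=J(\xi|\eta=y_i)$, $m_4(y_i)=\E[(\xi-\E(\xi|\eta=y_i))^4|\eta=y_i]$, I would obtain
\begin{equation*}
D(W_n|\bm{y}_n)\le c\exp\bigl(62\max_i D(\xi|\eta=y_i)\bigr)\cdot\frac{\sum_i m_4(y_i)}{(\sum_i V_i)^2}\le c\max_i(V_iJ_i)^{31}\cdot\frac{\sum_i m_4(y_i)}{(\sum_i V_i)^2}.
\end{equation*}

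Next, I would handle the denominator. Without a uniform lower bound on $V(\xi|\eta)$, the denominator $(\sum_i V_i)^2$ must be controlled indirectly. Cramér--Rao gives $V_iJ_i\ge 1$, hence $V_i\ge 1/J_i$, and Cauchy--Schwarz yields $\sum_i V_i\ge n^2/\sum_i J_i$, so $(\sum_i V_i)^2\ge n^4/(\sum_i J_i)^2$. Substituting back leaves an upper bound that only involves the \textit{i.i.d.}~quantities $V_i,J_i,m_4(y_i)$ and a factor $n^{-4}$. Taking expectation and applying Hölder's inequality with conjugate exponents $q_1,q_2$ with $1/q_1+1/q_2=1$, I would bound the maximum by $\max_i X_i^{31}\le(\sum_iX_i^{31q_1})^{1/q_1}$ (with $X_i=V_iJ_i$), giving an $L^{q_1}$ factor of size $O(n^{1/q_1})$ provided $31q_1\le s/2$, which is guaranteed by the moment hypotheses through $\E(V J)^{s/2}\le(\E V^s)^{1/2}(\E J^s)^{1/2}<\infty$. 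The remaining $L^{q_2}$ factor involving $\sum_i m_4(y_i)$ and $(\sum_iJ_i)^2$ is of order $n^{-1}$ by Rosenthal/Jensen bounds for sums of \textit{i.i.d.}~positive variables. Choosing $q_1=(s+64)/128$ (which satisfies $q_1\le s/62$ precisely when $s>64$, matching the hypothesis) turns $n^{1/q_1-1+\varepsilon}$ into $n^{-(s-64)/(s+64)+\varepsilon}$, yielding \eqref{rate result}.

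\textbf{Main obstacle.} The hardest part is simultaneously controlling two sources of loss: the maximum $\max_i(V_iJ_i)^{31}$, which grows like $n^{1/q_1}$, and the $L^{q_2}$--norm of $L_4$, which is only of order $n^{-1}$ when one has enough moments of $J$ and $m_4$. The moment hypothesis $s>64$ is exactly what is needed to make the Hölder exponents admissible while still producing a positive convergence rate. A secondary technical nuisance is that $\E m_4^{\,q}$ is not directly assumed to be finite; one would handle this either by truncating on the high--probability event $\{\max_i V_iJ_i\le T\}$ (with $T$ chosen as a small power of $n$, using the tail bound $\P(VJ>T)\le T^{-s/2}\E(VJ)^{s/2}$ from Markov's inequality) or by deriving $m_4$ moments from $V$ and $J$ moments via standard smoothing/truncation estimates. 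The $\varepsilon$--loss in the final exponent is the usual price paid for keeping strict inequality in these Hölder/truncation constraints.
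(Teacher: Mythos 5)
Your proposal follows essentially the same route as the paper: condition on $\bm{\eta}_n=\bm{y}_n$ to reduce to an independent sum, apply Lemma \ref{Essbound} with the factor $e^{62D}$ controlled through Lemma \ref{control D by VJ} as $(VJ)^{31}$, use the Cram\'er--Rao bound $V_iJ_i\ge 1$ to handle the denominator $(\sum_iV_i)^2$, and then trade off a truncation/H\"older parameter against the $s$-th moment hypothesis to extract the rate $n^{-(s-64)/(s+64)+\varepsilon}$. The paper does exactly this, with the truncation implemented as the event $A(M)=\{\V(\xi|\eta=y)\le M,\ J(\xi|\eta=y)\le M\}$, a crude bound $D\le C(t)(\V^t+J^t)$ off $(A(M))^n$, and the choice $M\sim n^{(2-t)/(64+s(1-t))}$.

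One concrete caveat: your \emph{primary} route (one-shot H\"older on $\max_i(V_iJ_i)^{31}$ without truncation) does not close for all $s>64$. With $q_1=(s+64)/128$ the conjugate exponent is $q_2=(s+64)/(s-64)$, and the remaining factor $n^{-4}(\sum_iJ_i)^2\sum_im_4(y_i)$ then requires moments of $J$ of order at least $2q_2=2(s+64)/(s-64)$, which exceeds $s$ whenever $64<s<33+\sqrt{1217}\approx 67.9$ (and the $m_4$ moments are worse still, as you note, since they are not among the hypotheses). So the truncation variant you offer as a fallback is not optional --- it is the proof, and it is precisely the paper's argument: bound everything by powers of $M$ on the good event, use Markov with the $s$-th moments for the bad event, and pay the $\varepsilon$-loss through the auxiliary H\"older exponent $t$. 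Your observation that the fourth conditional moment is not controlled by the stated hypotheses is a fair one; the paper's own proof quietly invokes $\E\xi^4<\infty$ at step $(c)$ without listing it as an assumption, so this point deserves the explicit treatment you sketch.
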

\pf  For any realization $\bm{\eta}_n=\bm{y}_n=(y_1,y_2,\cdots,y_n)$, we can write the conditional distribution as
\begin{equation}
(W_n|\bm{\eta}_n=\bm{y}_n)\overset{d}{=}\eta_1+\eta_2+\cdots+\eta_{n},
\end{equation}
where $\eta_i$s are independent and
$$\eta_i\overset{d}{=}\frac{(\xi|\eta=y_i)}{\sqrt{n}}.$$
First we control the upper bound for $D(\eta_i)$ in order to satisfy the uniformly upper bound condition for $D$ in Lemma \ref{Essbound}. In fact by Lemma \ref{control D by VJ}
\begin{equation}
\begin{aligned}
D(\eta_i)=\frac{1}{2}\log(\V(\xi|\eta=y_i)J(\xi|\eta=y_i)), \;\forall i.
\end{aligned}
\end{equation}

Next, define
$$A(M):=\{ y:   \V(\xi|\eta=y)\le M, J(\xi|\eta=y)\le M \},$$
then by Lemma \ref{Essbound}, $\forall \bm{y}_n=(y_1,\cdots,y_n)\in (A(M))^n$,\
\begin{equation}\label{bound for low variance AM}
\begin{aligned}
D(W_n|\bm{\eta}_n=\bm{y}_n)&\le cM^{62}\dfrac{\sum_{i=1}^{n}|\eta_i-\E\eta_i|^4}{(\sum_{i=1}^{n}\V(\eta_i))^2}\\
&=cM^{62}\dfrac{\sum_{i=1}^{n}|\eta_i|^4+(\E\eta_i)^4}{(\sum_{i=1}^{n}\V(\eta_i))^2}\\
&\le cM^{64}\sum_{i=1}^{n}|\eta_i|^4+(\E\eta_i)^4,
\end{aligned}
\end{equation}
where the last inequality holds since $\V(\xi)J(\xi)\ge 1$. On the other hand,
use condition 4 and Chebyshev inequality, we obtain
\begin{equation}\label{pro bound for AM}
\P(A(M))\ge 1-\dfrac{\E (\V(\xi|y))^s+ \E (J(\xi|y))^s}{M^s}.
\end{equation}
In addition, for any $\bm{y}_n$, we use Lemma \ref{control D by VJ} and Lemma 1.3 in \cite{Carlen1991} to obtain $\forall t>0$,
\begin{equation}\label{control D by v,J}
\begin{aligned}
D(W_n|\bm{\eta}_n)=\bm{y}_n)&C(t)(\V^t(W_n|\bm{\eta}_n)+J^t(W_n|\bm{\eta}_n))\\
&\le C(t)(\V^t(W_n|\bm{\eta}_n)+\left( \frac{1}{n}\sum_{i=1}^nJ(\xi_i|y_i) \right)^t).
\end{aligned}
\end{equation}

Combine the above results, we have for all $t\in(0,1),$
\begin{align}
\E D(W_n|\bm{\eta}_n)=\E D(W_n|\bm{\eta}_n=\bm{y}_n)&1_{\bm{y}_n\in (A(M))^n}+\E D(W_n|\bm{\eta}_n=\bm{y}_n)1_{\bm{y}_n\notin (A(M))^n}\\
\overset{(a)}{\le } \E cM^{64}(\sum_{i=1}^{n}|\eta_i(y_i)|^4+(\E\eta_i)^4)&1_{\bm{y}_n\in (A(M))^n}+\E C(t)(\V^t(W_n|\bm{\eta}_n)\\+
&\left( \frac{1}{n}\sum_{i=1}^nJ(\xi_i|y_i) \right)^t)1_{\bm{y}_n\notin A(M)} \nonumber \\
\overset{(b)}{\le }   \E cM^{64}(\sum_{i=1}^{n}|\eta_i(y_i)|^4+(\E\eta_i)^4)&+C(t)\left(\V(\xi)+\E J(\xi|\eta)\right)^t\P^{1-t}\left( \bm{y}_n\notin A(M)\right)\\
\overset{(c)}{\le }   cM^{64}\dfrac{\E|\xi|^4}{n}+C&\left(1-(1-\dfrac{\E (\V(\xi|y))^s+ \E (J(\xi|y))^s}{M^s})^n\right)^{1-t}\\
\overset{(d)}{\le }  \dfrac{C_1M^{64}}{n}+&C_2\left(\frac{n}{M^s} \right)^{1-t}.
\end{align}
where $(a)$ holds due to \eqref{bound for low variance AM} and \eqref{control D by v,J}; $(b)$ comes from Cauchy inequality; $(c)$ is deduced by the definition of $\eta_i$, Jensen's inequality and \eqref{pro bound for AM}; $(d)$ is from Taylor expansion of $x^n$ around $x=1$. Finally, take $M=n^{-\frac{2-t}{62+(1-t)s}}$, then the proof is completed.

\e

\section{Asymptotic Gaussianity Measured by Fisher infomation} \label{section5}
In this section, we investigate the asymptotic Gaussianity measured by Fisher information. We  consider the case where $X$ is perturbed by Gaussian, that is , $X$ is  decomposed into the sum of $X_0$ and an independent Gaussian $G_a$, as  \eqref{IGD-n}, and deduce the asymptotic Gaussianity of $W_n$ by means of Fisher information.
\begin{theorem}\label{Fisher limit}
	Under the same setting of Theorem \ref{main theorem}, if condition 3 in Theorem \ref{main theorem} holds and we further assume $X_n$
	has the form
	\begin{equation}\label{IGD-n}
	X_n=X_{0,n}+G_{a,n},
	\end{equation}
	where $G_{a,n}\sim \mathcal{N}(0,a)$   is independent with $(X_{0,n},Y_n)$, then
	\begin{equation}\label{J limit}
	\lim\limits_{n\rightarrow\infty} \E J(X_n|Y_n) = 1/\sigma^2,
	\end{equation}
	where  $\sigma^2= \lim_{n\to\infty}\E \V(X_n|Y_n)$ is the same as in \eqref{V limit}.
\end{theorem}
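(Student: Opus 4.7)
The plan is to sandwich $\E J(X_n|Y_n)$ between two quantities, both converging to $1/\sigma^2$, with the lower bound coming from Cramér–Rao applied conditionally and the upper bound coming from the asymptotic Gaussianity established in Theorem \ref{main theorem}. The decomposition \eqref{IGD-n} will play a double role: it gives a uniform bound on $J(X_n|Y_n)$ through convolution smoothing, and it places all conditional densities into the small-Gaussian-perturbation family on which Fisher information is continuous.

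For the lower bound, I would apply the Cramér–Rao inequality \eqref{CR bound} pointwise in $y$ to obtain $J(X_n|Y_n=y)\ge 1/\V(X_n|Y_n=y)$. Taking expectation in $y$ and invoking Jensen's inequality (the convexity of $x\mapsto 1/x$ on $(0,\infty)$, valid since $\V(X_n|Y_n)\ge a>0$) yields
\[
\E J(X_n|Y_n)\ \ge\ \E\!\left[\frac{1}{\V(X_n|Y_n)}\right]\ \ge\ \frac{1}{\E\V(X_n|Y_n)}.
\]
Since \eqref{V limit} in Theorem \ref{main theorem} gives $\E\V(X_n|Y_n)\to\sigma^2$, this yields $\liminf_n\E J(X_n|Y_n)\ge 1/\sigma^2$.

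For the matching upper bound I would exploit \eqref{IGD-n} twice. First, conditionally on $Y_n=y$ the independence of $G_{a,n}$ from $(X_{0,n},Y_n)$ gives the convolution representation $f_{X_n|Y_n=y}=f_{X_{0,n}|Y_n=y}\ast g_a$. A standard Cauchy–Schwarz argument on the derivative of a convolution (equivalently a rescaled application of Lemma \ref{Fisher ineq}/Blachman–Stam) shows $J(X_n|Y_n=y)\le J(g_a)=1/a$, so $\{J(X_n|Y_n)\}_n$ is uniformly bounded, hence uniformly integrable. Second, the convolution with $g_a$ places every conditional density into the ``Gaussian-perturbed'' family on which the continuity lemma referenced in the excerpt (Lemma \ref{continuity of Fisher information}) applies; the same is true of the matching Gaussian $g_{\V(X_n|y)}$, since $\V(X_n|Y_n)\ge a$ means $g_{\V(X_n|y)}=g_{\V(X_n|y)-a}\ast g_a$.

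With these ingredients in place, Theorem \ref{main theorem} supplies $D(X_n|Y_n)\xrightarrow{\text{Pr.}}0$, and Pinsker's inequality converts this into $L^1$ convergence in probability of $f_{X_n|Y_n=y}$ to the Gaussian with the same conditional mean and variance. The continuity lemma then upgrades this to
\[
\bigl|J(X_n|Y_n)-1/\V(X_n|Y_n)\bigr|\ \xrightarrow{\text{Pr.}}\ 0.
\]
Combining with \eqref{general variance}--\eqref{V limit} (so $1/\V(X_n|Y_n)\to 1/\sigma^2$ in probability on the set $\{\V\ge a\}$) gives $J(X_n|Y_n)\xrightarrow{\text{Pr.}}1/\sigma^2$, and the uniform bound $J(X_n|Y_n)\le 1/a$ allows bounded convergence to conclude $\E J(X_n|Y_n)\to 1/\sigma^2$. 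The main obstacle is the last step: ordinary $L^1$ convergence of densities does not imply convergence of Fisher information (only lower semicontinuity, as noted in \cite{F-CLT}), and this is precisely why the Gaussian-perturbation hypothesis \eqref{IGD-n} is indispensable—it is exactly the condition under which Lemma \ref{continuity of Fisher information} restores full continuity and thereby the matching upper bound $\limsup_n\E J(X_n|Y_n)\le 1/\sigma^2$.
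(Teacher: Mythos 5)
Your proposal is correct and follows essentially the same route as the paper: Cramér–Rao plus an averaging inequality for the lower bound, and the Gaussian-perturbation continuity of Fisher information (Lemma \ref{continuity of Fisher information}) combined with $D(X_n|Y_n)\xrightarrow{\text{Pr.}}0$, $\V(X_n|Y_n)\to\sigma^2$, and the uniform bound $J(X_n|Y_n)\le 1/a$ for the upper bound. The one point to tighten is that membership in $\mathcal{I}_{a,l}$ requires a \emph{common} tail-variance bound $l$, which the conditional densities satisfy only on a set of realizations of probability at least $1-\alpha$ (via condition 3 and Proposition \ref{F and L condition}), so the uniform-continuity step must be applied on that high-probability set, exactly as the paper does with its set $U_n$.
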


Taking advantage of this decomposition, we claim that the assumption 1 and 2 in Theorem \ref{main theorem} are naturally satisfied. Actually,
\begin{equation}\label{130}
\V(X_n|Y_n) = \V(X_{0,n}+G_{a,n}|Y_n) \geq \V(G_{a,n}|Y_n) = a.
\end{equation}
Besides, since convolution decreases Fisher information (\textit{e.g.}, see \cite[(2.2)]{Bar1986}), we have
\begin{equation}\label{J decrease}
J(X_n|Y_n) = J(X_{0,n}+G_{a,n}|Y_n) \leq J(G_{a,n}|Y_n) = \frac{1}{a}.
\end{equation}
In addition, \eqref{IGD-n} also implies that
\begin{equation}
h(X_n|Y_n) = h(X_{0,n}+G_{a,n}|Y_n) \geq h(G_{a,n}|Y_n) = \frac{1}{2}\log 2\pi e a.
\end{equation}
Using this uniform lower bound and the Fatou's lemma argument in \eqref{liminf}, it is able to show \eqref{h limit} without the uniform integrability requirement of the sequence $\{h(X_n|Y_n),n\geq1\}$.

More importantly, we can establish the continuity of Fisher information under the assumption of \eqref{IGD-n}, and then deduce the asymptotic
Gaussianity by means of  Fisher information as stated in Theorem \ref{Fisher limit}. For $a>0$ and function $l(R)$, define
\begin{equation}
\mathcal{I}_{a,l}:=\{X_0 + G_a:\  L_{X_0+G_a}(R)\leq l(R),\forall R>0, \E X_0=0, X_0\text{ is independent with }G_a     \}
\end{equation}
and $\overline{\mathcal{I}_{a,l}}:=(\mathcal{I}_{a,l})^{cl}$, where the closure is taken in $L^1_{1+x^2}$. The next lemma shows that Fisher information is uniformly continuous on $\overline{\mathcal{I}_{a,l}}$ with respect to $L^1$ norm.

\begin{lemma}\label{continuity of Fisher information}
	The set $\overline{\mathcal{I}_{a,l}}$ is compact in $L^1$. In addition, if $p_n\in\overline{\mathcal{I}_{a,l}}$ and $p_n \xrightarrow{L^1} p$, then $J(p_n)\rightarrow J(p)$ as $n\rightarrow\infty$.
\end{lemma}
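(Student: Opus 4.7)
The plan is to reduce both assertions to properties of the Gaussian convolution $p = \mu * g_a$. Each $p \in \mathcal{I}_{a,l}$ has the form $p = f_0 * g_a$ with $f_0$ the density of $X_0$, so I inherit the uniform bounds $\|p\|_\infty \leq \|g_a\|_\infty$ and $\|p'\|_\infty \leq \|g_a'\|_\infty$, while the hypothesis $L_p(R) \leq l(R)$ gives the tail control $\int_{|x|>R}(1+x^2)p(x)\,dx \leq 2\, l(R)$ uniformly for $R \geq 1$. These are precisely the uniform regularity properties exploited in Proposition \ref{bound on H and B} and Lemma \ref{compactness of HtL}, so the same Arzel\`a--Ascoli plus diagonal extraction argument shows that $\mathcal{I}_{a,l}$ is precompact in $L^1_{1+x^2}$. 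Consequently $\overline{\mathcal{I}_{a,l}}$ is compact in $L^1_{1+x^2}$, and a fortiori compact in the coarser $L^1$ topology.

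For the continuity statement, I would first note that every $p \in \overline{\mathcal{I}_{a,l}}$ is again of the form $p = \mu * g_a$ for some probability measure $\mu$ with zero mean and finite variance: if $f_{0,n} * g_a \to p$ in $L^1_{1+x^2}$, then $\{f_{0,n}\}$ has uniformly bounded second moment, hence is tight, so a subsequence converges weakly to some $\mu$, and $p = \mu * g_a$ by uniqueness of weak limits of convolutions. Now suppose $p_n = \mu_n * g_a \to p = \mu * g_a$ in $L^1$ with all $p_n$ in the compact set $\overline{\mathcal{I}_{a,l}}$. Since any $L^1_{1+x^2}$-cluster point of $\{p_n\}$ must equal $p$ (as $L^1_{1+x^2}$-convergence implies $L^1$-convergence), compactness upgrades the convergence to $L^1_{1+x^2}$; in particular $\V(\mu_n) \to \V(\mu)$. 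Taking Fourier transforms and dividing by the nowhere-vanishing $\hat{g_a}(\xi) = e^{-a\xi^2/2}$, L\'evy's continuity theorem yields $\mu_n \Rightarrow \mu$ weakly.

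Because $g_a$ and $g_a'$ are bounded continuous functions vanishing at infinity, weak convergence produces pointwise $p_n(x) \to p(x)$ and $p_n'(x) = \int g_a'(x-y)\,d\mu_n(y) \to p'(x)$ for every $x$; and since $p(x) > 0$ everywhere, it follows that $(p_n'(x))^2/p_n(x) \to (p'(x))^2/p(x)$ pointwise. To pass to the limit inside the integral, I would apply Cauchy--Schwarz to get
\begin{equation*}
\frac{(p_n'(x))^2}{p_n(x)} \;=\; \frac{\left(\int g_a'(x-y)\,d\mu_n(y)\right)^2}{\int g_a(x-y)\,d\mu_n(y)} \;\leq\; \int \frac{(g_a'(x-y))^2}{g_a(x-y)}\,d\mu_n(y) \;=\; \frac{1}{a^2}(\mu_n * h)(x),
\end{equation*}
where $h(z) := z^2 g_a(z)$ using $g_a'(z) = -(z/a)g_a(z)$. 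The majorants $g_n := (\mu_n * h)/a^2$ converge pointwise to $(\mu * h)/a^2$ (weak convergence against the $C_0$ function $h$) and have constant integral $\int g_n\,dx = \|h\|_{L^1}/a^2 = 1/a$ for every $n$. Pratt's generalized dominated convergence lemma therefore gives $J(p_n) = \int (p_n')^2/p_n\,dx \to \int (p')^2/p\,dx = J(p)$.

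The main obstacle is that no single $n$-independent integrable dominator is available for the Fisher integrand: crude uniform pointwise bounds on $(p_n')^2/p_n$ produce constants, which fail to be integrable over $\mathbb{R}$. The Cauchy--Schwarz device above circumvents this by supplying $n$-dependent majorants whose $L^1$-norms are constant and whose pointwise limits are identified, so that Pratt's lemma delivers the convergence of Fisher information in place of the standard dominated convergence theorem.
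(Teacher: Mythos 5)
Your proof is correct, and the second half takes a genuinely different route from the paper. For compactness both arguments are essentially the same reduction to Arzel\`a--Ascoli with uniform tail control (the paper phrases it by peeling off half the Gaussian, writing $X_0+G_a = p_t^*Z$ with $t=-\tfrac12\log(1-a/2)$, and invoking the compactness of $\mathcal{H}_{t,l'}$ from Lemma \ref{compactness of HtL}). For the continuity of $J$, the paper stays inside its Ornstein--Uhlenbeck framework: it extracts sub-subsequences of the inner densities $f_n$ converging in $L^1_{1+x^2}$, deduces a.e.\ convergence of $p_n'$ and then of $(\sqrt{p_n})'$, dominates $\bigl((\sqrt{p_n})'\bigr)^2$ by $B_t\,(f_{n_k}*\phi_{2t})$ via Proposition \ref{bound on H and B}(2), upgrades to $L^2$-convergence of $(\sqrt{p_n})'$ by a generalized dominated convergence step, and finishes with $J=4\|(\sqrt{p})'\|_{L^2}^2$ plus a sub-subsequence argument. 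You instead exploit the mixture representation $p_n=\mu_n*g_a$ directly: Fourier division by $e^{-a\xi^2/2}$ and L\'evy's theorem give $\mu_n\Rightarrow\mu$, hence \emph{everywhere} pointwise convergence of $p_n$ and $p_n'$ (no subsequences needed), and the Cauchy--Schwarz ``Fisher information of a mixture'' bound $(p_n')^2/p_n\le a^{-2}(\mu_n*h)$ with $h(z)=z^2g_a(z)$ supplies moving majorants of constant integral $1/a$, so Pratt's lemma closes the argument. Both proofs ultimately rest on a generalized DCT with $n$-dependent dominators, but yours avoids the square-root-density detour and the OU semigroup machinery, and yields full-sequence convergence in one pass; the paper's version has the advantage of reusing the regularity estimates ($\mathcal{H}_{t,l}$, Proposition \ref{bound on H and B}) already built for the Key Lemma. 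Two small points worth making explicit in your write-up: the uniform second-moment bound on the $\mu_n$ (needed for tightness and for identifying limits in $\overline{\mathcal{I}_{a,l}}$ as Gaussian mixtures) follows from $\V(X_0)\le \V(X_0+G_a)\le l(0)$, and the application of Pratt's lemma requires $\int(\mu*h)/a^2\,dx = 1/a$, which holds by Fubini since $\mu$ is a probability measure.
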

\pf Without loss of generality, we assume $a\in(0,1)$. For any $X\in\mathcal{I}_{a,l}$, we have
\begin{equation}\label{I decompose}
\begin{aligned}
X = X_0 + G_a &\overset{d}{=} X_0 + \hat{G}_{a/2} + G_{a/2} \overset{d}{=} X_0 + \hat{G}_{a/2} + \sqrt{a/2}G\\
& = e^{-t} Z + \sqrt{1-e^{-2t}}G = p_t^* Z,
\end{aligned}
\end{equation}
where $t = -\frac{\log(1-a/2)}{2}$ and $Z = e^{t}(X_0 + \hat{G}_{a/2})$. Clearly $\E Z = 0$. Since
\begin{equation}
\Gamma:=\{|X_0|\geq 2R\}\cap\{|G_a|\leq R\}\subset \{|X|\geq R\},
\end{equation}
we have
\begin{equation}
\begin{aligned}
l(R) \geq L_X(R) &\geq \E[X^2\bm{1}_{\Gamma}]\geq  \E[(X_0^2+2X_0G_a)\bm{1}_{\Gamma}]\\
& \geq L_{X_0}(2R)\P(|G_a|\leq R) +2\E[X_0\bm{1}_{\{|X_0|\geq 2R\}}]\E[G_a\bm{1}_{\{|G_a|\leq R\}}]\\
& \geq L_{X_0}(2R)(1 - \frac{a^2}{R^2}),
\end{aligned}
\end{equation}
where the last inequality holds by Chebyshev's inequality and the symmetry of $G$. Therefore, the tail variance of $X_0$ is also uniformly bounded. Using the similar argument in the proof of Proposition \ref{L of HtL}, it is able to find a function $l'$ decreasing to 0 independent with $X$ such that $L_Z \leq l'$. Consequently, we have $\mathcal{I}_{a,l}\subset\mathcal{H}_{t,l'}$. Notice that $\bar{\mathcal{I}}_{a,l}$ is closed, and Lemma \ref{compactness of HtL} shows the compactness of $\mathcal{H}_{t,l'}$, this implies that $\overline{\mathcal{I}_{a,l}}$ is compact in $L^1_{1+x^2}$. Finally, since $\|\cdot\|_{L^1} \leq \|\cdot\|_{L^1_{1+x^2}}$, $\overline{\mathcal{I}_{a,l}}$ is also compact in $L^1$. This completes the proof of the first part.

Without loss of generality, suppose $X_n\in\mathcal{I}_{a,l}$ with density $p_n$ such that $\|p_n-p\|_{L^1}\rightarrow 0$ as $n\rightarrow\infty$. The proof of the second part is divided into 4 steps.

Firstly we show that we can find a subsequence $p_{n_k}$ such that $p'_{n_k}\xrightarrow{a.e.}p'$. Because $\mathcal{I}_{a,l}\subset\mathcal{H}_{t,l'}$ and $\mathcal{H}_{t,l'}$ is compact, it is able to find $q\in \mathcal{H}_{t,l'}$ and subsequence $p_{n_k}$ such that
\begin{equation}\label{pnk limit}
p_{n_k}\xrightarrow{L^1_{1+x^2}}q.
\end{equation}
Since $p_n\xrightarrow{L^1}p$, we must have $p = q, a.e.$ Write $X_n \overset{d}{=} p^*_t Z_n$ as in \eqref{I decompose}, denote the density of $e^{-t}Z_n$ by $f_n$, and the density of $\sqrt{1-e^{-2t}}G$ by $\phi_t$.Then \eqref{pnk limit} implies that
\begin{equation}\label{fnk * limit}
f_{n_k}*\phi_t \xrightarrow{L^1_{1+x^2}}p.
\end{equation}
Notice that $e^{-t}Z_{n_k} = X_{0,n_k} + \hat{G}_{a/2}$ also takes the form of independent Gaussian decomposition, from the first part of this proof we conclude that $e^{-t}Z_{n_k}\in \mathcal{H}_{\tilde{t},\tilde{l}}$ for some $\tilde{t}>0$ and function $\tilde{l}$ decreasing to 0. Using the compactness of $\mathcal{H}_{\tilde{t},\tilde{l}}$ again we can find a further subsequence such that
\begin{equation}\label{fnkj limit}
f_{n_{k_j}} \xrightarrow{L^1_{1+x^2}} f
\end{equation}
for some $f \in \mathcal{H}_{\tilde{t},\tilde{l}}$. Since $\forall y\in \R$,
\begin{equation}
\left|(f_{n_{k_j}} - f)*\phi_t(y)\right| = \int_\R \left|f_{n_{k_j}}(x) - f(x)\right|\phi_t(y-x)dx \leq C_t \|f_{n_{k_j}}-f\|_{L^1} \rightarrow 0,\ n\rightarrow\infty,
\end{equation}
we have $f_{n_{k_j}}*\phi_t \xrightarrow {a.e.} f*\phi_t$. From \eqref{fnk * limit} we obtain that $p = f\ast \phi_t,\ a.e.$ Since
\begin{equation}
\begin{aligned}
|p'_{n_{k_j}}(y)-p'(y)|&=\left|\int_\R \frac{y-x}{1-e^{-2t}}(f_{n_{k_j}}(x)-f(x))\phi_t(y-x)dx\right|\\
&\leq  C'_t|y|\int_\R |f_{n_{k_j}}(x)-f(x)|dx+ C'_t\int_R |x| |f_{n_{k_j}}(x)-f(x)|dx\\
&\leq C'_t|y|\|f_{n_{k_j}}-f\|_{L^1} + C'_t\|f_{n_{k_j}}-f\|_{L^1_{1+x^2}},\ \forall y\in\R,
\end{aligned}
\end{equation}
using \eqref{fnkj limit} we conclude that $p'_{n_{k_j}}\xrightarrow{a.e.}p'$.

Secondly, we show that it is able to take a subsequence such that $(\sqrt{p_{n_k}})'\xrightarrow{a.e.}(\sqrt{p})'$. In fact, since $p_n\xrightarrow{L^1}p$, we can take subsequence such that $p_{n_k}\xrightarrow{a.e.} p$. From step 1, it is able to take a further subsequence such that $p'_{n_{k_j}}\xrightarrow{a.e.} p'$. Since $(\sqrt{p})' = p'/(2\sqrt{p})$, we have $(\sqrt{p_{n_{k_j}}})'\xrightarrow{a.e.}(\sqrt{p})'$.

Thirdly, let $p_{n_k}$ be the subsequence taken in the way above, we show $(\sqrt{p_{n_k}})'$ also converges in the sense of $L^2$ norm. Since $X_n \overset{d}{=} p_t^* Z_n\in\mathcal{H}_{t,l'}$, from Proposition \ref{bound on H and B} we obtain
\begin{equation}
\left((\sqrt{p_{n_k}})'\right)^2 = \frac{(p'_{n_k})^2}{4p_{n_k}}\leq B_t p^*_{2t}(e^t Z_{n_k}),
\end{equation}
where $B_t > 0$ is a constant. By some simple calculations we have $p^*_{2t}(e^t Z_{n_k}) = f_{n_k} * \phi_{2t}$. Using \eqref{fnkj limit} we obtain a further subsequence $f_{n_{k_j}} * \phi_{2t} \xrightarrow{L^1} f*\phi_{2t}$. Now from dominated convergence theorem we conclude that
\begin{equation}
(\sqrt{p_{n_{k_j}}})' \xrightarrow{L^2} (\sqrt{p})'.
\end{equation}

Finally, since $J(p_{n_{k_j}}) = 4\|(\sqrt{p_{n_{k_j}}})'\|_{L^2}^2$, we have $J(p_{n_{k_j}}) \rightarrow J(p)$.

The above 4-steps discussion implies that for any subsequence $J(p_{n_k})$, we can find a further subsequence such that $J(p_{n_{k_j}})\rightarrow J(p)$, this indicates that $J(p_n)\rightarrow J(p)$ as $n\rightarrow\infty$, which completes the whole proof.\e

Using Lemma \ref{continuity of Fisher information}, we show that the conditional Fisher information $\E J(X_n|Y_n)$ also converges to a limit that is coincides with Gaussian, \textit{i.e.} Theorem \ref{Fisher limit}.

\textbf{Proof of Theorem \ref{Fisher limit}.}   Denote $J_n=\E J(X_n|Y_n), V_n=\E\V(X_n|Y_n)$.  On the one hand, by the CR bound \eqref{CR bound} and Cauchy-Schwartz inequality,
\begin{equation}\label{ge}
V_nJ_n\geq \left(\E\sqrt{J(X_n|Y_n)\V(X_n|Y_n)}\right)^2\geq 1,\ \ \forall n,
\end{equation}
which implies $\liminf\limits_{n\rightarrow\infty} J_n \geq \frac{1}{\sigma^2}$.

On the other hand, for any $\alpha>0$, by Proposition \ref{F and L condition}, we can find a function $l$ decreasing to 0 such that the set
\begin{equation}
U_{n}=\{ y_n:\ L_{X_n|y_n}\leq l  \}
\end{equation}
has probability $\P(U_n)\geq 1- \alpha$. From Lemma \ref{continuity of Fisher information} we see that the Fisher information is uniformly continuous on $\mathcal{I}_{a,l}$ with respect to $L^1$ norm. Using Pinsker's inequality, we conclude that for any $\epsilon>0$, there exists $\delta = \delta(\epsilon,a,L)>0$ such that for any $p,q \in \mathcal{I}_{a,l}$,
\begin{equation}\label{KL-J continuity}
\KL(p||q) \leq \delta \Longrightarrow |J(p) - J(q)| \leq \epsilon.
\end{equation}
Now define
\begin{align}
&R_n = \{y_n:\ D(X_n|y_n) \leq \delta\},\\
&W_n = \{y_n:\ |\V(X_n|Y_n)-\sigma^2|\leq \epsilon\}.
\end{align}
Let $T_n = U_n \cap R_n \cap W_n$, since $D(X_n|y_n)$ is the divergence between $(X_n|y_n)$ and the Gaussian with the same variance, from \eqref{KL-J continuity} we obtain
\begin{equation}
\left|J(X_n|y_n) - \frac{1}{\V(X_n|y_n)}\right| \leq \epsilon,\ \forall y_n \in T_n.
\end{equation}
As a result,
\begin{equation}
\begin{aligned}
J_n & = \E\left[(J(X_n|Y_n)\bm{1}_{T_n}\right]+\E\left[(J(X_n|Y_n)\bm{1}_{T_n^C}\right]\\
&\leq \E\left[\left(\epsilon + \frac{1}{\V(X_n|Y_n)}\right)\bm{1}_{T_n}\right] + \frac{1}{a}\P(T_n^C),
\end{aligned}
\end{equation}
where we use \eqref{J decrease} in the last inequality. Notice that on $T_n$ we have
\begin{equation}
\sigma^2-\epsilon \leq \V(X_n|Y_n) \leq \sigma^2 + \epsilon,
\end{equation}
which implies
\begin{equation}
\begin{aligned}
J_n &\leq \epsilon + \frac{1}{\sigma^2-\epsilon} + \frac{\P(U_n^C)+\P(R_n^C)+\P(W_n^C)}{a}\\
&\leq \epsilon + \frac{1}{\sigma^2-\epsilon} + \frac{\alpha+\P(R_n^C)+\P(W_n^C)}{a}.
\end{aligned}
\end{equation}
By \eqref{general D} and \eqref{V limit} in Theorem \ref{main theorem}, $\P(R_n^C),\P(W_n^C)\rightarrow 0$ as $n\rightarrow\infty$, thus
\begin{equation}
\limsup\limits_{n\rightarrow\infty} J_n \leq \epsilon + \frac{1}{\sigma^2-\epsilon} + \frac{\alpha}{a}.
\end{equation}
Since $\alpha$ and $\epsilon$ can be arbitrary small, we obtain $\limsup\limits_{n\rightarrow\infty} J_n \leq \frac{1}{\sigma^2}$. This completes the whole proof. \pe

\section{Hadamard Compression for Continuous Input}\label{section6}

In this section we investigate the Hadamard compression for continuous input. We recall first some notations which we have introduced in Section \ref{section1.3}.  Consider the n-iteration of Hadamard transform
$H_{\lambda, n}=B_N(H_\lambda)^{\otimes n}.$  We suppose that the input $\xi^n=(\xi^n_1,\xi^n_2,\cdots, \xi^n_N)^T$ satisfies $\{\xi^n_i\}_{i=1}^N\overset{\textit{i.i.d.}}{\sim} \xi$, where $\xi$ has density $p(x)$. Let  $\eta^n=(\eta^n_1,\eta^n_2,\cdots, \eta^n_N)^T=H_{\lambda,n}\xi^n$ be the received signal.
For $i<j,$ we write $\eta_{i:j}^n$ for $ (\eta^n_i, \eta_{i+1}^n,\cdots, \eta^n_j).$ We denote the density of $\eta_k^n$ as $f_k^n$, the joint density of $\eta_{i:j}^n=(\eta^n_i, \eta_{i+1}^n,\cdots, \eta^n_j)$ as $f_{i:j}^n$, and the conditional density of $\eta_{k}^n$ given  $\eta_{1:k-1}^n=(y_1, y_2, \cdots, y_{k-1})$ as $f^n_{k|y}$ with $y$ stands for $(y_1, y_2, \cdots, y_{k-1})$.  Let $\Omega:=\{0,1\}^\infty$ be the polarization path space. For each $n$-iteration polarization path $(\omega_1,\omega_2,\cdots\omega_n),$ we define	$k(n,\omega)=[\omega_1\omega_2\cdots\omega_n]_{2\to 10}+1,$ where $[\cdot]_{2\to10}$ is the conversion from  binary to decimal. Then, due to the action of  the bit permutation matrix  $B_N,$ $\eta^n_k(\omega):=\eta^n_{k(n,\omega)}$ is exactly the $k$th component $\eta^n_k$ of $ \eta^n.$

Recall that in section \ref{section3} we have studied the behaviour of a special path $\omega=(0,0,\cdots,0),$  in this section we shall study the behaviour of other polarization pathes. To this end, from now on we  impose a probability measure $\P_\Omega$ on the path space $\Omega:=\{0,1\}^\infty$ in such a way  that all the pathes appear with equal probability. More precisely,
let $\mathcal{F}$ be the $\sigma$-algebra generated by all cylindrical sets $S(b_1,b_2,\cdots,b_n):=\{ \omega\in\Omega: (\omega_1,\omega_2,\cdots,\omega_n)=(b_1,b_2,\cdots,b_n) \}, (b_1,b_2,\cdots,b_n)\in\{0,1\}^n,  n\ge 1.$ We impose $\P_\Omega$ on $(\Omega, \mathcal{F} )$ in such a way that  $\P_\Omega(S(b_1,b_2,\cdots,b_n))=1/2^n$  for each cylindrical set . The existence of such $\P_\Omega$ is ensured by the well known Kolmogorov extension theorem.
In the following we define three random sequences on the probability space $(\Omega, \mathcal{F}, \P_\Omega )$ which will play central roles in our forthcoming study.

\begin{definition}\label{three process}
	On the probability space $(\Omega, \mathcal{F}, \P_\Omega),$ we define {\it differential entropy sequence}  $\{h_n\}_{n=1}^\infty$, {\it variance sequence}  $\{V_n\}_{n=1}^\infty$, and {\it Fisher information sequence } $\{J_n\}_{n=1}^\infty$  as follows.
	\begin{align*}
	& h_n(\omega) = \int_{\R^{k-1}} f_{1:k(n,\omega)-1}(y)h\left(f_{k(n,\omega)|y}\right)dy = \E[ h\left(\eta^n_{k(n,\omega)}|\eta^n_{1:k(n,\omega)-1}\right)];\\
	& V_n(\omega) = \int_{\R^{k-1}} f_{1:k(n,\omega)-1}(y)\V\left(f_{k(n,\omega)|y}\right)dy = \E[\V\left(\eta^n_{k(n,\omega)}|\eta^n_{1:k(n,\omega)-1}\right)];\\
	& J_n(\omega) = \int_{\R^{k-1}} f_{1:k(n,\omega)-1}(y)J\left(f_{k(n,\omega)|y}\right)dy = \E[ J\left(\eta^n_{k(n,\omega)}|\eta^n_{1:k(n,\omega)-1}\right)].
	\end{align*}
	
\end{definition}

To investigate Hadamard compression of continuous random inputs, below we assume that the input signal  $\xi$ is the same as in Chapter 4 of \cite{ULE}, that is,
\begin{equation}\label{IGD1}
	\xi = \hat{\xi}_0 + G_a
\end{equation}
for some $a>0$, where $G_a$ is a Gaussian random variable with variance $a$ that is independent with $\hat{\xi}$. Note that the above form of random variables is a classical form in entropy analysis which aften appears in practice (see e.g. \cite{ULE},\cite{Bar1986}, \cite{EI-CLT}, \cite{Costa1985} and \cite{McKean1966}).

\subsection{Limit distributions of $a.s.$ polarization pathes}

The theorem below  shows that when input distribution has form \eqref{IGD1}, then under Hadamard transform and SC decoding frame, almost every distribution of the signal conditional on the knowledge of the previous signals   tends to Gaussian, when the  block length tends to infinity.
\begin{theorem}\label{polar theorem}
	If the input $\xi$ has form \eqref{IGD1} with $\E\hat{\xi}_0=0$ and $ \E\hat{\xi}^4_0<\infty$, then there exists $ O\subset\Omega, \P_\Omega(O)=1$, such that
	\begin{equation}\label{polarhn}
		h_\infty(\omega)=\frac{1}{2}\log(2\pi e V_\infty(\omega)),\;\;\forall \omega\in O;
	\end{equation} and hence
	\begin{equation}\label{polarhn1}
\lim\limits_{n\rightarrow\infty}\E D(\eta^n_{k(n,\omega)}|\eta^n_{1:k(n,\omega)-1}) = 0,\;\;\forall \omega\in O;
	\end{equation}
	\begin{equation}\label{polarVn}
		\lim_{n\to\infty} \E| \V(\eta^n_k(\omega)|\eta^n_{1:k(\omega)-1})-V_\infty(\omega) |=0,\;\;\forall \omega\in O;
	\end{equation}
	where $h_\infty(\omega)=\lim_{n\to\infty} h_n(\omega), V_\infty(\omega)=\lim_{n\to\infty} V_n(\omega).$
\end{theorem}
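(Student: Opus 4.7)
The plan is to derive Theorem~\ref{polar theorem} as a direct application of the Main Theorem (Theorem~\ref{main theorem}) to the sequence of random pairs $(X_n,Y_n):=(\eta^n_{k(n,\omega)},\eta^n_{1:k(n,\omega)-1})$ along $\P_\Omega$-almost every polarization path $\omega$. Call the full-measure set on which the Main Theorem will apply $O$; then \eqref{polarhn}, \eqref{polarhn1}, and \eqref{polarVn} will each be read off from one of the Main Theorem's outputs \eqref{V limit}--\eqref{general D}.

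First I would verify the structural hypotheses (1)--(3) of Theorem~\ref{main theorem} \emph{uniformly in} $\omega$. Because $(G_a^1,\dots,G_a^N)$ is i.i.d.\ $\mathcal{N}(0,a)$ and $H_{\lambda,n}$ is orthogonal, the transformed noise $H_{\lambda,n}(G_a^1,\dots,G_a^N)^T$ is again i.i.d.\ $\mathcal{N}(0,a)$ and is independent of the transformed signal $H_{\lambda,n}(\hat\xi_0^1,\dots,\hat\xi_0^N)^T$. Hence each $\eta^n_k$ decomposes as $\tilde X^n_k+\tilde G^n_k$ with $\tilde G^n_k\sim\mathcal{N}(0,a)$ independent of $Y_n$, which immediately gives $\V(X_n|Y_n)\ge a$ and, via the convolution-decreases-Fisher inequality, $J(X_n|Y_n)\le 1/a$. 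For the tail condition (3), $X_n=\sum_i c_i\xi_i$ with $\sum c_i^2=1$ and $|c_i|\le 1$, so $\E X_n^4\le \E\xi^4+3(\E\xi^2)^2$ uniformly in $n$; conditional Markov then yields $\E L_{X_n|Y_n}(R)\le CR^{-2}=:l(R)$.

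Next, the entropic identity \eqref{entropy condition 2} holds for every $\omega$ by \eqref{basic1}. The remaining two hypotheses---existence of $\lim_n\E h(X_n|Y_n)$ and uniform integrability of $\{h(X_n|Y_n)\}_n$---hold for a.s.\ $\omega$ via a martingale argument. Applying the entropy chain rule to one Hadamard layer together with the independence of the two copies of $\eta^n$ being combined yields the splitting identity
\begin{equation*}
h_{n+1}(\omega_1,\dots,\omega_n,0) + h_{n+1}(\omega_1,\dots,\omega_n,1) = 2\,h_n(\omega_1,\dots,\omega_n),
\end{equation*}
so $\E_{\P_\Omega}[h_{n+1}\mid\mathcal{F}_n]=h_n$ where $\mathcal{F}_n=\sigma(\omega_1,\dots,\omega_n)$. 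Thus $\{h_n,\mathcal{F}_n\}$ is a martingale, and the two-sided bound $\tfrac{1}{2}\log 2\pi e a\le h_n\le \tfrac{1}{2}\log 2\pi e\,\V(\xi)$ (from the Gaussian floor on each conditional density and a Jensen-plus-orthogonality ceiling) makes it $L^\infty$-bounded. Doob's theorem then supplies a full-measure set $O$ on which $h_n(\omega)$ converges. For $\omega\in O$, the random variables $h(X_n|Y_n)$ have the uniform lower bound $\tfrac{1}{2}\log 2\pi e a$, and the upper-tail control
\begin{equation*}
\E\,h(X_n|Y_n)\,\bm{1}_{\{h(X_n|Y_n)>M\}}\le c\,\bigl(\E\V(X_n|Y_n)^2\bigr)^{1/2}\P\bigl(\V(X_n|Y_n)>K(M)\bigr)^{1/4},
\end{equation*}
which is uniform in $n$ because $\E\V(X_n|Y_n)^2\le\E X_n^4\le C$, delivers the required uniform integrability.

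With all hypotheses in force for $\omega\in O$, Theorem~\ref{main theorem} applies. Conclusion \eqref{V limit} defines $V_\infty(\omega):=\lim_n\E\V(X_n|Y_n)>0$, and \eqref{h limit} is exactly \eqref{polarhn}. Combining \eqref{general variance} with $\E\V(X_n|Y_n)\to V_\infty(\omega)$ via the triangle inequality gives \eqref{polarVn}. Finally, \eqref{general D} gives $D(X_n|Y_n)\xrightarrow{\text{Pr.}}0$; since $D(X_n|Y_n)=\tfrac{1}{2}\log 2\pi e\,\V(X_n|Y_n)-h(X_n|Y_n)$ is uniformly integrable (by the same estimates applied to its variance part, plus the floor on $h(X_n|Y_n)$), convergence in probability upgrades to $\E D(X_n|Y_n)\to 0$, which is \eqref{polarhn1}. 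The main obstacle in this blueprint is the splitting identity for $h_n$: establishing it requires careful bookkeeping of how the bit permutation in $H_{\lambda,n+1}$ reconciles the two conditioning $\sigma$-fields $\eta^n_{1:k(n,\omega)-1}$ and $\eta^{n+1}_{1:k(n+1,\omega)-1}$ so that the two neighbouring paths recombine exactly into $2h_n$. Once this algebraic identity (and the accompanying uniform-integrability envelope) is in hand, the remainder is a mechanical invocation of the Main Theorem.
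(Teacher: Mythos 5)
Your proposal is correct and follows essentially the same route as the paper: verify hypotheses (1)--(3) of Theorem~\ref{main theorem} via the Gaussian decomposition $\xi=\hat\xi_0+G_a$ preserved under the orthogonal Hadamard transform, obtain \eqref{entropy condition 1} and uniform integrability from the bounded martingale $\{h_n,\mathcal{F}_n\}$ on path space (the splitting identity you flag is exactly what the paper's Appendix~\ref{A.1} computes), and then read off the three conclusions from the Main Theorem. The only cosmetic differences are that the paper gets uniform integrability from a deterministic two-sided bound on $h(X_n|Y_n)$ rather than your tail estimate, and derives \eqref{polarhn1} from \eqref{polarhn} by the Jensen argument of Corollary~\ref{sum corollary} rather than by upgrading convergence in probability of $D(X_n|Y_n)$.
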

\pf

In accordance with the notations we introduced in \eqref{Xn, Yn}, we write	$X_n=\eta^n_{k(n,\omega)}$ and $ Y_n =\eta^n_{1:k(n,\omega)-1}: =(\eta^n_{1},\eta^n_{2},\cdots,\eta^n_{k(n,\omega)-1})$. We have shown that the assumption \eqref{entropy condition 2} of $h(X_n|Y_n)$ in Theorem \ref{main theorem}, i.e. the entropic property  \eqref{basic1}, holds  for all $\omega \in \Omega$ (see Appendix \ref{A.1}). When the input signal  $\xi$ takes the form \eqref{IGD1},
we will find that for  $a.s. \omega \in \Omega,$
\begin{equation}\label{basic2}
	\mbox{sequence}~ \{ h(X_n(\omega)|Y_n(\omega))\}_{n=1}^\infty~ \mbox{is  uniformly integrable},
\end{equation}
and
\begin{equation}\label{basic3}
	\lim_{n\to\infty}   \E_{Y_n} h(X_n|Y_n)(\omega)~ \mbox{exists}.
\end{equation}
Thus the assumptions that  the uniformly integrability of $h_n$ and \eqref{entropy condition 1}  in Theorem \ref{main theorem} are held for $(X_n, Y_n)(\omega)=(\eta^n_{k(n,\omega)}, \eta^n_{1:k(n,\omega)-1})$, \textit{a.s.}
$\omega$. The detail proofs of the properties  \eqref{basic2} and \eqref{basic3} will be given in Appendix \ref{A.2}.  Naturally, we can choose $O\subset \Omega$ to be the set of all $\omega$'s that ensure \eqref{basic2} and \eqref{basic3}  hold. Clearly, $\P_\Omega(O)=1$.

 We now check Conditions 1-3 listed in Theorem \ref{main theorem}.  Since
$$\eta_k^n=[H_{\lambda,n} \xi^n  ]_k=[H_{\lambda,n} (\hat{\xi}_0^n+G_a^n)  ]_k=[H_{\lambda,n}\hat{\xi}_0^n+G_a^n  ]_k=[H_{\lambda,n}\hat{\xi}_0^n]_k+\hat{G}_a,$$
where   $\hat{G}_a$ is independent with all components in  $[H_{\lambda,n}\hat{\xi}_0^n]$. Therefore $\eta_k^n$ can be expressed in the form of  \eqref{IGD-n}. Accordingly, conditions 1 and 2 of Theorem \ref{main theorem} are satisfied by the same argument as \eqref{130} and \eqref{J decrease}.  For condition 3 in Theorem \ref{main theorem}, let $R\geq0$, we have
\begin{align*}
	\E_{\eta^n_{1:k-1}}\left[ L_{f_{k|y}}(R) \right]&= \int_{\R^{k-1}} L_{f_{k|y}}(R) f_{1:k-1}(y)dy = \int_{\R^{k-1}} \int_{|x|\geq R}x^2f_{k|y}(x)dx f_{1:k-1}(y)dy \\
	& = \int_{|x|\geq R}x^2f_k(x)dx =  L_{\eta^n_k}(R).
\end{align*}
Let $\tilde{l}(R) = \min\{\frac{\sqrt{(\E \xi^4 + 3(\E \xi^2)^2)(\E \xi^2)}}{R},\E \xi^2\}$, then we can prove (see Appendix \ref{G} for the proof),
$$L_{\eta^n_k}(R) \leq \tilde{l}(R),\ \forall n\in\mathbb{N}^+,\ R\geq0,$$
verifying condition 3 .

Thus \eqref{polarhn} and \eqref{polarVn}   are obtained by Theorem \ref{main theorem} since $h_n(\omega)$ is convergent when $\omega\in O$. \eqref{polarhn1} is a direct consequence of  \eqref{polarhn} (cf. the proof of Corollary \ref{sum corollary} ).
\e
\subsection{ Limit of $V_n$ and $J_n$ }
To avoid unnecessary tedious discussions,  we give the definition below.
\begin{definition}
	If random variable $\xi$ has a density $p$ such that $p\in C_b^2(\R)$ and $\E \xi^4<\infty$, we say $\xi\in\mathcal{S},$ or equivalently, $p\in\mathcal{S}$.
\end{definition}
In what follow we shall confine our discussion  on  the random variables in $\mathcal{S}$.

\begin{lemma}\label{J down}
	Let $\xi, \eta$ be two independent random variables with densities,  then
	$$J(\xi*_\lambda \eta)\le \lambda^2 J(\xi)+(1-\lambda^2)J(\eta),$$
	$$J(\xi*_\lambda (-\eta))\le \lambda^2 J(\xi)+(1-\lambda^2)J(\eta),$$
	$$J(\xi*_\lambda (-\eta)|\xi*_\lambda \eta)=\lambda^2 J(\xi)+(1-\lambda^2)J(\eta),$$
\end{lemma}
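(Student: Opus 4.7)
My plan is to handle the three statements in order, since the first two follow quickly from results already in the paper while the third calls for a direct computation.

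The first inequality $J(\xi *_\lambda \eta) \le \lambda^2 J(\xi) + (1-\lambda^2) J(\eta)$ is precisely Lemma \ref{Fisher ineq} applied with $n=2$, $\lambda_1=\lambda$, and $\lambda_2=\sqrt{1-\lambda^2}$; the required normalization $\lambda_1^2+\lambda_2^2=1$ is automatic. For the second inequality I will use the reflection-invariance of Fisher information: if $\eta$ has density $g$, then $-\eta$ has density $\tilde g(x)=g(-x)$, and the substitution $y=-x$ in $\int \tilde g'(x)^2/\tilde g(x)\,dx$ gives $J(-\eta)=J(\eta)$. The second inequality then follows by applying the first to $\xi$ and $-\eta$.

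For the equality in the third statement, my approach is a direct change of variables on the joint density. Setting $A=\xi *_\lambda \eta$ and $B=\xi *_\lambda(-\eta)$, the affine map $(\xi,\eta)\mapsto(A,B)$ is invertible with $\xi=(A+B)/(2\lambda)$, $\eta=(A-B)/(2\sqrt{1-\lambda^2})$, and constant Jacobian. Writing $f_{A,B}$ via the change-of-variables formula and differentiating $\log f_{B|A}(b|a)$ in $b$, the marginal $\log f_A(a)$ contributes nothing, so the conditional score reduces to $\partial_b \log f_{A,B}(a,b)$; by the chain rule this is an explicit linear combination of the one-variable scores $\rho_\xi(\xi):=(\log f_\xi)'(\xi)$ and $\rho_\eta(\eta):=(\log f_\eta)'(\eta)$. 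Squaring and integrating against $f_{A,B}(a,b)\,da\,db$, the cross term vanishes by independence of $\xi$ and $\eta$ together with the standard identity $\E\rho_\xi(\xi)=\E\rho_\eta(\eta)=0$, leaving a weighted sum of $J(\xi)$ and $J(\eta)$.

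The principal obstacle is the third statement: I will need to track the $\lambda$-dependent coefficients arising from the Jacobian and from differentiating $\log f_\xi((a+b)/(2\lambda))$ and $\log f_\eta((a-b)/(2\sqrt{1-\lambda^2}))$ so that they combine, after squaring and taking expectation, to the stated form $\lambda^2 J(\xi)+(1-\lambda^2)J(\eta)$. The first two statements, by contrast, collapse to one-line invocations of existing results.
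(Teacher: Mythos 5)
Your handling of the first two inequalities is correct and matches the paper, which likewise disposes of them by citing Lemma \ref{Fisher ineq} (together with the reflection invariance $J(-\eta)=J(\eta)$). For the equality, your method --- write the joint density of $(A,B)=(\xi*_\lambda\eta,\ \xi*_\lambda(-\eta))$ by a linear change of variables, observe that $\partial_b\log f_{B|A}(b|a)=\partial_b\log f_{A,B}(a,b)$ is a linear combination of the two one-variable scores, and kill the cross term using independence and $\E\rho_\xi(\xi)=\E\rho_\eta(\eta)=0$ --- is in substance exactly the paper's computation, which phrases the same thing as $J(B|A)=4\iint(\partial_x f)^2$ with $f$ the square root of the product density.

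The difficulty is precisely at the step you flagged as the principal obstacle: the coefficients do not combine to the stated form. With your (correct) inverse map $\xi=(A+B)/(2\lambda)$, $\eta=(A-B)/(2\sqrt{1-\lambda^2})$, the conditional score is $\tfrac{1}{2\lambda}\rho_\xi(\xi)-\tfrac{1}{2\sqrt{1-\lambda^2}}\rho_\eta(\eta)$, so squaring and taking expectation yields $\tfrac{1}{4\lambda^2}J(\xi)+\tfrac{1}{4(1-\lambda^2)}J(\eta)$, which equals $\lambda^2J(\xi)+(1-\lambda^2)J(\eta)$ only at $\lambda=1/\sqrt2$. A Gaussian sanity check confirms the identity as displayed is false in general: for $\xi,\eta$ i.i.d.\ $\mathcal{N}(0,1)$ one has $\V(B|A)=1-(2\lambda^2-1)^2=4\lambda^2(1-\lambda^2)$, hence $J(B|A)=1/(4\lambda^2(1-\lambda^2))\neq 1$ unless $\lambda=1/\sqrt2$. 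The intended lower branch is the orthogonal one produced by $H_\lambda$ itself, namely $B=\xi*_{\sqrt{1-\lambda^2}}(-\eta)=\sqrt{1-\lambda^2}\,\xi-\lambda\eta$ (cf.\ Appendix \ref{A.1}); for that pair your computation gives $\xi=\lambda A+\sqrt{1-\lambda^2}B$, $\eta=\sqrt{1-\lambda^2}A-\lambda B$, conditional score $\sqrt{1-\lambda^2}\,\rho_\xi(\xi)-\lambda\rho_\eta(\eta)$, and $J(B|A)=(1-\lambda^2)J(\xi)+\lambda^2J(\eta)$ --- the stated right-hand side with the two coefficients interchanged, which is immaterial for the application in Lemma \ref{vnjn} where $\xi$ and $\eta$ are i.i.d. So either restate the lemma for the orthogonal pair (adjusting the coefficients accordingly) or restrict to $\lambda=1/\sqrt2$; as written, no amount of coefficient-tracking in your final step will produce the displayed formula.
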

\pf
The first two inequalities can be obtained directly from Lemma \ref{Fisher ineq},  we  prove only the third one.
Take
\begin{equation}f(x,y)= p_{\xi}^{1/2}(\dfrac{x+y}{\lambda})p_{\eta}^{1/2}(\dfrac{y-x}{\sqrt{1-\lambda^2}}),
\end{equation}
then
\begin{equation*}
\begin{aligned}
J(\xi*_\lambda (-\eta)|\xi*_\lambda \eta)&= 4\int \int \left( \dfrac{\partial}{\partial x}f(x,y)  \right)^2 dxdy
=\lambda^2 J(\xi)+(1-\lambda^2)J(\eta).
\end{aligned}
\end{equation*}\e

For the conditional variance, we have the following lemma.
\begin{lemma}\label{V down}
	Let $\xi, \eta, Z$ be three 
	random variables with densities, then
	$$\V(\xi|\eta)\ge \V(\xi|\eta,Z),$$
\end{lemma}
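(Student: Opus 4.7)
The plan is to interpret the inequality in the averaged sense (consistent with the paper's convention, as in Lemma~\ref{J down} where the right-hand sides are numbers while the left-hand sides are written as random-variable-valued conditional functionals) and to reduce the statement to the classical law of total variance applied conditionally on $\eta$.

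First, I would fix a realization $\eta=y$ and apply the ordinary decomposition of variance to the conditional law of $\xi$ given $\eta=y$, with inner conditioning on $Z$:
\begin{equation*}
\V(\xi \mid \eta=y) \;=\; \E\bigl[\V(\xi \mid \eta=y, Z)\,\big|\,\eta=y\bigr] \;+\; \V\bigl[\E(\xi \mid \eta=y, Z)\,\big|\,\eta=y\bigr].
\end{equation*}
Both terms on the right are well defined because $\xi,\eta,Z$ admit a joint density, so the regular conditional distributions $(\xi \mid \eta=y)$ and $(\xi \mid \eta=y, Z=z)$ exist and all second moments involved are finite under the standing assumptions. Since the second term on the right is non-negative, this yields the pointwise (in $y$) bound
\begin{equation*}
\V(\xi \mid \eta=y) \;\geq\; \E\bigl[\V(\xi \mid \eta=y, Z)\,\big|\,\eta=y\bigr].
\end{equation*}

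Second, I would integrate against the marginal law of $\eta$ and use the tower property: the right-hand side, once integrated, becomes $\E\,\V(\xi \mid \eta, Z)$, while the left-hand side becomes $\E\,\V(\xi \mid \eta)$. This gives
\begin{equation*}
\E\,\V(\xi \mid \eta) \;\geq\; \E\,\V(\xi \mid \eta, Z),
\end{equation*}
which is the claim under the averaging convention adopted throughout Section~\ref{section3}.

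There is essentially no obstacle: the entire content is the total variance identity $\V(X)=\E\V(X\mid Y)+\V\E(X\mid Y)$, invoked for the conditional distribution of $(\xi,Z)$ given $\eta$. The only points requiring a line of care are (i) noting that the inequality is not pointwise in $\omega$ between the two random variables (a specific realization of $Z$ can perfectly well raise the conditional variance), so the claim must be read in the averaged sense, and (ii) checking integrability so the tower property applies; both are immediate from the density assumption and the finiteness of variances. The lemma will then play a role entirely analogous to Lemma~\ref{J down}, recording that additional conditioning can only decrease the averaged conditional variance, which is what is needed when tracking $V_n(\omega)$ along polarization paths in the Hadamard iteration.
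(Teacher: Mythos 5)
Your proof is correct and is essentially the paper's argument in different packaging: the paper cancels the common term $\E[\E(\xi^2|\cdot)]=\E\xi^2$ and proves $\E[\E^2(\xi|\eta)]\le\E[\E^2(\xi|\eta,Z)]$ by conditional Jensen, which is exactly the non-negativity of the term $\V[\E(\xi|\eta,Z)\,|\,\eta]$ in your total-variance decomposition. Your reading of the inequality in the averaged sense also matches how the lemma is actually invoked (for the monotonicity of $V_n(\omega)$ in Lemma~\ref{vnjn}), so no gap.
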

\pf It is sufficient to prove
\begin{equation*}\E_\eta[  \E^2(\xi|\eta)]\le \E_{(\eta,Z)}[  \E^2(\xi|\eta,Z)].\end{equation*}
In fact,
\begin{equation*}
\begin{aligned}
\E_{(\eta,Z)}[  \E^2(\xi|\eta,Z)]&= \int_\eta \int_Z \E^2(\xi|\eta=y,Z=z) p_{(\eta,Z)}(y,z) dzdy\\
&\ge \int_\eta   [   \int_Z \E(\xi|\eta=y, Z=z)   p_{Z|\eta}(z|y) dz  ]^2 p_\eta(y)dy\\
&= \int_\eta [ \E(\xi|\eta=y) ]^2p_\eta(y)dy\\
&=\E_\eta[  \E^2(\xi|\eta)].
\end{aligned}
\end{equation*}
where $\ge$ follows from Jensen's inequality, the second $=$ follows from Fubini theorem.\e

Thus the variance sequence $V_n$ and Fisher information sequence $J_n$ satisfy the following Lemmas.

\begin{lemma}\label{vnjn}
	Suppose that the input $\xi$ has density $p$, $p\in \mathcal{S}$ and $J(p)<\infty$, then $\forall \omega \in \Omega$, $J_n(\omega), V_n(\omega)$ are decreasing, and $V_nJ_n\ge1.$
\end{lemma}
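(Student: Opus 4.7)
The plan is to exploit the recursive structure of the iterated Hadamard transform. Writing $\lambda':=\sqrt{1-\lambda^2}$, the derivation of \eqref{basic1} (cf.\ Appendix A.1) shows that passing from level $n$ to level $n+1$ we take two i.i.d.\ copies $(X_n,Y_n)$ and $(X_n',Y_n')$, apply the orthogonal block $H_\lambda$ to form the pair
\begin{equation*}
U := \lambda X_n + \lambda' X_n', \qquad L := \lambda' X_n - \lambda X_n',
\end{equation*}
and then, according as $\omega_{n+1}=0$ or $1$, declare either $X_{n+1}=U$ with $Y_{n+1}=(Y_n,Y_n')$, or $X_{n+1}=L$ with $Y_{n+1}=(Y_n,Y_n',U)$. (The latter identification is forced by the chain rule $h(U,L|Y_n,Y_n')=h(X_n,X_n'|Y_n,Y_n')=2h_n$ combined with \eqref{basic1}.) It therefore suffices to verify $V_{n+1}\le V_n$ and $J_{n+1}\le J_n$ in each of these two cases.

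For the variance, conditionally on $(Y_n,Y_n')$ the variables $X_n$ and $X_n'$ are independent, giving
\begin{equation*}
\V(U|Y_n,Y_n')=\lambda^2\V(X_n|Y_n)+\lambda'^2\V(X_n'|Y_n'),\qquad \V(L|Y_n,Y_n')=\lambda'^2\V(X_n|Y_n)+\lambda^2\V(X_n'|Y_n'),
\end{equation*}
both of which have expectation $V_n$. In the upper case this already yields $V_{n+1}=V_n$. In the lower case I apply Lemma \ref{V down} with $Z=U$ to get $V_{n+1}=\E\V(L|Y_n,Y_n',U)\le \E\V(L|Y_n,Y_n')=V_n$.

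For the Fisher information, in the upper case I apply Lemma \ref{Fisher ineq} pointwise in $(Y_n,Y_n')=(y,y')$ to the independent pair $X_n|y, X_n'|y'$ and integrate, giving $J_{n+1}\le \lambda^2 J_n+\lambda'^2 J_n=J_n$. In the lower case I invoke the third identity of Lemma \ref{J down}: matching $(L,U)$ with $(\xi*_\mu(-\eta),\xi*_\mu\eta)$ via $\xi=X_n'$, $\eta=X_n$, $\mu=\lambda'$ (so that $-L=\xi*_\mu(-\eta)$) and using the sign-invariance $J(\cdot)=J(-\cdot)$, that identity applied conditionally on $(Y_n,Y_n')$ yields the exact equality $J(L|U,Y_n,Y_n')=\lambda'^2 J(X_n'|Y_n')+\lambda^2 J(X_n|Y_n)$, whence $J_{n+1}=J_n$ after integrating. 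The bound $V_n J_n\ge 1$ is then immediate from the pointwise Cram\'er--Rao inequality \eqref{CR bound} and Cauchy--Schwarz, namely $V_n J_n\ge (\E\sqrt{\V(X_n|Y_n)J(X_n|Y_n)})^2\ge 1$, exactly the computation appearing in \eqref{ge}.

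The main obstacle I anticipate is the lower-path Fisher step: Lemma \ref{J down} is stated unconditionally and for one specific normalisation, whereas what I need is its conditional version tailored to the Hadamard pair $(L,U)$. Verifying the parameter identification together with the sign-invariance of $J$, and then promoting the unconditional identity to its conditional form by applying it fibrewise over $(Y_n,Y_n')$---which is legitimate because conditioning preserves the independence of $X_n$ and $X_n'$---is the delicate bookkeeping, though not a genuinely new argument.
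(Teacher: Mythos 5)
Your proposal is correct and follows essentially the same route as the paper: case analysis on $\omega_{n+1}$, with the upper path handled by the Fisher convolution inequality and variance additivity, the lower path by the exact conditional Fisher identity of Lemma \ref{J down} together with Lemma \ref{V down}, and the product bound $V_nJ_n\ge 1$ by the Cram\'er--Rao bound plus Cauchy--Schwarz. Your write-up is in fact more explicit than the paper's about the identification of $(X_{n+1},Y_{n+1})$ with $(U,L)$ and about applying the lemmas fibrewise over $(Y_n,Y_n')$, which the paper leaves implicit.
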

\pf  First, if $\omega_n=0$, it is obvious that $V_n(\omega)=V_{n-1}(\omega)$.
According to  Lemma \ref{J down} we have
$$J_n(\omega)\le J_{n-1}(\omega).$$
If $\omega_n=1$, then by Lemma \ref{J down} and Lemma \ref{V down},
$$V_n(\omega)\le V_{n-1}(\omega),\;\; J_n(\omega)=J_{n-1}(\omega).$$
Thus we obtain the monotonically decreasing property of $J_n, V_n$.

Secondly, according to  \eqref{CR bound} and Cauchy-Schwartz inequality,
\begin{equation*}
\begin{aligned}
V_nJ_n&=\left(\int_{\R^{k-1}} f_{1:k(n,\omega)-1}(y)h\left(f_{k(n,\omega)|y}\right)dy \right)\left( \int_{\R^{k-1}} f_{1:k(n,\omega)-1}(y)\V\left(f_{k(n,\omega)|y}\right)dy\right)\\
&\ge \left( \int_{\R^{k-1}} f_{1:k(n,\omega)-1}(y)\left[h\left(f_{k(n,\omega)|y}\right)\V\left(f_{k(n,\omega)|y}\right)\right]^{1/2}dy\right)^{2}\\
&\ge \left( \int_{\R^{k-1}} f_{1:k(n,\omega)-1}(y) dy\right)^{2}=1.
\end{aligned}
\end{equation*}
\e
%

\begin{lemma}\label{vj>1}
	Suppose that the input $\xi$ has density $p$, $p\in \mathcal{S}$ and $J(p)<\infty$, then $\forall \omega \in \Omega$, $J(p)\ge J_n\ge\dfrac{1}{\V(p)}, \V(p)\ge V_n\ge\dfrac{1}{J(p)}.$
\end{lemma}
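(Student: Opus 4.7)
The plan is to derive all four bounds as immediate consequences of the previous lemma (Lemma \ref{vnjn}), using only monotonicity of the two sequences together with the product inequality $V_n J_n \ge 1$, and then anchoring the induction at $n=0$.

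First I would identify the base case. When $n=0$, we have $N=2^0=1$, no Hadamard transform has been applied, and there is no conditioning, so $\eta^0_1 \overset{d}{=} \xi$ and therefore
\begin{equation*}
V_0(\omega) = \V(\xi) = \V(p), \qquad J_0(\omega) = J(\xi) = J(p), \qquad \forall \omega \in \Omega.
\end{equation*}
This gives the reference values to which all later terms will be compared.

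Next, for the upper bounds, I would invoke the monotonicity statement in Lemma \ref{vnjn}: the sequences $V_n(\omega)$ and $J_n(\omega)$ are decreasing in $n$ for every fixed $\omega\in\Omega$. Hence
\begin{equation*}
V_n(\omega) \le V_0(\omega) = \V(p), \qquad J_n(\omega) \le J_0(\omega) = J(p).
\end{equation*}

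Finally, for the lower bounds I would combine monotonicity with the pointwise Cramér–Rao-type estimate $V_n J_n \ge 1$ that was also established in Lemma \ref{vnjn}. Specifically,
\begin{equation*}
J_n(\omega) \ge \frac{1}{V_n(\omega)} \ge \frac{1}{V_0(\omega)} = \frac{1}{\V(p)},
\end{equation*}
and symmetrically
\begin{equation*}
V_n(\omega) \ge \frac{1}{J_n(\omega)} \ge \frac{1}{J_0(\omega)} = \frac{1}{J(p)},
\end{equation*}
which yields the two remaining inequalities.

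I do not anticipate any real obstacle here, since the work has already been done in Lemma \ref{vnjn}; the only point worth double-checking is that the hypotheses $p\in\mathcal{S}$ and $J(p)<\infty$ make the initial values $V_0=\V(p)$ and $J_0=J(p)$ finite and nonzero, so that the reciprocals $1/\V(p)$ and $1/J(p)$ are well-defined finite quantities used as the lower bounds.
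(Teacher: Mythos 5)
Your proposal is correct and matches the paper's intent exactly: the paper's proof is the one-line remark that the lemma "can be directly deduced by Lemma \ref{vnjn}," and your argument simply spells out that deduction (anchoring at $V_0=\V(p)$, $J_0=J(p)$, then using monotonicity and $V_nJ_n\ge 1$). No gaps.
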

\pf It can be directly deduced by Lemma \ref{vnjn}.
\e

Finally, we come to the main conclusion concerning the limit of $V_n$ and $J_n$ as the theorem below.
\begin{theorem}\label{polar theorem1}
	If the input $\xi$ has form \eqref{IGD1} with $\E\hat{\xi}_0=0$ and $ \E\hat{\xi}^4_0<\infty$, then for all $\omega \in  \Omega$, we have
	
	\begin{equation}\label{polarJn}
		J_\infty(\omega)V_\infty(\omega)=1,
	\end{equation}
	and
	\begin{equation}\label{polarVbound}
		1/J(\xi)\le V_\infty(\omega)\le \V(\xi),
	\end{equation}	
	where $ V_\infty(\omega)=\lim_{n\to\infty} V_n(\omega),~~ J_\infty(\omega)=\lim_{n\to\infty} J_n(\omega)$.
\end{theorem}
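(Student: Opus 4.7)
The plan is to verify that for every $\omega \in \Omega$ the sequence $(X_n, Y_n) := (\eta^n_{k(n,\omega)}, \eta^n_{1:k(n,\omega)-1})$ fits the framework of Theorem \ref{main theorem} and Theorem \ref{Fisher limit}, so that in the limit $V_\infty(\omega) J_\infty(\omega) = 1$. The Gaussian perturbation \eqref{IGD1} is the crucial ingredient, because it converts the probabilistic hypotheses into uniform deterministic bounds that hold pointwise in $\omega$.

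First I would combine Lemma \ref{vnjn} and Lemma \ref{vj>1} to conclude that $V_n(\omega)$ and $J_n(\omega)$ are nonincreasing and confined to $[1/J(\xi),\V(\xi)]$ and $[1/\V(\xi),J(\xi)]$, so the limits $V_\infty(\omega)$ and $J_\infty(\omega)$ exist and satisfy \eqref{polarVbound}. Passing to the limit in $V_n J_n \geq 1$ then yields one direction of \eqref{polarJn}, namely $V_\infty J_\infty \geq 1$.

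For the reverse inequality, orthogonality of $H_{\lambda,n}$ gives $\eta^n = H_{\lambda,n}\hat{\xi}_0^n + H_{\lambda,n}G_a^n$, and $H_{\lambda,n}G_a^n \overset{d}{=} G_a^n$ remains i.i.d.\ $\mathcal{N}(0,a)$ and independent of $H_{\lambda,n}\hat{\xi}_0^n$. Taking the $k$th coordinate produces the decomposition $X_n = X_{0,n} + \hat{G}_a$ required in \eqref{IGD-n}, from which I extract the pointwise bounds $\V(X_n|Y_n) \geq a$, $J(X_n|Y_n) \leq 1/a$, and
\begin{equation*}
\tfrac{1}{2}\log(2\pi e a) \;\leq\; h(X_n|Y_n) \;\leq\; \tfrac{1}{2}\log(2\pi e \V(\xi)),
\end{equation*}
the upper bound following because $\V(X_n) = \V(\xi)$ by orthogonality and conditioning reduces entropy. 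These deliver Conditions $1$ and $2$ of Theorem \ref{main theorem} and the uniform integrability of $\{h(X_n|Y_n)\}$; Condition $3$ on tail variances was already verified in the proof of Theorem \ref{polar theorem}; and \eqref{entropy condition 2} is exactly \eqref{basic1}. Finally, applying Corollary \ref{entropy power} to \eqref{basic1} yields $h_n(\omega) \leq h_{n+1}(\omega)$, so $\{h_n(\omega)\}$ is monotone and bounded, hence convergent, for every individual $\omega$, verifying \eqref{entropy condition 1}. Theorem \ref{main theorem} then gives $V_\infty(\omega) = \sigma^2(\omega) > 0$, and Theorem \ref{Fisher limit}, whose extra hypothesis \eqref{IGD-n} we have already verified, delivers $J_\infty(\omega) = 1/\sigma^2(\omega)$, establishing \eqref{polarJn}.

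The main obstacle is that a naive invocation of Theorem \ref{main theorem} via Theorem \ref{polar theorem} only produces the desired conclusion on an almost-sure set $O$, whereas \eqref{polarJn} is asserted for every $\omega$. This is exactly what the Gaussian perturbation \eqref{IGD1} is for: it promotes the probabilistic moment and entropy hypotheses to two-sided deterministic bounds on $h(X_n|Y_n)$, and the entropy-jump inequality then forces the monotonicity of $h_n(\omega)$ pathwise rather than merely in mean, so the machinery of Theorems \ref{main theorem} and \ref{Fisher limit} applies uniformly across $\Omega$.
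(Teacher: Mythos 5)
Your overall route coincides with the paper's: \eqref{polarVbound} and the existence of $V_\infty(\omega),J_\infty(\omega)$ come from the monotonicity and two--sided bounds of Lemmas \ref{vnjn} and \ref{vj>1}, the inequality $V_\infty J_\infty\ge 1$ from passing to the limit in $V_nJ_n\ge1$, and the reverse inequality from Theorem \ref{Fisher limit} after checking the hypotheses of Theorem \ref{main theorem} via the decomposition \eqref{IGD-n}. That part is fine.

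The gap is your final step, where you claim that Corollary \ref{entropy power} applied to \eqref{basic1} gives $h_n(\omega)\le h_{n+1}(\omega)$ for every $\omega$, hence pathwise convergence of $h_n(\omega)$ and validity of \eqref{entropy condition 1} for all $\omega$. Equation \eqref{basic1} only identifies the \emph{absolute value} $|h_{n+1}(\omega)-h_n(\omega)|$ with the entropy jump $\E[h(X_n*_\lambda X_n'|Y_n,Y_n')-h(X_n|Y_n)]$; Corollary \ref{entropy power} shows this jump is nonnegative, which tells you nothing about the sign of $h_{n+1}-h_n$. From the computation in Appendix \ref{A.1}, the sign depends on the bit $\omega_{n+1}$: on a ``0'' step $h_{n+1}=\E h(X_n*_\lambda X_n'|Y_n,Y_n')\ge h_n$, while on a ``1'' step $h_{n+1}=2h_n-\E h(X_n*_\lambda X_n'|Y_n,Y_n')\le h_n$. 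So $\{h_n(\omega)\}$ is not monotone along a general path (e.g.\ an alternating path), and it is precisely for this reason that the paper treats $\{h_n\}$ as a bounded martingale on $(\Omega,\mathcal{F},\P_\Omega)$ in Appendix \ref{A.2} and obtains convergence only for $\P_\Omega$-a.e.\ $\omega$. Consequently \eqref{entropy condition 1} — and hence \eqref{polarJn} via Theorems \ref{main theorem} and \ref{Fisher limit} — is only established on the almost-sure set $O$ of Theorem \ref{polar theorem}, not for every $\omega$. (This a.e.\ qualification is all the paper's own one-line proof delivers as well; only \eqref{polarVbound} genuinely holds for all $\omega$, by the monotone convergence of $V_n(\omega)$.) A secondary, minor looseness: your upper bound on $h(X_n|Y_n)$ is justified by ``conditioning reduces entropy,'' which is an inequality in expectation, whereas uniform integrability needs the pointwise bound \eqref{bounds to uni-intg} that the paper imports from the structure \eqref{IGD-n}.
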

\pf

 \eqref{polarJn} is a directly corollary of Theorem \ref{Fisher limit}, and
\eqref{polarVbound} comes from Lemma \ref{vj>1}.
\e


Below we make a rough estimate of the limit variance, which has close link to the error analysis in compression.
\begin{theorem}\label{V est}
	Under the conditions in Theorem \ref{polar theorem},
given any $\varepsilon$	satisfying $ \dfrac{1}{J(\xi)}<\varepsilon<\V(\xi)$, we have
	\begin{equation*}1-\dfrac{h(\xi)-1/2\log (2\pi e/J(\xi))}{ 1/2\log (\varepsilon J(\xi))}\le \P_\Omega\left(V_\infty(\omega)\le \varepsilon \right)\le \dfrac{1/2\log [2\pi e\V(\xi)]-h(\xi)}{1/2\log (\V(\xi)/\varepsilon)}.\end{equation*}
\end{theorem}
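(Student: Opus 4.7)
The plan is to turn the tail bound on $V_\infty$ into a two-sided Markov-type inequality on $h_\infty$, by first computing $\E_\Omega h_\infty$ exactly and then exploiting the uniform range of $h_\infty$ on the full-measure set $O$ from Theorem \ref{polar theorem}.

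The first step is to show that $\E_\Omega h_n = h(\xi)$ for every $n$. Since $\P_\Omega$ assigns measure $2^{-n}$ to each cylinder $S(b_1,\dots,b_n)$ and the map $\omega\mapsto k(n,\omega)$ is constant on each such cylinder and ranges bijectively over $\{1,\dots,2^n\}$, we have $\E_\Omega h_n=2^{-n}\sum_{k=1}^{2^n}\E h(\eta^n_k\mid \eta^n_{1:k-1})$. The chain rule for differential entropy, combined with the fact that $H_{\lambda,n}$ is orthogonal (so differential entropy is preserved) and with $\xi^n_1,\dots,\xi^n_{2^n}$ i.i.d., gives $\sum_k \E h(\eta^n_k\mid\eta^n_{1:k-1}) = h(\eta^n) = h(\xi^n) = 2^n h(\xi)$, and thus $\E_\Omega h_n = h(\xi)$.

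Next I would pass to the limit. Lemma \ref{vj>1} gives the deterministic bound $V_n(\omega)\le \V(\xi)$, so by Gaussian maximum entropy $h_n(\omega)\le \tfrac12\log(2\pi e\V(\xi))$. For a uniform lower bound I use \eqref{IGD1}: since $\eta^n_k=[H_{\lambda,n}\hat\xi_0^n]_k+\hat G_a$ with $\hat G_a\sim\mathcal{N}(0,a)$ independent of the previous coordinates, the entropy-power inequality (or directly, convolution with a Gaussian increases entropy) yields $h(\eta^n_k\mid \eta^n_{1:k-1}=y)\ge \tfrac12\log(2\pi e a)$, hence $h_n(\omega)\ge \tfrac12\log(2\pi e a)$ for every $\omega$. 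Combined with $h_n(\omega)\to h_\infty(\omega)$ on $O$ (Theorem \ref{polar theorem}, with $\P_\Omega(O)=1$), dominated convergence gives $\E_\Omega h_\infty=h(\xi)$.

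Third, I would translate the event $\{V_\infty\le\varepsilon\}$ into an event on $h_\infty$. On $O$, equation \eqref{polarhn} reads $h_\infty=\tfrac12\log(2\pi e V_\infty)$, so with $c=\tfrac12\log(2\pi e\varepsilon)$ we have $\{V_\infty\le\varepsilon\}=\{h_\infty\le c\}$ up to a null set. The bound \eqref{polarVbound} gives $V_\infty\in[1/J(\xi),\V(\xi)]$ a.s., equivalently $h_\infty\in[A,B]$ a.s., where $A=\tfrac12\log(2\pi e/J(\xi))$, $B=\tfrac12\log(2\pi e\V(\xi))$. The hypothesis $1/J(\xi)<\varepsilon<\V(\xi)$ places $c$ strictly between $A$ and $B$.

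Finally, I would finish with the elementary Markov argument. Writing $p=\P_\Omega(h_\infty\le c)$ and splitting the expectation $\E_\Omega h_\infty=h(\xi)$ over $\{h_\infty\le c\}$ and its complement, using $h_\infty\in[A,c]$ on the first set and $h_\infty\in(c,B]$ on the second, gives
\begin{equation*}
Ap+c(1-p)\;\le\;h(\xi)\;\le\;cp+B(1-p).
\end{equation*}
Solving the right inequality for $p$ yields $p\le (B-h(\xi))/(B-c)$, which after substituting the values of $B$ and $c$ is exactly the claimed upper bound. Solving the left inequality gives $p\ge (c-h(\xi))/(c-A)=1-(h(\xi)-A)/(c-A)$, which is the claimed lower bound.

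The main obstacle I anticipate is the uniform boundedness of $h_n(\omega)$ over all $\omega\in\Omega$ (not just on $O$); once the Gaussian-perturbation lower bound is in hand the rest of the argument is bookkeeping. Everything else follows from the chain rule, orthogonality of the Hadamard transform, and the already-established identities \eqref{polarhn} and \eqref{polarVbound}.
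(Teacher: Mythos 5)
Your proposal is correct and takes essentially the same route as the paper's (very terse) proof: establish $\E_\Omega h_\infty = h(\xi)$ via the chain rule/martingale identity together with the uniform bounds $\tfrac12\log(2\pi e a)\le h_n(\omega)\le\tfrac12\log(2\pi e \V(\xi))$, then combine \eqref{polarhn} and the range $V_\infty\in[1/J(\xi),\V(\xi)]$ from Lemma \ref{vj>1} with a two-sided Markov (``Chebyshev'') argument. Your write-up simply supplies the details the paper omits, and the algebra of the final bounds checks out.
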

\pf Since $\E|h_n|\le C[\log(2\pi e (\V(\xi)+1))+\log(2\pi e a)]$, $h_n\xrightarrow{L^1}h_\infty, \forall \omega\in O$. Then using
$\E h_\infty(\omega)=h(\xi)$ , Lemma \ref{vj>1} and Chebyshev inequality.
\e

The theorem tells us that it is possible to  reduce the variances of the signals by Hadamard transform to a  very small value for a large proportion of polar paths, even if the input signal has large variance.  See an example in Appendix \ref{a ex}. These bounds can also give a theoretical estimate for the performance of Hadamard compression in \cite{ULE} when code length $N=2^n$ is sufficiently large.

\subsection{Theoretical study of Hadamard compression}

In Chapter 4 of \cite{ULE}, the author studied Hadamard compression for continuous input $\xi$ with form \eqref{IGD1}.

Let  $\bm{\eta}:=(\eta_1,\eta_2,\cdots, \eta_{2^n})^T= H_n\bm{\xi}:=H_n(\xi_1,\xi_2,\cdots,\xi_{2^n})^T$ be the received signals of $n$-iteration Hadamard transform. From numerical examples the author in \cite{ULE}
deduced that a high entropy subset of $\eta_i$'s are sufficient to recover all $\bm{\eta}$. This deduction was
verified by further examples of sampling and obtained a satisfactory performance. However, there was no theoretical deduction supporting this performance. For example, a natural question is why those  subset of $\eta_i$'s with lower entropy can be compressed.
Note that for continuous signals, low entropy does not necessarily imply small variance, and hence does not necessarily mean small uncertainty (see the example in Appendix \ref{a ex}).   Indeed, in the literature (see e.g. \cite{ULE}, \cite{2011Hadamardpaper}, \cite{ArikanrealF2021} and references therein), there was no theoretic study concerning Hadamard compression.

 Based on the research in this article, we can now conduct some theoretical analysis on Hadamard compression. Our analysis will consist of three recursive statements. The statements below are valid for almost every polarization path  $\omega \in \Omega$.

 {\it Statement 1.}
When $n$ tends to $\infty,$ the distribution of received signal $\eta^n_k(\omega)$  conditional on the values of  previous received signals  converges to a Gaussian distribution with corresponding conditional variance. This statement follows from \eqref{polarhn} or \eqref{polarhn1}.

 {\it Statement 2.}  The corresponding conditional variance $\V(\eta^n_k(\omega)|\eta^n_{1:k(\omega)-1})$ converges to a deterministic value $V_\infty(\omega).$ This statement follows directly from \eqref{polarVn}.

 {\it Statement 3.}  The conditional distribution  $(\eta^n_k(\omega)|\eta^n_{1:k(\omega)-1})$ is insensitive to the values of previous received signals in the sense that, for almost every realization $\eta^n_1=y_1,\eta^n_2=y_2,\cdots,\eta^n_{k-1}=y_{k-1},$ the conditional distribution of $(\eta^n_k(\omega)|\eta^n_1=y_1,\eta^n_2=y_2,\cdots,\eta^n_{k-1}=y_{k-1})$ converges  to  the  Gaussian distribution with the same variance  $V_\infty(\omega).$ This statement can be seen from the deduction below. For almost every $\omega\in \Omega$, when $n$ is sufficiently large, there exists $\sigma^n_k(\omega)>0$, where $k=k(n,\omega)$, such that for almost any realization $(y_1,y_2,\cdots, y_{k-1})$,
 \begin{align*}
	&\E h(\eta^n_k(\omega)|\eta^n_1,\eta^n_2,\cdots,\eta^n_{k-1})\overset{(a)}{\approx }\frac{1}{2}\log(2\pi e (\sigma^n_k(\omega))^2)\\
 	&\overset{(b)}{\approx }\frac{1}{2}\log(2\pi e \V(\eta^n_k|\eta^n_1=y_1,\eta^n_2=y_2,\cdots,\eta^n_{k-1}=y_{k-1})).
 \end{align*}
where $(a)$ comes from  \eqref{polarhn}, and $(b)$ comes from  \eqref{polarVn}.

 With the above statements, we can now conduct some theoretical analysis about the simulation results of \cite{ULE}.  By Statement 1, when $n$ is large enough, we may roughly regard the received signals conditional on the  knowledge of previous signals are Gaussian distributed variables. It is notably known that the entropy $h(\eta)$ of a Gaussian variable $\eta$ is uniquely determined by its variance $\sigma$, that is, $h(\eta)= \frac{1}{2}\log(2\pi e \sigma^2).$ Therefore, lower conditional entropy implies smaller conditional variance. Statement 2 says that, when $n$ is large enough, the conditional variance is nearly deterministic, and hence nearly does not depend on the condition of previous signals. Further, by Statement 3, the conditional distribution $\eta^n_k(\omega)$ will always converge  to  the Gaussian distribution with variance  $V_\infty(\omega),$ and will not be influenced by the specific values of previous signals.

 We now provide a solid theoretical analysis explaining the simulation results of \cite{ULE}. When $n$ is large enough,  for any $k,~1\le k \le 2^n,$ the received signal $\eta^n_k$ is roughly Gaussian distributed with variance, say $\sigma^n_k,$ and $\sigma^n_k$ is irrelevant to the previous received signals. It implies that, those $\eta^n_k$'s with lower entropy have less variance, and hence have smaller uncertainty. Consequently we can safely use  high entropy subset of $\eta^n_k$'s  to recover all the received signals.   Further more, since the conditional distribution is insensitive to the realization of previous signals,  when $n$ is large enough, the choice of compressed positions can be fixed for whatever input signals. This means  that the experimental Hadamard  compression performed  in \cite{ULE} exhibits universal applicability across diverse input signals.



\begin{appendix}

\end{appendix}


\begin{appendix}
\section{Some Properties of Hadamard Transform}\label{n=2}

\subsection{Proof of \eqref{basic1}}\label{A.1}
	
Below we use the notations introduced in Section \ref{section1.3}. We show first a simple example for Hadamard transform when $n=2$.
\begin{equation}\label{Hadamard n=2}
	\begin{aligned}
		&(\eta^2_1,\eta^2_2,\eta^2_3,\eta^2_4)^T=H_{\lambda ,2}(\xi_1,\xi_2,\xi_3,\xi_4)^T\\
		&=( (\xi_1\ast_\lambda \xi_2 )\ast_{\lambda}(\xi_3\ast_\lambda \xi_4), (\xi_1\ast_\lambda \xi_2 )\ast_{\sqrt{1-\lambda^2}}(-\xi_3\ast_\lambda \xi_4),\\
		&\;\;\;\;\;\;\;\;(\xi_1\ast_{\sqrt{1-\lambda^2}} (-\xi_2) )\ast_{\lambda}(\xi_3\ast_{\sqrt{1-\lambda^2}} (-\xi_4)),\\
		& (\xi_1\ast_{\sqrt{1-\lambda^2}} (-\xi_2) )\ast_{\sqrt{1-\lambda^2}}(\xi_3\ast_{\sqrt{1-\lambda^2}} (-\xi_4))) ^T\\
		&=(\eta^1_1\ast_\lambda \eta^1_3, \eta^1_1\ast_{\sqrt{1-\lambda^2}} (-\eta^1_3),\eta^1_2\ast_\lambda \eta^1_4, \eta^1_2\ast_{\sqrt{1-\lambda^2}} (-\eta^1_4) ),
	\end{aligned}
\end{equation}

We focus on four differential  entropies
\begin{equation}\label{four entropy}
	h(\eta^2_1),\;\;  h(\eta^2_2|\eta^2_1),\;\; h(\eta^2_3|\eta^2_{1:2}),\;\; h(\eta^2_4|\eta^2_{1:3}).
\end{equation}
Not that the last three are random variables.
By \eqref{Hadamard n=2}, we have
\begin{equation}
	\E h(\eta^2_2|\eta^2_1)=\E h(\eta^1_1\ast_{\sqrt{1-\lambda^2}}(-\eta^1_3)|\eta^1_1\ast_\lambda \eta^1_3),
\end{equation}
\begin{equation}
	\E h(\eta^2_3|\eta^2_{1:2})=\E h(\eta^1_2\ast_\lambda \eta^1_4|\eta^1_1\ast_\lambda \eta^1_3, \eta^1_1\ast_{\sqrt{1-\lambda^2}}( -\eta^1_3)),
\end{equation}
\begin{equation}
	\E h(\eta^2_4|\eta^2_{1:3})= \E h(\eta^1_2\ast_{\sqrt{1-\lambda^2}} (-\eta^1_4)|\eta^1_1\ast_\lambda \eta^1_3, \eta^1_1\ast_{\sqrt{1-\lambda^2}} (-\eta^1_3),\eta^1_2\ast_\lambda \eta^1_4).
\end{equation}
Since conditional entropy is invariant  under the orthogonal transformations and $H_\lambda$ is an orthogonal transformation, we can write
\begin{equation}
	\E h(\eta^2_3|\eta^2_{1:2})=\E h(\eta^1_2\ast_\lambda \eta^1_4|\eta^1_1,\eta^1_3),
\end{equation}
\begin{equation}
	\E h(\eta^2_4|\eta^2_{1:3})= \E h(\eta^1_2\ast_{\sqrt{1-\lambda^2}} (-\eta^1_4)|\eta^1_1,\eta^1_3,\eta^1_2\ast_\lambda \eta^1_4).
\end{equation}
Then the sums of conditional entropy hold that
\begin{equation}
	\begin{aligned}
		& h(\eta^2_1)+\E h(\eta^2_2|\eta^2_1)= h(\eta^1_1\ast_\lambda \eta^1_3)+\E h(\eta^1_1\ast_{\sqrt{1-\lambda^2}}(-\eta^1_3)|\eta^1_1\ast_\lambda \eta^1_3)\\
		&= h(\eta^1_1\ast_\lambda \eta^1_3, \eta^1_1\ast_{\sqrt{1-\lambda^2}}(-\eta^1_3))\\
		&\overset{(a)}{=}h(\eta^1,\eta^3)\\
		&\overset{(b)}{=}2h(\eta^1_1),
	\end{aligned}
\end{equation}
and
\begin{equation}
	\begin{aligned}
		&\E h(\eta^2_3|\eta^2_{1:2})+\E h(\eta^2_4|\eta^2_{1:3})=\E h(\eta^1_2\ast_\lambda \eta^1_4|\eta^1_1,\eta^1_3)\\
		&\;\;\;\;\;\;\;\;+\E h(\eta^1_2\ast_{\sqrt{1-\lambda^2}} (-\eta^1_4)|\eta^1_1,\eta^1_3,\eta^1_2\ast_\lambda \eta^1_4)\\
		&=\E h((\eta^1_2\ast_{\sqrt{1-\lambda^2}} (-\eta^1_4),\eta^1_2\ast_\lambda \eta^1_4)|\eta^1_1,\eta^1_3)\\
		&\overset{(c)}{=}\E h((\eta^1_2, \eta^1_4)|\eta^1_1,\eta^1_3)\\
		&\overset{(d)}{=}2\E h(\eta^1_2|\eta^1_1),
	\end{aligned}
\end{equation}
where $(a)$ and $(c)$ come from the invariance of entropy under the orthogonal transformations, $(b)$ and $(d)$ come from the fact that  $(\eta^1_1,\eta^1_2)$ and $(\eta^1_3,\eta^1_4)$ are \textit{i.i.d.}, according to the iteration relation of Kronnecker product.
Thus following the general entropy jump inequality (see \cite[Theorem 2]{2004Solution}  as an example),  we conclude that
\begin{equation}\label{core,n=2,1}
	|h(\eta^1_1)-\E h(\eta^2_2|\eta^2_1)|=| h(\eta^2_1)-h(\eta^1_1)|=h(\eta^1_1\ast_\lambda \eta^1_3)-h(\eta^1_1)=h(\eta^1_1\ast_\lambda (\eta^1_1)')-h(\eta^1_1),
\end{equation}
and
\begin{equation}\label{core,n=2,2}
	\begin{aligned}
		&|\E h(\eta^1_2|\eta^1_1)|-\E h(\eta^2_4|\eta^2_{1:3})|=|\E h(\eta^2_3|\eta^2_{1:2})-\E h(\eta^1_2|\eta^1_1)|\\
		&=\E h(\eta^1_2\ast_\lambda \eta^1_4|\eta^1_1,\eta^1_3)-\E h(\eta^1_2|\eta^1_1)=\E h(\eta^1_2\ast_\lambda (\eta^1_2)'|\eta^1_1,(\eta^1_1)')-\E h(\eta^1_2|\eta^1_1),
	\end{aligned}
\end{equation}
where $(\eta^1_i)'$ is an \textit{i.i.d.} copy of $\eta^1_i$.
Thus \eqref{basic1} is valid for $n=2.$

With the above simple example in hand, we now verify \eqref{basic1} for general $n$-iteration of  Hadamard Transform.
As described in Section \ref{section1.3}, let $\Omega:=\{0,1\}^\infty$ be the polarization path space. For
$\omega\in\Omega$, we define
\begin{equation}\label{decimal}
	k(n,\omega)=[\omega_1\omega_2\cdots\omega_n]_{2\to 10}+1,
\end{equation}
where $[\cdot]_{2\to10}$ indicates the conversion of binary to decimal. We denote
	\begin{equation}
		\eta^n_k(\omega)=\eta^n_{k(n,\omega)}.
	\end{equation}
Then for each $n$-iteration polarization path $(\omega_1,\omega_2,\cdots\omega_n),~ \eta^n_k(\omega)$ is exactly the $k$th component $\eta^n_k$ of $ \eta^n.$
By  \eqref{decimal}, if $\omega_{n+1}=0$ then $j=k(n+1,\omega)$ is odd .  Similar to the above  discussion of the case $n=2$, by the iteration relation of $H_{\lambda,n}$  and the  orthogonal property of $H_\lambda$, we have for $j=2l-1, l=1,2,\cdots, 2^n$,
\begin{equation*}
	\begin{aligned}
		&\E h[\eta^{n+1}_{j}|\eta^{n+1}_{1:{j-1}}]=\E  h[\eta^{n}_{l}\ast_\lambda \eta^n_{2^n+l}|\eta^{n+1}_{1:{j-1}}]\\
		&=\E h[\eta^{n}_{l}\ast_\lambda \eta^n_{2^n+l}|\eta^{n}_{1}\ast_\lambda \eta^n_{2^n+1},\eta^{n}_{1}\ast_{\sqrt{1-\lambda^2}} (-\eta^n_{2^n+1})\cdots,\\ &\;\;\;\;\;\;\;\; \eta^{n}_{l-1}\ast_\lambda \eta^n_{2^n+l-1},\eta^{n}_{l-1}\ast_{\sqrt{1-\lambda^2}} (-\eta^n_{2^n+l-1}) ]\\
		&=\E h[\eta^{n}_{l}\ast_\lambda \eta^n_{2^n+l}|\eta^{n}_{1},\eta^n_2\cdots, \eta^n_{l-1}, \eta^n_{2^n+1},\eta^n_{2^n+2},\cdots, \eta^n_{2^n+l-1} ].
	\end{aligned}
\end{equation*}

Similarly, if $\omega_{n+1}=1$ then $j=k(n+1,\omega)$ is even,  and thus for $j=2l$ we have
\begin{equation*}
	\begin{aligned}
		&\E h[\eta^{n+1}_{j}|\eta^{n+1}_{1:{j-1}}]\\
		&=\E h[\eta^{n}_{l}\ast_{\sqrt{1-\lambda^2}} (-\eta^n_{2^n+l})|\eta^{n+1}_{1:{j-1}}]\\
		&=\E h[\eta^{n}_{l}\ast_{\sqrt{1-\lambda^2}} (-\eta^n_{2^n+l})|\eta^{n}_{1}\ast_\lambda \eta^n_{2^n+1},\eta^{n}_{1}\ast_{\sqrt{1-\lambda^2}} (-\eta^n_{2^n+1}),\\
		&\;\;\;\;\;\; \cdots, \eta^{n}_{l-1}\ast_\lambda \eta^n_{2^n+l-1},\eta^{n}_{l-1}\ast_{\sqrt{1-\lambda^2}} (-\eta^n_{2^n+l-1}), \eta^{n}_{l}\ast_\lambda \eta^n_{2^n+l} ]\\
		&=\E h[\eta^{n}_{l}\ast_{\sqrt{1-\lambda^2}} (-\eta^n_{2^n+l})|\eta^{n}_{1},\eta^n_2\cdots, \eta^n_{l-1}, \eta^n_{2^n+1},\eta^n_{2^n+2},\cdots, \eta^n_{2^n+l-1},\eta^{n}_{l}\ast_\lambda \eta^n_{2^n+l}  ]\\
		&=\E h[\eta^{n}_{l}\ast_{\sqrt{1-\lambda^2}} (-\eta^n_{2^n+l})|\eta^{n}_{1},\eta^n_2\cdots, \eta^n_{l-1}, \eta^n_{2^n+1},\eta^n_{2^n+2},\cdots, \eta^n_{2^n+l-1},\eta^{n}_{l}\ast_\lambda \eta^n_{2^n+l}  ].
	\end{aligned}
\end{equation*}
Thus the sum of $j=2l-1$ and $j+1=2l$ satisfy that
\begin{equation}
	\begin{aligned}
		&\E h[\eta^{n+1}_{j}|\eta^{n+1}_{1:{j-1}}]+\E h[\eta^{n+1}_{j+1}|\eta^{n+1}_{1:{j}}]\\
		&=\E h[\eta^{n}_{l}\ast_\lambda \eta^n_{2^n+l}|\eta^{n}_{1},\eta^n_2\cdots, \eta^n_{l-1}, \eta^n_{2^n+1},\eta^n_{2^n+2},\cdots, \eta^n_{2^n+l-1} ]\\
		&\;\;\;\;+\E h[\eta^{n}_{l}\ast_{\sqrt{1-\lambda^2}} (-\eta^n_{2^n+l})|\eta^{n}_{1},\eta^n_2\cdots, \eta^n_{l-1}, \eta^n_{2^n+1},\eta^n_{2^n+2},\cdots, \eta^n_{2^n+l-1},\eta^{n}_{l}\ast_\lambda \eta^n_{2^n+l}  ]\\
		&=\E h[(\eta^{n}_{l}\ast_\lambda \eta^n_{2^n+l},\eta^{n}_{l}\ast_\lambda \eta^n_{2^n+l}) |\eta^{n}_{1},\eta^n_2\cdots, \eta^n_{l-1}, \eta^n_{2^n+1},\eta^n_{2^n+2},\cdots, \eta^n_{2^n+l-1} ]\\
		&=\E h[(\eta^{n}_{l}, \eta^n_{2^n+l}) |\eta^{n}_{1},\eta^n_2\cdots, \eta^n_{l-1}, \eta^n_{2^n+1},\eta^n_{2^n+2},\cdots, \eta^n_{2^n+l-1} ]\\
		&=2\E h(\eta^{n}_{l} |\eta^{n}_{1},\eta^n_2\cdots, \eta^n_{l-1}).
	\end{aligned}
\end{equation}
Eventually, similar to \eqref{core,n=2,1} and \eqref{core,n=2,2} in the case $n=2$, we can deduce that,
\begin{equation}\label{core for general n, APP}
	\begin{aligned}
		&|\E h(\eta^{n+1}_{k(n+1,\omega)}|\eta^{n+1}_{1:k(n+1,\omega)-1} )-\E h(\eta^{n}_{k(n,\omega)}|\eta^{n}_{1:k(n,\omega)-1} )|\\
		&=\E h(\eta^{n}_{k(n,\omega)}\ast_\lambda (\eta^n_{k(n,\omega)})'|\eta^{n}_{1:k(n,\omega)-1},(\eta^{n}_{1:k(n,\omega)-1})' )-\E h(\eta^{n}_{k(n,\omega)}|\eta^{n}_{1:k(n,\omega)-1} ),\;\;\forall n, \omega,
	\end{aligned}
\end{equation}
where $X'$ denotes an \textit{i.i.d.} copy of  $X$.  Thus \eqref{basic1}  is verified  for general $n$-iteration of  Hadamard Transform.



\subsection{Proofs of \eqref{basic2} and \eqref{basic3}\label{A.2}}
We still start with $n=2$. If $(\xi_1,\xi_2,\xi_3,\xi_4)$ has the form $\xi_i=\tilde{\xi}_i+G_i$ as in \eqref{IGD1}
, then
\begin{equation*}
\eta^2=H_2\tilde{\xi}+\bm{G},
\end{equation*}
and it is clearly that for any $k=1,2,3,4$, $\eta^2_k$ has the form $\eta^2_k=\hat{\eta}^2_k+G_k$, where
\begin{equation}
\hat{\eta}^2_k\overset{d}{=}\frac{\tilde{\xi}_1+\tilde{\xi}_2+\tilde{\xi}_3+\tilde{\xi}_4}{2}+G_1\mbox{~~or~~}\hat{\eta}^2_k\overset{d}{=}\frac{\tilde{\xi}_1+\tilde{\xi}_2-\tilde{\xi}_3-\tilde{\xi}_4}{2}+G_1,
\end{equation}
$G_k$ is Gaussian and independent with  $\hat{\eta}^2_k$, and $\eta^2_{1:k-1}$ (if $k>1$). Thus  by using (4.12)(a) in \cite{ULE} (note that our index starts from 1 and \cite{ULE}(4.12)(a)  from 0), we obtain the bounds of the four (random) entropies that for all $k=1,2,3,4,$
\begin{equation}\label{bounds for 4 entropy}
\frac{1}{2}\log(2\pi e \V(G_1))\le  h(\eta^2_k|\eta^2_{1:k-1})\le h(\eta^2_k) \le \frac{1}{2}\log(2\pi e (\V(\tilde{\xi}_1)+\V(G_1)).
\end{equation}

Now we turn to general $n$. We assume also that $\xi = \hat{\xi}_0 + G_a$ as  in \eqref{IGD1} .  Similar to \eqref{bounds for 4 entropy}, we can get the bounds of random variables $ h(\eta^{n}_{k(n,\omega)}|\eta^{n}_{1:k(n,\omega)-1})$ as below.
\begin{equation}\label{bounds to uni-intg}
	\frac{1}{2}\log(2\pi e a)\le  h(\eta^{n}_{k(n,\omega)}|\eta^{n}_{1:k(n,\omega)-1})\le \frac{1}{2}\log(2\pi e (\V(\hat{\xi}_0)+a)),\;\;\forall \omega, n.
\end{equation}
Next, as described at the beginning of Section \ref{section6},  we impose a probability measure $\P_\Omega$ on $(\Omega, \mathcal{F} )$ in such a way that  $\P_\Omega(S(b_1,b_2,\cdots,b_n))=1/2^n$  for each cylindrical set. For  $n\ge 1,$  let $\mathcal{F}_n$ be the $\sigma$-algebra generated by cylindrical sets $S(b_1,b_2,\cdots, b_i),1\le i\le n, b_1,\cdots,b_i\in\{0,1\}$, and let $\mathcal{F}_0$ be the trivial $\sigma$-algebra consisting of  null sets and $\Omega$ only. Then $\{\mathcal{F}_n, n\ge 0\}$ is a filtration.  For $n\ge 1,$ we denote
\begin{equation} h_n(\omega):=\E_{\eta^n_{1:k(n,\omega)-1}}  h(\eta^n_{k(n,\omega)}| \eta^n_{1:k(n,\omega)-1}  ).
\end{equation}
Then the random variable  $h_n(\omega)$ on the probability space $(\Omega, \mathcal{F},\P_\Omega)$ is $\mathcal{F}_n$- measurable. Similar to the discussion in \cite{Polar2009} and \cite{ULE}, we can show that $\{h_n\}_{n \ge 1}$ is a martingale about the filtration $\{\mathcal{F}_n, n\ge 0\}$.
In fact  $\{h_n\}_{n \ge 1}$ is a bounded martingale because by \eqref{bounds to uni-intg} we have
\begin{equation}\label{basic2a}
	\frac{1}{2}\log(2\pi e a)\leq	h_n\leq \frac{1}{2}\log(2\pi e \V(\xi)).
\end{equation}
Thus by the well known martingale convergence theorem (cf. e.g. Theorem 9.4.5 in \cite{ZKL}), we conclude that there exists an integrable random variable $h_\infty$ on $(\Omega,\mathcal{F},\P_\Omega)$ such that
\begin{equation}\label{martingale}
 \lim_{n\to\infty}h_n  = h_\infty, ~~a.s.~~ \omega \in \Omega .
\end{equation}

Going back to the notations we introduced in \eqref{Xn, Yn}):
\begin{equation*}\label{Xn, Yn, APP}
X_n(\omega)=\eta^n_{k(n,\omega)},\;\; Y_n(\omega)=(\eta^n_{1},\eta^n_{2},\cdots,\eta^n_{k(n,\omega)-1})=\eta^n_{1:k(n,\omega)-1},
\end{equation*}
we have proved \eqref{basic2} by \eqref{basic2a}, and proved \eqref{basic3} by \eqref{martingale}.

\section{Proof of Proposition \ref{bound on H and B}}\label{proof of bound on H and B}
	\begin{proof}   In the following, without loss of generality, we always assume $q = p_t^*f$ and $X$ be the random variable with density $f$. (Or just take an almost surely convergent subsequence and the desired results follow from dominated convergence theorem.)
	
			From \eqref{OU-prop2} we have
			\begin{equation}
			q(x) = \E[g_{1-e^{-2t}}(x-e^{-t}X)]\leq  \frac{1}{\sqrt{2\pi(1-e^{-2t})}}.
			\end{equation}
	
			2. Using \eqref{OU-prop2} we obtain
			\begin{equation}
			q'(x) = -\frac{1}{1-e^{-2t}}\E\left[(x-e^{-t}X)g_{1-e^{-2t}}(x-e^{-t}X)\right].
			\end{equation}
			By Cauchy-Schwartz inequality,
			\begin{equation}
			[q'(x)]^2 \leq \frac{1}{(1-e^{-2t})^2}q(x)\E\left[(x-e^{-t}X)^2g_{1-e^{-2t}}(x-e^{-t}X)\right].
			\end{equation}
			Let
			\begin{equation}
			U_t(x) = \frac{x^2g_{1-e^{-2t}}(x)}{g_{1-e^{-4t}}(x)}.
			\end{equation}
			Clearly $\sup\limits_{y\in \R} U_t(y) < \infty$.  Denote $B_t = \frac{\sup\limits_{y\in \R} U_t(y)}{(1-e^{-2t})^2}$,  then
			\begin{equation}
			[q'(x)]^2 \leq B_tq(x)\E\left[g_{1-e^{-4t}}(x-e^{-t}X)\right] = B_tq(x)[p_{2t}^*(e^tX)](x).
			\end{equation}
		
			3. For $s\in(0,t)$, let $\varphi_{s, t}(x, y) = \frac{g_t(x+y)}{g_s(x)}$, and $\alpha(s,t,M) = \inf\limits_{x\in\R,|y|\leq M} \varphi_{s,t}(x,y)$. It is not hard to see that $\alpha(s,t,M) > 0$ for any $M \geq 0$. Therefore, take $s = 1 - e^{-t}$ and $M > 1$ such that $l(M) < 1$, we have
			\begin{equation}
			\begin{aligned}
			q(x) &= \E\left[g_{1-e^{-2t}}(x-e^{-t}X)\right] \geq \E\left[g_{1-e^{-2t}}(x-e^{-t}X)\mathbf{1}_{\{|X|\leq M\}}\right]\\
			&\geq\P(|X|\leq M)\alpha(1-e^{-t}, 1-e^{-2t}, e^{-t}M)g_{1-e^{-t}}(x)\\
			&\geq \left(1 - \E[X^2\mathbf{1}_{\{|X|\geq M\}}]\right)\alpha(1-e^{-t}, 1-e^{-2t}, e^{-t}M)g_{1-e^{-t}}(x)\\
			& = (1 - l(M))\alpha(1-e^{-t}, 1-e^{-2t}, e^{-t}M)g_{1-e^{-t}}(x).
			\end{aligned}
			\end{equation}
			The desired result follows by taking $A_{t, l} = (1-l(M))\alpha(1-e^{-t}, 1-e^{-2t}, e^{-t}M)$.
	\end{proof}
	
	\section{Proof of Proposition \ref{L of HtL}}\label{proof of L of HtL}
	\begin{proof}   For any $t \geq 0, q\in\mathcal{H}_{t, l}$, let $q = p_t^*f$ and $X$ be the random variable with density $f$. Let $L_G$ be the tail variance of standard Gaussian, then for any $ R > 2$,
		\begin{equation}
		\begin{aligned}
		&\int_{|x|\geq R} x^2q(x)dx = \E\left[(e^{-t}X+\sqrt{1-e^{-2t}}G)^2\mathbf{1}_{\{|e^{-t}X+\sqrt{1-e^{-2t}}G|\geq R\}}\right]\\
		&\leq 2\E\left[(e^{-2t}X^2+(1-e^{-2t})G^2)(\mathbf{1}_{\{|e^{-t}X|\geq R/2\}}+\mathbf{1}_{\{|\sqrt{1-e^{-2t}}G|\geq R/2\}})\right]\\
		&\leq 2\left[ e^{-2t}l\left( R/2\right) + e^{-2t}l(0)\P\left(|G|\geq  R/2\right)
		+ (1-e^{-2t})\P\left(|X|\geq  R/2\right) + (1-e^{-2t})L_G\left( R/2\right)\right]\\
		& \leq 2\left[e^{-2t}l\left( R/2\right) + e^{-2t}l(0)L_G\left( R/2\right)
		+ (1-e^{-2t})l\left( R/2\right) + (1-e^{-2t})L_G\left( R/2\right)\right]\\
		& \leq 2l\left( R/2\right) + 2(l(0)+1)L_G\left( R/2\right).
		\end{aligned}
		\end{equation}
		For $R\in[0,2]$,
		$$\int_{|x|\geq R} x^2q(x) \leq \int_{|x|\geq 0} x^2q(x) = \V(p_t^*f) = e^{-2t}\V(f) + 1 - e^{-2t} \leq l(0) + 1.$$
		Take
		\begin{equation}
		l'(R) = \left\{\begin{aligned} & \max\{l(0) + 1\ ,\ 2l(1)+2(l(0)+1)L_G(1)\}\ ,\ 0\leq R \leq 2 \\
		& 2l\left( R/2\right) + 2(l(0)+1)L_G\left( R/2\right)\ ,\ R > 2\end{aligned} \right. ,
		\end{equation}
		then $l'$ decreases to 0 and  $L_q\leq l'$. \e
		
		\section{Proof of Proposition \ref{continuity on h V KL}}\label{proof of continuity on h V KL}
		\pf  1. By Proposition \ref{bound on H and B}, there exists a constant $A > 0$ such that
		\begin{equation}
		e^{-A(1+x^2)} \leq q(x) \leq e^{A(1+x^2)}, \ \forall q\in\mathcal{H}_{t, l}.
		\end{equation}
		Thus
		\begin{equation}
		|\log q(x)| \leq A(1+x^2),\ \forall q\in\mathcal{H}_{t, l}.
		\end{equation}
		Suppose $q_n, q\in\mathcal{H}_{t, l}$ and  $\|q_n-q\|_{L^1_{1+x^2}}\rightarrow 0$, then
		\begin{equation}
		\begin{aligned}
		&|h(q_n) - h(q)| \leq \int_\R |q(x)\log q(x) - q_n(x)\log q_n(x)|dx\\
		&\leq \int_\R q(x)|\log q(x) - \log q_n(x)|dx + \int_\R |q(x) - q_n(x)||\log q_n(x)|dx \\
		&\leq \int_\R q(x)|\log q(x) - \log q_n(x)|dx + A\|q_n-q\|_{L^1_{1+x^2}}  \\
		&:= I_n + A\|q_n-q\|_{L^1_{1+x^2}}.
		\end{aligned}
		\end{equation}
		For any subsequence $n_k$, there exists a further subsequence $n_{k_j}$ such that $q_{n_{k_j}}\xrightarrow{a. e. } q$ as $q_{n_k} \xrightarrow{L^1} q$. Since
		\begin{equation}
		q(x)|\log q(x) - \log q_{n_{k_j}}(x)| \leq 2Aq(x)(1+x^2) \in L^1,
		\end{equation}
		we conclude that $I_{n_{k_j}}\rightarrow 0$ by dominated convergence theorem. The above argument shows that for any subsequence $I_{n_k}$,  we can find a further subsequence $I_{n_{k_j}}$ converging to 0,  which implies $I_n\rightarrow 0$, and hence $|h(q_n) - h(q)|\rightarrow 0$.
		
		2. For any $f,g\in\mathcal{H}_{t, l}$ we have
		\begin{equation}
		|\V(f) - \V(g)| \leq \int_\R x^2|f(x)-g(x)|dx \leq \|f-g\|_{L^1_{1+x^2}}
		\end{equation}
		
		3. Because $D(f) = \frac{1}{2}\log(2\pi e\V(f)) - h(f)$, the desired result follows from the first two parts of Proposition \ref{continuity on h V KL}.
		\section{Proof of Proposition \ref{F and L condition}}\label{proof of F and L condition}
		Fix $ \epsilon >0$, take $J_1 = \frac{2J_0}{\epsilon}$. Recall $J(X_n|Y_n)\leq J_0$, we have
		\begin{equation}\label{J}
		\P\left(J(X_n|Y_n) \leq J_1\right)\geq 1 -  \frac{\E J(X_n|Y_n)}{J_1} \geq  1 - \frac{\epsilon}{2}.
		\end{equation}
		Since $l(R)$ decreases to 0 as $R\rightarrow \infty$, it is able to take a strictly increasing sequence $\{R_m,m\geq 0\}$ such that $R_0 = 0$ and
		\begin{equation}
		l(R_m) \leq \min\left\{\frac{1}{m2^m},\frac{l(R_{m-1})}{2}\right\},\ \ \forall m\geq 1.
		\end{equation}
		Define
		\begin{equation}
		l_1(R) = \frac{2^{m+1}}{\epsilon}l(R_m),\ \ \text{if}\ R\in[R_m,R_{m+1}),\ m=0,1,\cdots.
		\end{equation}
		Clearly we have
		\begin{align}
		&l_1(R_{m}) \leq \frac{2^{m+1}}{\epsilon}\frac{l(R_{m-1})}{2} = \frac{2^{m}}{\epsilon}l(R_{m-1}) = l_1(R_{m-1}),\\
		&l_1(R_{m}) \leq  \frac{2^{m+1}}{\epsilon}\frac{1}{m2^m} \xrightarrow{m\rightarrow\infty} 0.
		\end{align}
		Therefore, $l_1(R)$ decreases to 0 as $R\rightarrow\infty$. For each $R_m$ we have
		\begin{equation}
		\P(L_{X_n|Y_n}(R_m)> l_1(R_m)) \leq \frac{\E\left[ L_{X_n|Y_n}(R_m) \right]}{l_1(R_m)} = \frac{l(R_m)}{l_1(R_m)} = \frac{\epsilon}{2^{m+1}}.
		\end{equation}
		Consequently,
		\begin{equation}\label{L}
		\begin{aligned}
		&\P\left(\exists R\geq 0\ s.t.\ L_{X_n|Y_n}(R)> l(R)\right) \leq \P\left(\bigcup_{m=0}^\infty\{L_{X_n|Y_n}(R_m)> l(R_m)\} \right)\\
		&\leq \sum\limits_{m=1}^\infty \frac{\epsilon}{2^{m+1}} = \frac{\epsilon}{2}.
		\end{aligned}
		\end{equation}
		The desired result follows from \eqref{J} and \eqref{L}.
	\end{proof}
	
	\section{Uniformly integrability of $\{\bar{W}_n,n\ge 1\}$}\label{proof of 2th moment}
\begin{lemma}\label{weak to strong converge}
	Suppose that  $\xi_n$ are \textit{i.i.d.} samples from $\xi$ and $\E \xi=0, \E \xi^2=\sigma^2<\infty$, then the sequence $\{ W^2_n, n\ge 1\}$ is uniformly integrable, where
	$$W_n:=\frac{1}{\sqrt{n}}\sum_{i=1}^n \xi_i.$$
\end{lemma}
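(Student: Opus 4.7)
The plan is to use a truncation argument to reduce the problem to a bounded-variable case where a uniform fourth-moment bound is available, and then handle the tail via Cauchy--Schwarz.

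First, I would fix $M>0$ and split $\xi_i$ as $\xi_i = \tilde\xi_i^{M} + \hat\xi_i^{M}$, where
\[
\tilde\xi_i^{M} = \xi_i \mathbf{1}_{\{|\xi_i|\le M\}} - \E[\xi_i \mathbf{1}_{\{|\xi_i|\le M\}}], \qquad
\hat\xi_i^{M} = \xi_i \mathbf{1}_{\{|\xi_i|> M\}} - \E[\xi_i \mathbf{1}_{\{|\xi_i|> M\}}].
\]
Both are mean-zero and i.i.d.\ in $i$. Setting $\tilde W_n = n^{-1/2}\sum_i \tilde\xi_i^{M}$ and $\hat W_n = n^{-1/2}\sum_i \hat\xi_i^{M}$, we have $W_n = \tilde W_n + \hat W_n$. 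Since $\E\xi_1=0$, one checks $\V(\hat\xi_1^{M}) \le \E[\xi_1^2 \mathbf{1}_{\{|\xi_1|>M\}}] =: \varepsilon(M)$, and $\varepsilon(M)\to 0$ as $M\to\infty$ by dominated convergence. Hence $\E \hat W_n^2 \le \varepsilon(M)$ uniformly in $n$.

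Next, because $|\tilde\xi_i^{M}|\le 2M$, I would expand the fourth moment:
\[
\E \tilde W_n^4 = \frac{1}{n^2}\Bigl[n\,\E(\tilde\xi_1^{M})^4 + 3n(n-1)\bigl(\E(\tilde\xi_1^{M})^2\bigr)^2\Bigr] \le 4M^2\sigma^2 + 3\sigma^4 =: C_M,
\]
uniformly in $n$. Using $W_n^2 \le 2\tilde W_n^2 + 2\hat W_n^2$ and Cauchy--Schwarz, for any $R>0$,
\[
\E\bigl[W_n^2 \mathbf{1}_{\{|W_n|\ge R\}}\bigr] \le 2\sqrt{\E \tilde W_n^4}\,\sqrt{\P(|W_n|\ge R)} + 2\E\hat W_n^2 \le \frac{2\sqrt{C_M}\,\sigma}{R} + 2\varepsilon(M),
\]
where Chebyshev gave $\P(|W_n|\ge R) \le \sigma^2/R^2$.

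Finally, given $\delta>0$, I would first choose $M$ so that $2\varepsilon(M)<\delta/2$, and then $R$ so that $2\sqrt{C_M}\sigma/R <\delta/2$. This proves
\[
\lim_{R\to\infty}\sup_{n\ge 1}\E\bigl[W_n^2 \mathbf{1}_{\{W_n^2\ge R^2\}}\bigr] = 0,
\]
which together with $\sup_n \E W_n^2 = \sigma^2 <\infty$ is exactly the uniform integrability of $\{W_n^2\}$. The main (and only nontrivial) obstacle is controlling the bounded-truncation part, which is resolved by the explicit fourth-moment computation that exploits the i.i.d.\ structure and the truncation bound $|\tilde\xi_1^{M}|\le 2M$; once $\E\tilde W_n^4$ is uniformly bounded, the rest is routine.
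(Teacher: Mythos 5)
Your proof is correct, and it takes a genuinely different route from the paper's. The paper argues softly: by the CLT and the Skorokhod representation theorem one may assume $W_n \to G_\sigma$ almost surely, hence $W_n^2 \to G_\sigma^2$ in probability, and since $\E W_n^2 = \sigma^2 = \E G_\sigma^2$ for every $n$, the standard fact that convergence in probability of nonnegative variables together with convergence of their means forces uniform integrability (a Vitali/Scheff\'e-type argument) finishes the proof in two lines. Your argument, by contrast, never invokes the CLT: you truncate at level $M$, control the truncated part through the explicit fourth-moment identity for i.i.d.\ centered sums (where boundedness $|\tilde\xi_1^M|\le 2M$ gives the uniform-in-$n$ constant $C_M$), control the tail part through its uniformly small variance $\varepsilon(M)$, and combine via $W_n^2 \le 2\tilde W_n^2 + 2\hat W_n^2$, Cauchy--Schwarz and Chebyshev. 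All the individual steps check out (in particular $\V(\hat\xi_1^M)\le\E[\xi_1^2\mathbf{1}_{\{|\xi_1|>M\}}]$ and the bound $\E\tilde W_n^4 \le \tfrac{1}{n}\E(\tilde\xi_1^M)^4 + 3(\E(\tilde\xi_1^M)^2)^2 \le C_M$). What your approach buys is that it is elementary and quantitative: it produces an explicit decreasing function $l(R) = \inf_M\bigl(2\sqrt{C_M}\,\sigma/R + 2\varepsilon(M)\bigr)$ dominating $\sup_n\E[W_n^2\mathbf{1}_{\{|W_n|\ge R\}}]$, which is precisely the form of tail-variance bound the paper needs when it feeds this lemma into condition 3 of the Main Theorem (and it parallels the fourth-moment computation the paper carries out separately in Appendix~\ref{G} under the stronger assumption $\E X^4<\infty$ — your truncation removes that assumption). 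The paper's route is shorter but non-constructive and leans on heavier machinery.
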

\pf By CLT  and Skorohod representation theorem, there exists a Gaussian random variable $G_\sigma\sim\mathcal{N}(0,\sigma^2)$ such that $W_n\overset{a.s.}{\rightarrow}G_\sigma$, and thus
\begin{equation}\label{Pr converg}
  W^2_n\overset{Pr.}{\longrightarrow}G^2_\sigma.
\end{equation}
Since we also have that
$$\E W_n^2=\sigma^2=\E G_\sigma^2,\;\;\forall n\ge 1,$$
we obtain the uniformly integrability of  $\{ W^2_n, n\ge 1\}$.
 \e

\begin{proposition}
	Let $(\xi,\eta)$ be a two-dimensional random variable such that $\E \xi^2 < \infty$, $\bar{W}_n = \frac{\sum\limits_{k=1}^n (\xi_k - \E(\xi_k|\eta_k))}{\sqrt{n}}$, where $(\xi_k,\eta_k)$ are \textit{i.i.d.} samples from $(\xi,\eta)$. Then the sequence $\left\{\bar{W}_n^2,n\geq1\right\}$ is uniformly integrable.
\end{proposition}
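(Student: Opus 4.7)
The plan is to reduce the statement directly to the preceding Lemma \ref{weak to strong converge} by setting $\zeta_k := \xi_k - \E(\xi_k|\eta_k)$ and checking that $\{\zeta_k\}_{k\ge 1}$ is an i.i.d.\ sequence of mean-zero square-integrable random variables, at which point $\bar W_n = \frac{1}{\sqrt n}\sum_{k=1}^n \zeta_k$ fits exactly into the hypothesis of that lemma and the conclusion follows immediately.

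First I would observe that since $(\xi_k,\eta_k)$ are i.i.d.\ copies of $(\xi,\eta)$, the random variables $\zeta_k$ are measurable functions of $(\xi_k,\eta_k)$ of the same form, hence also i.i.d. Next, the tower property gives $\E \zeta_k = \E\xi_k - \E[\E(\xi_k|\eta_k)] = 0$. Finite variance of $\zeta_k$ is the only non-trivial point: by conditional Jensen's inequality, $[\E(\xi_k|\eta_k)]^2 \le \E(\xi_k^2|\eta_k)$, so $\E[(\E(\xi_k|\eta_k))^2] \le \E \xi^2 < \infty$, and therefore
\begin{equation*}
\E \zeta_k^2 \le 2\E \xi_k^2 + 2\E[(\E(\xi_k|\eta_k))^2] \le 4\E \xi^2 < \infty.
\end{equation*}

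Having verified the three hypotheses, I would then simply invoke Lemma \ref{weak to strong converge} applied to the i.i.d.\ sequence $\{\zeta_k\}$: it yields that $\left\{\left(\tfrac{1}{\sqrt n}\sum_{k=1}^n \zeta_k\right)^2, n\ge 1\right\} = \{\bar W_n^2, n\ge 1\}$ is uniformly integrable, which is exactly the statement to be proved.

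There is essentially no obstacle here; the only step that requires even a line of care is the finite variance of $\zeta_k$, which is why the assumption $\E\xi^2<\infty$ is needed (note that no assumption on $\eta$ is used, since we only ever need moments of $\E(\xi|\eta)$, bounded via conditional Jensen). The substantive content of the proof is really carried by Lemma \ref{weak to strong converge}, whose own proof uses the CLT plus Skorohod coupling together with convergence of second moments to $\E G_\sigma^2 = \sigma^2$, so that the Vitali-type criterion (convergence in probability plus matching $L^1$ norms of the squares) delivers uniform integrability.
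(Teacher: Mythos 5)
Your proof is correct and matches the paper's own argument essentially line for line: the paper likewise sets $\bar{\xi}_k := \xi_k - \E(\xi_k|\eta_k)$, checks that these are \textit{i.i.d.}, mean zero, with $\E\bar{\xi}^2 \le 4\E\xi^2 < \infty$ via Cauchy--Schwarz and (conditional) Jensen, and then invokes Lemma \ref{weak to strong converge}. No further comment is needed.
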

\pf It is easy to see that $\bar{\xi}_k:=\xi_k - \E(\xi_k|\eta_k)$ are \textit{i.i.d.} samples from $\bar{\xi}:=\xi-\E(\xi|\eta)$,  and
$$\E\bar{\xi}=0,\; \E\bar{\xi}^2\le 4\E \xi^2<\infty$$
due to Cauchy-Schwartz inequality and Jensen's inequality. By Lemma \ref{weak to strong converge}, the proof is completed.  \e
	
	\section{Moment and Tail Variance Bounds of Independent Sum.}\label{G}
	\begin{proposition}\label{moment and L bound}
		Suppose that $X_1,X_2,...X_n\overset{\textit{i.i.d.}}{\sim} X$, $X$  has density $f(x)\in\mathcal{S}$ and $\E X = 0$. Let $0<\lambda_1,\lambda_2,...,\lambda_n<1$ be such that $\sum\limits_{i=1}^n \lambda_i^2 = 1$. Set $Y = \sum\limits_{i=1}^n \lambda_iX_i$, then
		\begin{enumerate}
			\item $\E Y^2 = \E X^2,\  \E Y^4 \leq \E X^4 + 3(\E X^2)^2$.
			\item $L_Y(R) \leq \min\{\frac{\sqrt{(\E X^4 + 3(\E X^2)^2)(\E X^2)}}{R},\E X^2\}$.
		\end{enumerate}
	\end{proposition}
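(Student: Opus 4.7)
The plan is to handle the two parts separately, with part 2 following easily from part 1 via standard Cauchy--Schwarz and Chebyshev estimates.

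For part 1, I would first note that since the $X_i$ are i.i.d.\ with mean zero and $\sum \lambda_i^2 = 1$, the identity $\E Y^2 = \sum_i \lambda_i^2 \E X^2 = \E X^2$ follows immediately from expansion, because all cross-terms $\E[X_i X_j]$ with $i\neq j$ vanish. For the fourth moment, I would expand
\begin{equation*}
\E Y^4 = \sum_{i_1,i_2,i_3,i_4}\lambda_{i_1}\lambda_{i_2}\lambda_{i_3}\lambda_{i_4}\E[X_{i_1}X_{i_2}X_{i_3}X_{i_4}],
\end{equation*}
and observe that since $\E X = 0$ and the $X_i$ are independent, the only surviving terms are those where the indices match either as all four equal or as two pairs. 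This gives
\begin{equation*}
\E Y^4 = \Bigl(\sum_i \lambda_i^4\Bigr)\E X^4 + 3\Bigl(\sum_{i\neq j}\lambda_i^2\lambda_j^2\Bigr)(\E X^2)^2.
\end{equation*}
Both coefficients are bounded by $1$: indeed $\sum_i \lambda_i^4 \leq (\sum_i \lambda_i^2)^2 = 1$, and $\sum_{i\neq j}\lambda_i^2\lambda_j^2 = 1 - \sum_i\lambda_i^4 \leq 1$. The claimed bound $\E Y^4 \leq \E X^4 + 3(\E X^2)^2$ follows.

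For part 2, I would apply Cauchy--Schwarz together with Chebyshev's inequality:
\begin{equation*}
L_Y(R) = \E[Y^2 \mathbf{1}_{\{|Y|\geq R\}}] \leq \sqrt{\E Y^4}\cdot\sqrt{\P(|Y|\geq R)} \leq \sqrt{\E Y^4}\cdot\frac{\sqrt{\E Y^2}}{R}.
\end{equation*}
Substituting the bounds from part 1 yields $L_Y(R) \leq \sqrt{(\E X^4 + 3(\E X^2)^2)\E X^2}/R$. The alternative bound $L_Y(R) \leq \E Y^2 = \E X^2$ is trivial by dropping the indicator. Taking the minimum completes the proof.

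I do not anticipate any genuine obstacle here; both parts are elementary second- and fourth-moment computations together with classical tail inequalities, and the assumption $\E X = 0$ together with independence does all the work in killing the cross-terms.
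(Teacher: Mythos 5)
Your proposal is correct and follows essentially the same route as the paper: expand the fourth moment using independence and $\E X = 0$ to isolate the $\sum_i\lambda_i^4$ and paired terms, then use Cauchy--Schwarz plus Chebyshev for the tail variance. The only (immaterial) difference is that you bound the two coefficients $\sum_i\lambda_i^4$ and $\sum_{i\neq j}\lambda_i^2\lambda_j^2$ by $1$ separately, while the paper rewrites $\sum_{i\neq j}\lambda_i^2\lambda_j^2=\sum_i\lambda_i^2(1-\lambda_i^2)$ and bounds the combined expression by $(\E X^4+3(\E X^2)^2)\sum_i\lambda_i^2$.
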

	\begin{proof}  1. $\E Y^2 = \E\left[\sum\limits_{i=1}^n \lambda_i X_i\right]^2 = \E X^2\sum\limits_{i=1}^n \lambda_i^2 = \E X^2$.
		\begin{align*}
		&\E Y^4  = \E\left[\sum\limits_{i=1}^{n} \lambda_iX_i\right]^4 =\E X^4\sum\limits_{i=1}^{n} \lambda_i^4 +3 (\E X^2)^2 \sum\limits_{i\neq j}\lambda_i^2\lambda_j^2 \\
		& = \E X^4\sum\limits_{i=1}^{n} \lambda_i^4 +3 (\E X^2)^2 \sum\limits_{i=1}^{n} \lambda_i^2(1-\lambda_i^2)\leq (\E X^4+3(\E X^2)^2)\sum\limits_{i=1}^{n} \lambda_i^2 \\
		&= \E X^4+3(\E X^2)^2.
		\end{align*}
		
		2.
		\begin{align*}
		L_Y(R) & = \E[Y^2\mathbf{1}_{\{|Y|\geq R\}}] \leq\sqrt{ \P(|Y|\geq R)\E Y^4} \leq \sqrt{(\E X^4 + 3(\E X^2)^2) \frac{\E Y^2}{R^2}}\\
		& = \frac{\sqrt{(\E X^4 + 3(\E X^2)^2)(\E X^2)}}{R}.
		\end{align*}
		Thus $L_Y(R)\leq \min\{L_Y(0),\frac{\sqrt{(\E X^4 + 3(\E X^2)^2)(\E X^2)}}{R}\} = \min\{\E X^2,\frac{\sqrt{(\E X^4 + 3(\E X^2)^2)(\E X^2)}}{R}\}$.
	\end{proof}

\section{An Example for Theorem \ref{V est}}\label{a ex}
Given a distribution $A$, let $h_0:=h(A)$ and $J_0:=J(A)$. We assume that the density function $p$ of $A$ is continuous and differentiable, and there exists $x_1< x_0<x_2$ \textit{s.t.}
\begin{equation*}
p(x_1)>0,\;\; p(x_0)=0,\;\; p'(x_0)=0,\;\; p(x_2)>0.
\end{equation*}
We define $A_i:=\frac{1}{i} A$, then it follows that $\V(A_i)\rightarrow 0$ as $i\to \infty$. We further define a random variable $B(i,j)$ using the function
\begin{equation*}
q_{i,j}(x):=\begin{cases}
p_i(x+j), & x\le x_0-j;\\
0, & x_0-j<x<x_0;\\
p_i(x), &x\ge x_0,
\end{cases}
\end{equation*}
as its density, where $p_i$ is the density function of $A_i$. It is not difficult to observe that for any $i$,
\begin{equation*}
\V(B(i,j))\rightarrow\infty \; \text{as} \; j\to \infty
\end{equation*}
and
\begin{equation*}
h(B(i,j))=h(A_i)=h_0-\log(i);\; J(B(i,j))=J(A_i)=i^2J_0.
\end{equation*}
Now, for any $\varepsilon, \varepsilon'>0$, we can find large $i$  such that
\begin{equation}\label{1}
\dfrac{h(B(i,j))-\frac{1}{2}\log [2\pi e(1/J(B(i,j)))]}{\frac{1}{2}\log [2\pi e\varepsilon]- \frac{1}{2}\log [2\pi e(1/J(B(i,j)))]}=\dfrac{h_0-\frac{1}{2}\log [2\pi e(1/J_0)]}{\frac{1}{2}\log (\varepsilon i^2J_0)}\le \frac{1}{2}\varepsilon'.
\end{equation}
Note that \eqref{1} holds uniformly  about $j$.
Finally, we note that for $B(\tau):=B(i,j)+a_\tau,\;\; a_\tau\sim N(0,\tau),$ the entropy, the Fisher information and the variance  are all continuous (differentiable) with respect to $\tau$, therefore, we can find a sufficiently small $\tau$ such that when using $B=B(\tau)$ as input, the Hadamard transformation effectively reduces the large variance to be almost smaller than $\varepsilon$, \textit{i.e.}
\begin{equation*}
\P_\Omega(V_\infty(\omega)\le \varepsilon)\ge 1-\varepsilon'.
\end{equation*}

  \end{appendix}
\begin{acks}[Acknowledgments]  The authors are very grateful to Qi-Man Shao and Yuan Li for their stimulating discussions. The  valuable suggestions of Qi-Man Shao greatly improves this research. Yuan Li helped us a lot for revising the earlier version of this paper.

\end{acks}
\begin{funding}
The research is supported by National Key R\&D Program of China No. 2023YFA1009603.

\end{funding}


\end{document}